\documentclass{lmcs}
\pdfoutput=1

% LMCS Layouting Macros
\usepackage{lastpage}
\lmcsdoi{15}{1}{9}
\lmcsheading{}{\pageref{LastPage}}{}{}%
{Mar.~28,~2018}{Feb.~05,~2019}{}

%% mandatory lists of keywords 
\keywords{proof theory, combinatorics, associativity, Tamari lattice, coherence theorem}

%% read in additional TeX-packages or personal macros here:

\usepackage[noadjust]{cite}
\usepackage[utf8]{inputenc} 

\usepackage{graphicx}

\usepackage{amsmath}
\usepackage{amsthm}
\usepackage{amsfonts,amssymb}
\usepackage{array}
\usepackage{proof}

\usepackage[all]{xy}

\usepackage{url}
\usepackage{hyperref}

\usepackage{graphicx}
\usepackage{tikz}
\usepackage{tikz-cd}
\usetikzlibrary{arrows}
\usetikzlibrary{calc}
\usetikzlibrary{trees}
\usetikzlibrary{shapes}
\usetikzlibrary{decorations.markings}
\usetikzlibrary{positioning}

\tikzset{
    fromroot/.style={draw,postaction={decorate},
        decoration={markings,mark=at position .55 with {\arrow{>}}}},
    toroot/.style={draw,postaction={decorate},
        decoration={markings,mark=at position .55 with {\arrow{<}}}},
}

\definecolor{lightblue}{RGB}{173,216,230}
\definecolor{indianred}{RGB}{205,92,92}

\tikzstyle{var}=[circle,fill=black,draw=black,scale=0.1]
\tikzstyle{lam}=[circle,fill=lightblue,draw=black,scale=0.6]
\tikzstyle{app}=[circle,fill=indianred,draw=black,scale=0.6]

% correct bad hyphenation here
\hyphenation{op-tical net-works semi-conduc-tor}

%\begin{macros}

\newcommand*{\imgcenter}[1]{\begingroup\setbox0=\hbox{#1}\parbox{\wd0}{\box0}\endgroup}
\newcommand\defn[1]{{\bf #1}}%{\uline{#1}}

\newcommand\tseq[2]{#1 \longrightarrow #2}
\newcommand\fseq[2]{#1 \Longrightarrow #2}

\newcommand{\medblackbullet}{\,\begin{picture}(-1,1)(-1,-3)\circle*{3}\end{picture}\ }
\newcommand{\medwhitebullet}{\,\begin{picture}(-1,1)(-1,-3)\circle{3}\end{picture}\ }
\newcommand\mul{\mathbin{\medblackbullet}}
\newcommand\mulop{\mathbin{\medwhitebullet}}
\newcommand\mulrule{\mul}%{\bullet}
%{\circle}

\newcommand\Catalan[1]{C_{#1}}
\newcommand\Int[1]{\mathcal{I}_{#1}}
%{\mathcal{I}^{\mathsf{F}}_{#1}}
%{\mathcal{I}^{\mathsf{F}}_{#1}}

\newcommand\atm{\mathrm{atm}}
\newcommand\foc{\mathrm{foc}}

\newcommand\set[1]{\left\{\,#1\,\right\}}

\newcommand\toFsym{\phi}
\newcommand\toF[1]{\phi[#1]}
\newcommand\actF[2]{{#1}\circledast #2}
\newcommand\deriv[1]{\mathcal{#1}}
\newcommand\toCsym{\psi}
\newcommand\toC[1]{\psi[#1]}%{\Gamma^\irr_{#1}}

\newcommand\orig{\mathrm{amb}}

\newcommand\sz[1]{\#(#1)}%{\mathrm{s}(#1)}
\newcommand\frontier[1]{\partial #1}%{\mathrm{\mathrm{fr}}(#1)}

\newcommand\irr{{\mathrm{irr}}}

\newcommand\Frm{\mathsf{Y}}%{\mathrm{Formula}}
\newcommand\Ctx{\mathsf{F}(\mathsf{Y})}%{\mathrm{Context}}

\newcommand\Tam[1]{\Frm_{#1}}%\mathsf{Y}_{#1}}
\newcommand\TamC[1]{\Ctx_{[#1]}}%\mathsf{F}_{(#1)}}

\newcommand\Comp[1]{\mathrm{O}_{[#1]}}
\newcommand\CompC[1]{\mathrm{O}_{#1}}

%\end{macros}

\begin{document}

\title{A sequent calculus for a semi-associative law}
\titlecomment{This article is an extended version of a paper presented at FSCD 2017 \cite{Z2017assoc}.
The most significant difference is the inclusion of the proof for the lattice property via the coherence theorem, which was raised as an open problem in the FSCD version.
Various aspects of the presentation are also improved, with additional discussion of recent related work as well as some historical remarks.
}

\author[N.~Zeilberger]{Noam Zeilberger}
\address{
School of Computer Science\\
University of Birmingham\\
%Edgbaston, Birmingham B15 2TT,
UK}
\email{zeilbern@cs.bham.ac.uk}

\begin{abstract}
We introduce a sequent calculus with a simple restriction of Lambek's product rules that precisely captures the classical Tamari order, i.e., the partial order on fully-bracketed words (equivalently, binary trees) induced by a semi-associative law (equivalently, right rotation).
We establish a focusing property for this sequent calculus (a strengthening of cut-elimination), which yields the following coherence theorem: every valid entailment in the Tamari order has exactly one focused derivation.
We then describe two main applications of the coherence theorem, including: 1. A new proof of the lattice property for the Tamari order, and 2. A new proof of the Tutte--Chapoton formula for the number of intervals in the Tamari lattice $\Tam{n}$.
\end{abstract}

\maketitle

% no keywords

\section*{Introduction}
\label{sec:intro}

\subsection{The Tamari order, Tamari lattices and associahedra}
\label{sec:intro:intro}

Suppose you are given a pair of binary trees $A$ and $B$ and the following problem:
\emph{transform $A$ into $B$ using only right rotations.}
Recall that a right rotation is an operation acting locally on a pair of internal nodes of a binary tree, rearranging them like so:
$$
\imgcenter{\includegraphics[width=1.5cm]{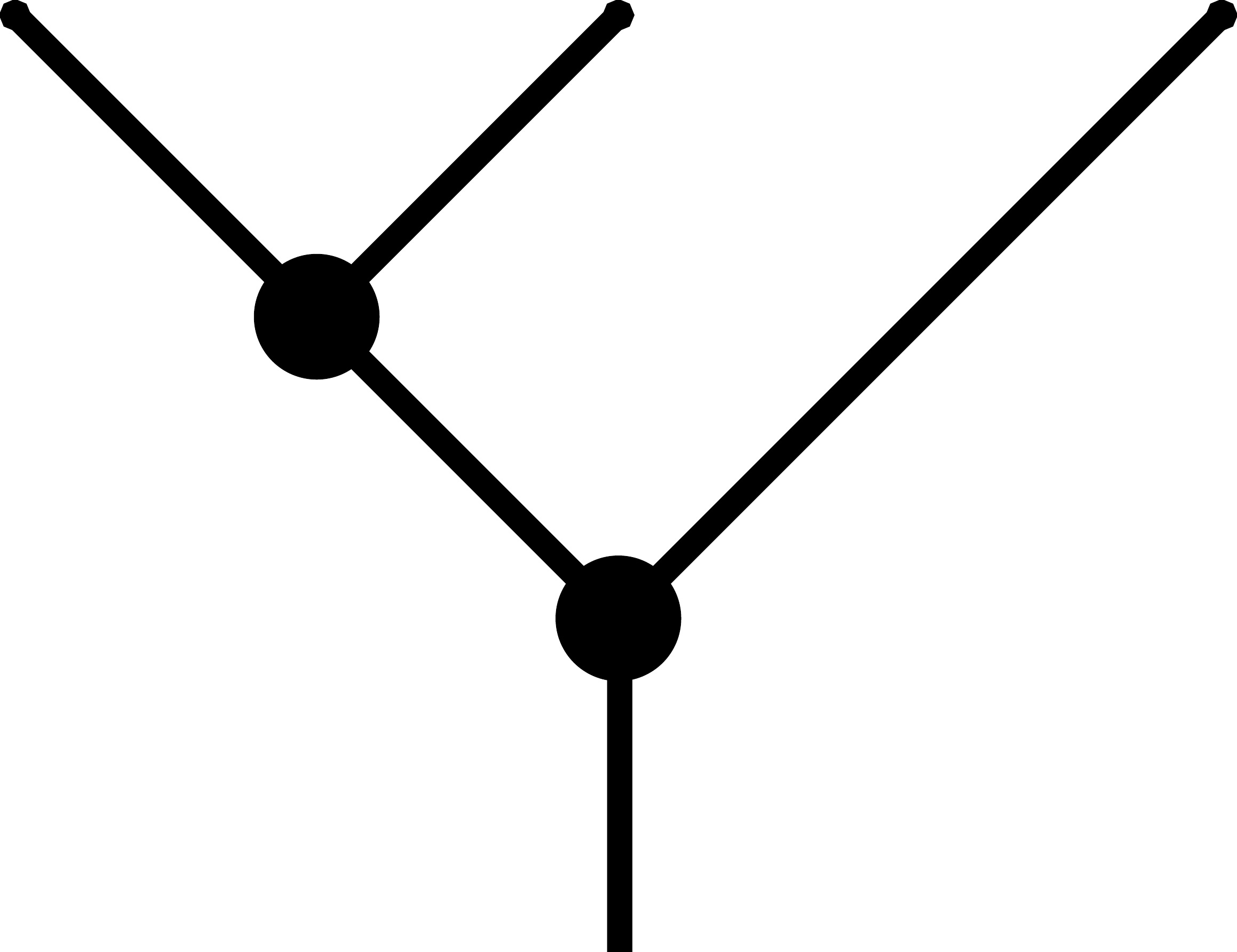}}
 \quad\longrightarrow\quad
\imgcenter{\includegraphics[width=1.5cm]{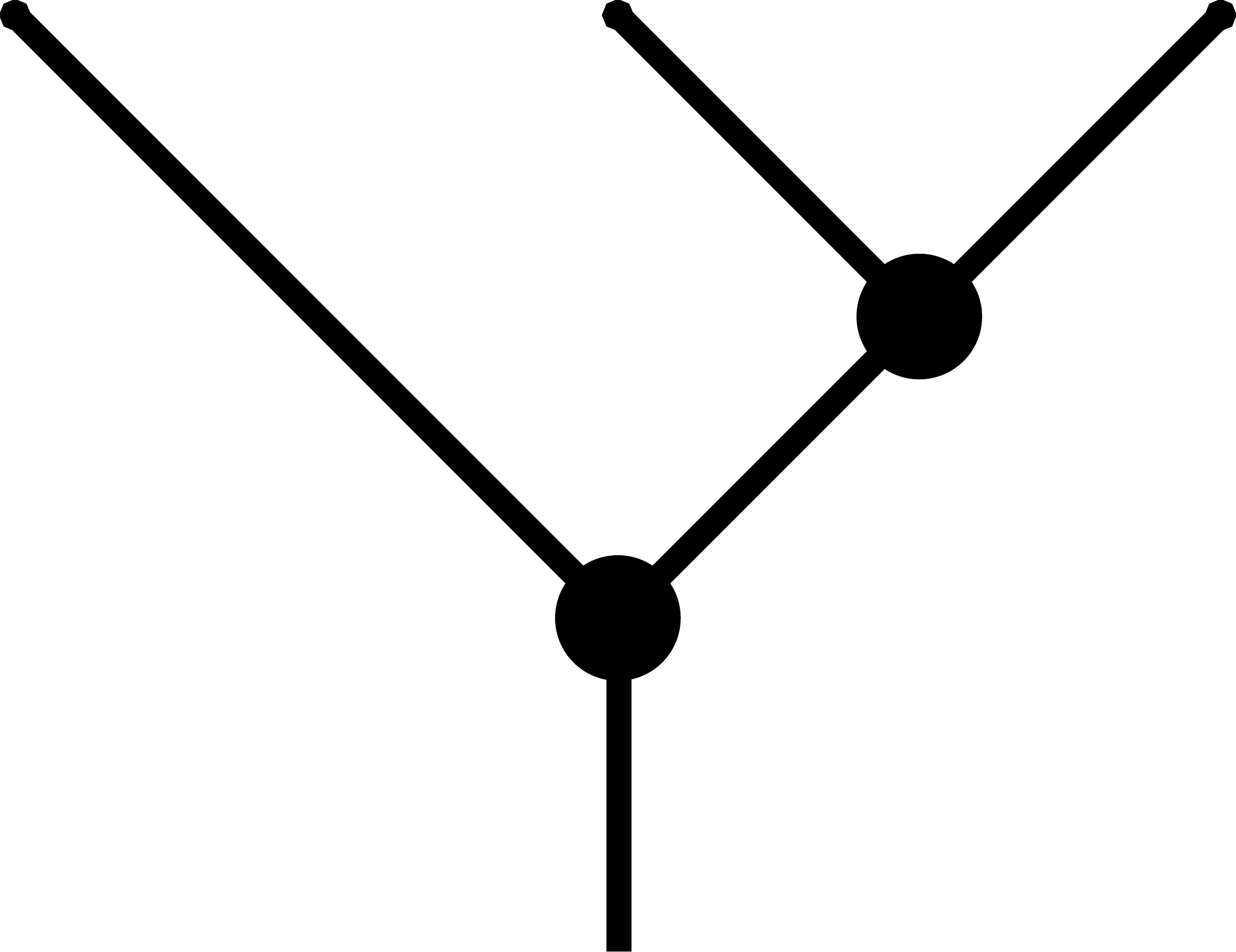}}
$$
Solving this problem amounts to showing that $A \le B$ in the \emph{Tamari order}.
Originally introduced by Dov Tamari for motivations in algebra \cite{Tamari1951phd}, this is the ordering on well-formed monomials (or ``fully-bracketed words'') induced by postulating that the multiplication operation $A\mul B$ obeys a \emph{semi-associative law}\footnote{Clearly, one has to make an arbitrary choice in orienting associativity from left-to-right or right-to-left.
The literature is inconsistent about this, but since the two possible orders are strictly dual it does not make much difference. \label{foot1}}
$$
(A\mul B)\mul C \le A\mul (B\mul C)
$$
and is monotonic in each argument:
$$
\infer{A_1\mul B_1 \le A_2\mul B_2}{A_1 \le A_2 & B_1 \le B_2}
$$
For example, the word $(p \mul  (q\mul r))\mul s$ is below the word $p \mul  (q \mul  (r \mul  s))$ in the Tamari order:
$$
\imgcenter{\includegraphics[width=1.5cm]{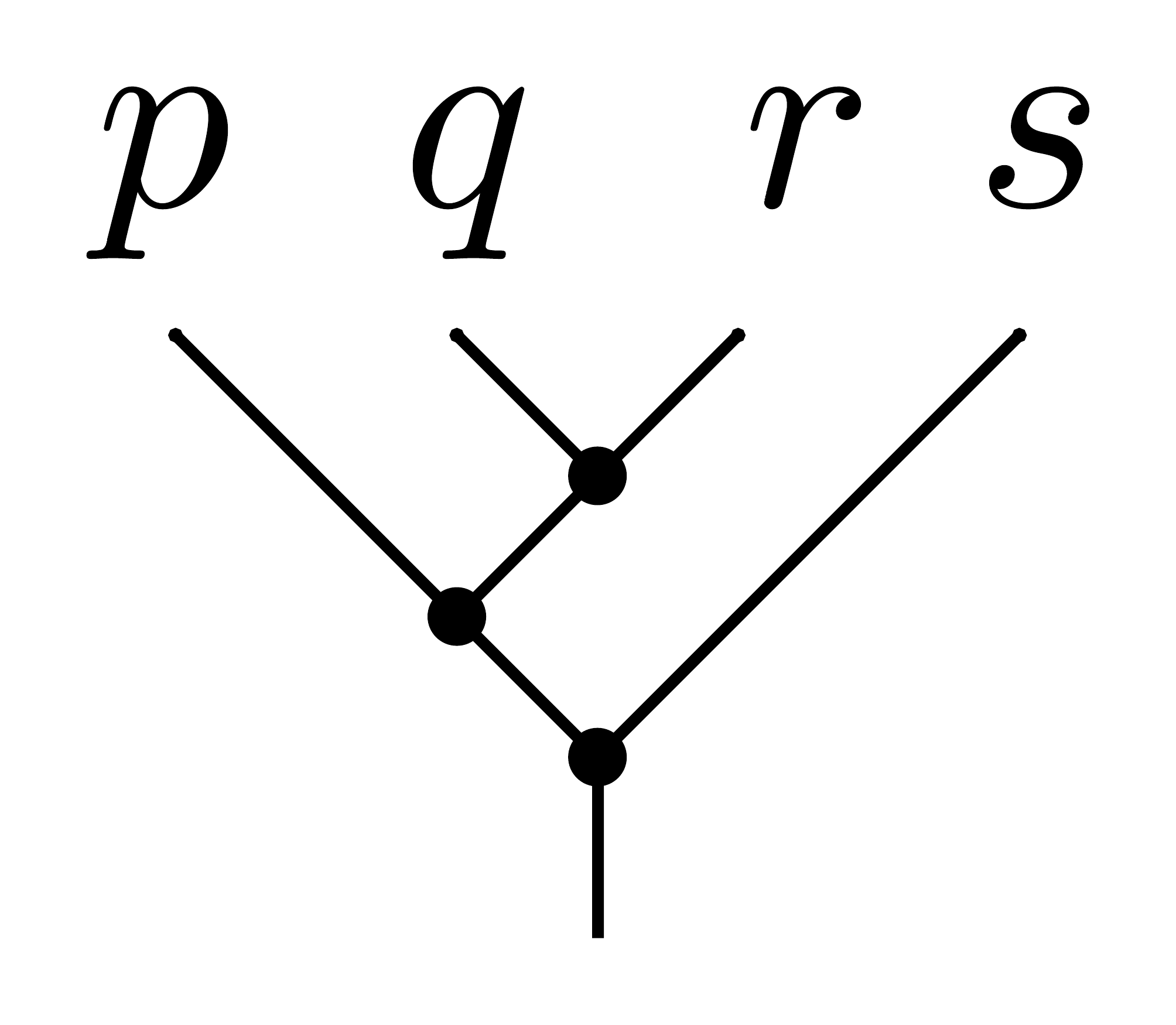}}
 \quad\longrightarrow\quad
\imgcenter{\includegraphics[width=1.5cm]{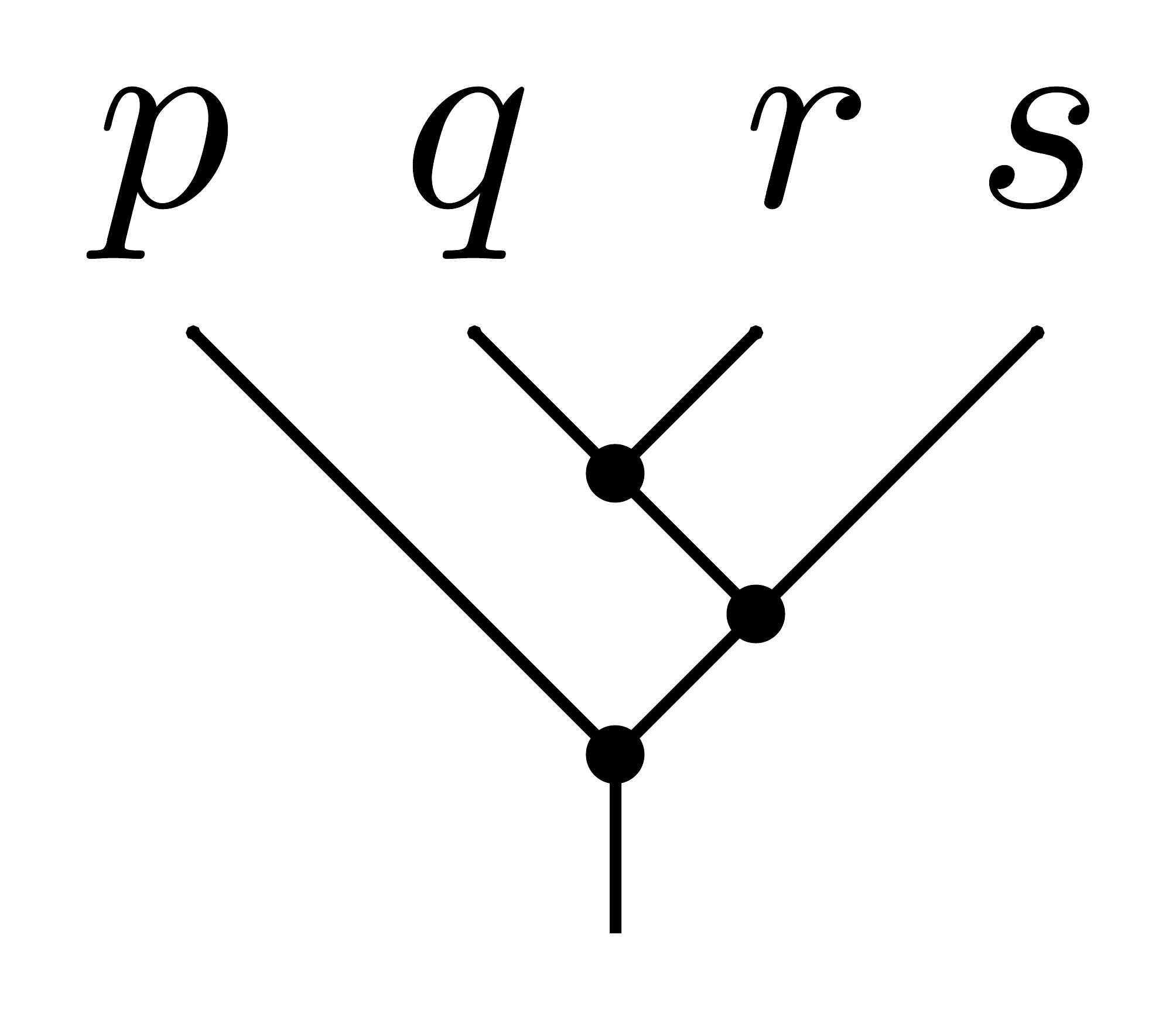}}
 \quad\longrightarrow\quad
\imgcenter{\includegraphics[width=1.5cm]{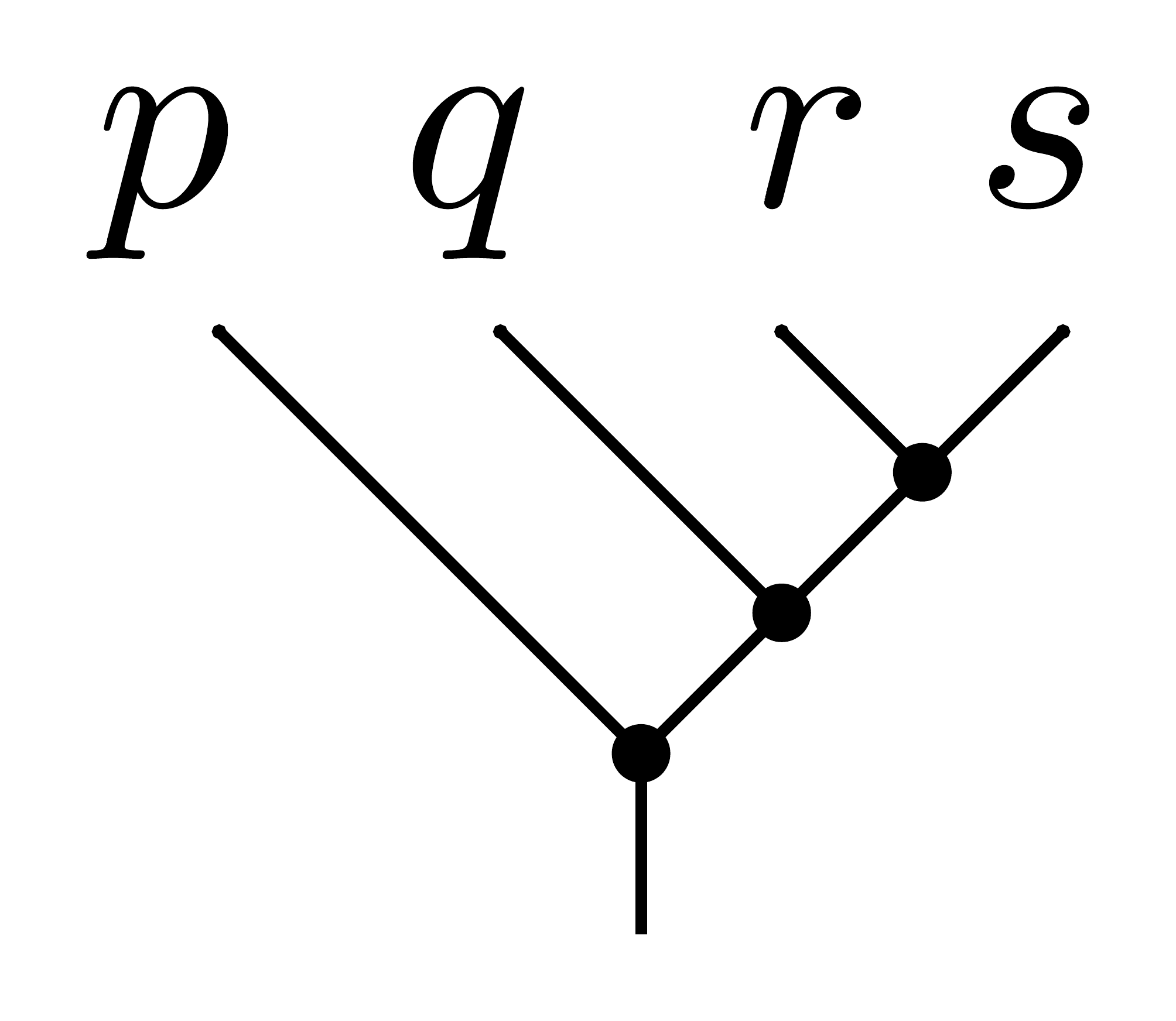}}
$$
The letters $p,q,\dots$ are just placeholders and what really matters is the underlying shape of the bracketings, which is what justifies the above description in terms of unlabelled binary trees.
Since such trees are enumerated by the ubiquitous Catalan numbers (there are $\Catalan{n} = \binom{2n}{n}/(n+1)$ distinct binary trees with $n$ internal nodes), which also count many other isomorphic families of objects, the Tamari order has many other equivalent formulations as well, such as on strings of balanced parentheses \cite{HuTa72}, triangulations of a polygon \cite{STT88,LackStreet2014},  or Dyck paths \cite{BeBo2009}.

For any fixed natural number $n$, the $\Catalan{n}$ unlabelled Catalan objects of that size form a lattice under the Tamari order, which is called the \emph{Tamari lattice} $\Tam{n}$.
For example, the left half of Figure~\ref{fig:tam3tam4} shows the Hasse diagram of $\Tam{3}$, which has the shape of a pentagon (readers familiar with category theory may recognize this as ``Mac~Lane's pentagon'' \cite{MacLane1963}).
\begin{figure}
\begin{center}
\includegraphics[width=6.5cm]{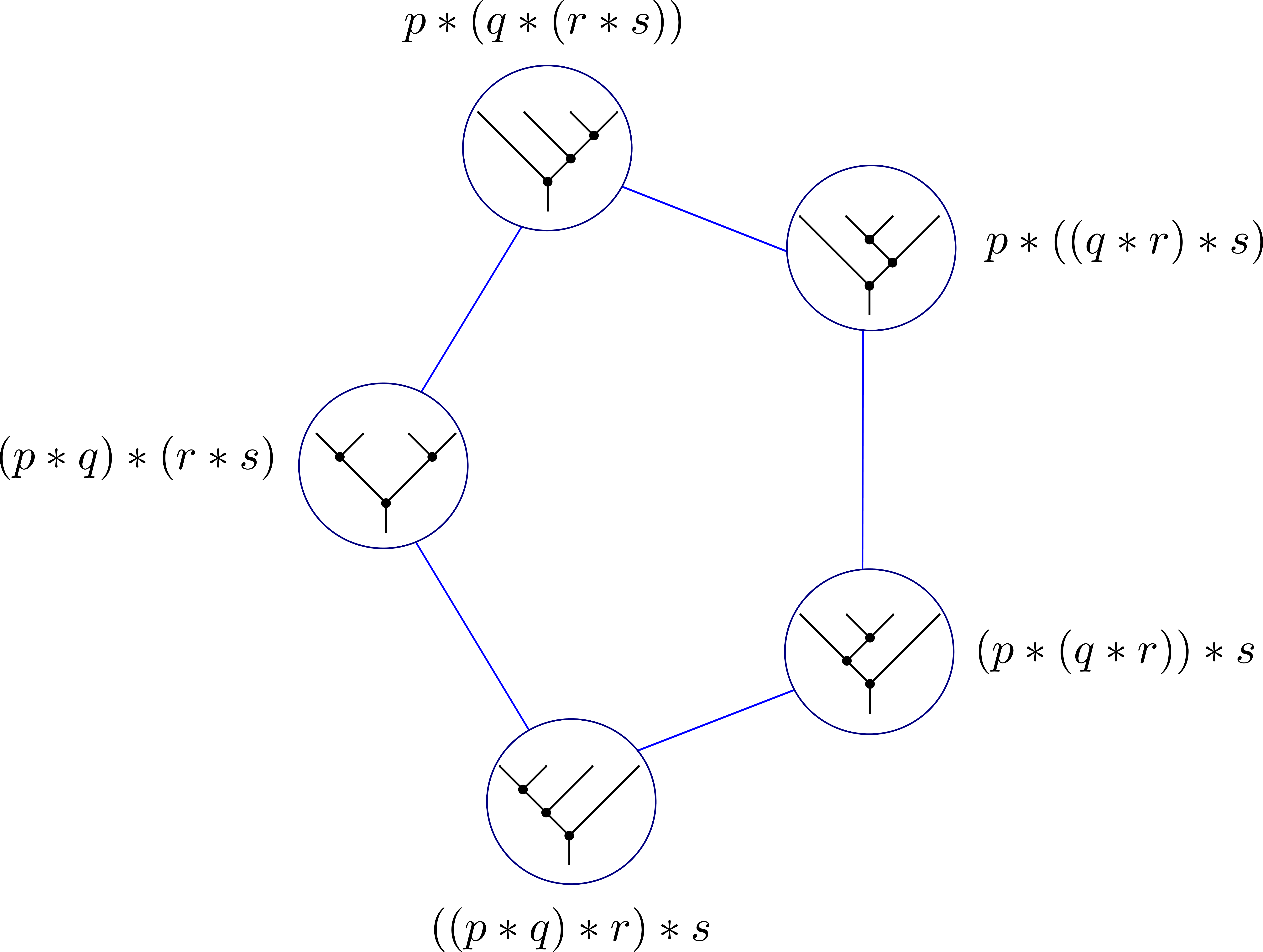}
\quad\vrule\quad
\includegraphics[width=6.5cm,angle=5]{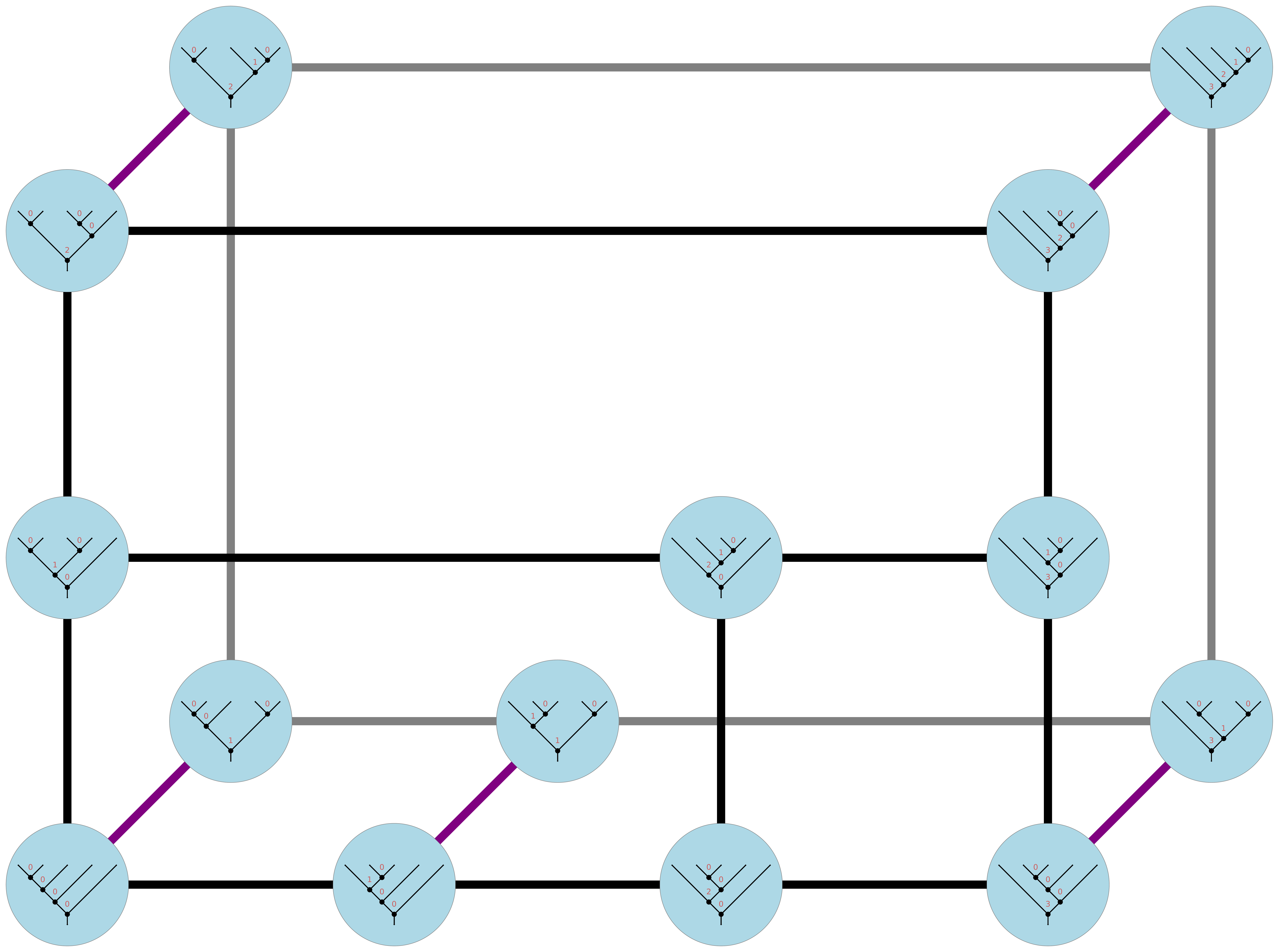} 
\caption{Hasse diagrams of the Tamari lattices $\Tam{3}$ and $\Tam{4}$.
The $\Tam{n}$ may be realized as convex polytopes: on the right is  a particular realization (due to Knuth \cite{Knuth1993TamariTalk}) that indicates the construction of meets, and hence the lattice structure of $\Tam{4}$.
}
\label{fig:tam3tam4}
\end{center}
\end{figure}
More generally, a fascinating property of the Tamari order is that each lattice $\Tam{n}$ generates via its Hasse diagram the underlying graph of an $(n-1)$-dimensional polytope known as an \emph{associahedron} \cite{Tamari1951phd,Stasheff1963,TamariFestschrift,CaspardSantocanaleWehrung2016}.

\subsection{A Lambekian analysis of the Tamari order}
\label{sec:intro:lambek}

We will introduce and study a surprisingly elementary presentation of the Tamari order as a \emph{sequent calculus} in the spirit of Lambek \cite{Lambek1958,Lambek1961}.
The calculus consists of just one \emph{left rule} and one \emph{right rule:}
$$
\infer[\mulrule L]{\tseq{A\mul B,\Delta}{C}}{\tseq{A,B,\Delta}{C}}
\qquad
\infer[\mulrule R]{\tseq{\Gamma,\Delta}{A\mul B}}{\tseq{\Gamma}{A} & \tseq{\Delta}{B}}
$$
together with two \emph{structural rules:}
$$
\infer[id]{\tseq{A}{A}}{}
\qquad
\infer[cut]{\tseq{\Gamma,\Theta,\Delta}{B}}{\tseq{\Theta}{A} & \tseq{\Gamma,A,\Delta}{B}}
$$
Here, the letters $A$, $B$, and $C$ again range over fully-bracketed words, although we will henceforth refer to them as logical ``formulas'', while the letters $\Gamma$, $\Delta$, and $\Theta$ range over lists of formulas called \emph{contexts}.

In fact, all of these rules come straight from Lambek \cite{Lambek1958}, except for the $\mulrule L$ rule which is a restriction of his left rule for products.
Lambek's original rule looked like this:
$$\infer[\mulrule L^\orig]{\tseq{\Gamma,A\mul B,\Delta}{C}}{\tseq{\Gamma,A,B,\Delta}{C}}$$
That is, Lambek's left rule allowed the formula $A\mul B$ to appear anywhere in the context, whereas our more restrictive rule $\mulrule L$ requires the formula to appear at the leftmost end of the context.
It turns out that this simple variation makes all the difference for capturing the Tamari order!

For example, here is a sequent derivation of the entailment
$(p \mul  (q\mul r))\mul s \le p \mul  (q \mul  (r \mul  s))$
(we write $L$ and $R$ as short for $\mulrule L$ and $\mulrule R$, and don't bother labelling instances of $id$):
$$
\infer[L]{\tseq{(p \mul  (q\mul r))\mul s}{p \mul  (q \mul  (r \mul  s))}}{
\infer[L]{\tseq{p \mul  (q\mul r),s}{p \mul  (q \mul  (r \mul  s))}}{
\infer[R]{\tseq{p, q\mul r,s}{p \mul  (q \mul  (r \mul  s))}}{
  \infer{\tseq{p}{p}}{} & 
  \infer[L]{\tseq{q\mul r,s}{q\mul (r\mul s)}}{
  \infer[R]{\tseq{q,r,s}{q\mul (r\mul s)}}{
    \infer{\tseq{q}{q}}{} & 
    \infer[R]{\tseq{r,s}{r\mul s}}{
        \infer{\tseq{r}{r}}{} &
        \infer{\tseq{s}{s}}{}}}}}}}
$$
If we had full access to Lambek's original rule then we could also derive the converse entailment (which is false for Tamari):
$$
\infer[L]{\tseq{p \mul  (q \mul  (r \mul  s))}{(p \mul  (q\mul r))\mul s}}{
\infer[L^\orig]{\tseq{p, q \mul  (r \mul  s)}{(p \mul  (q\mul r))\mul s}}{
\infer[L^\orig]{\tseq{p, q, r \mul  s}{(p \mul  (q\mul r))\mul s}}{
\infer[R]{\tseq{p, q, r,s}{(p \mul  (q\mul r))\mul s}}{
  \infer[R]{\tseq{p, q,r}{p \mul  (q\mul r)}}{
   \infer{\tseq{p}{p}}{} & 
    \infer[R]{\tseq{q,r}{q\mul r}}{
        \infer{\tseq{q}{q}}{} & 
        \infer{\tseq{r}{r}}{}
    }} &
  \infer{\tseq{s}{s}}{}
}}}}
$$
But with the more restrictive rule we can't -- the following soundness and completeness result will be established below.
\begin{clm}
\label{claim:soundcomplete}
$\tseq{A}{B}$ is derivable using the rules $\mulrule L$, $\mulrule R$, $id$, and $cut$ if and only if $A \le B$ holds in the Tamari order.
\end{clm}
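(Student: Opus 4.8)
The plan is to prove the two implications separately. The right-to-left direction (completeness) is essentially immediate from the inductive definition of the Tamari order, whereas the left-to-right direction (soundness) requires first generalizing the statement to arbitrary contexts.

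For \emph{completeness}, recall that $\tamle$ is by definition the least reflexive and transitive relation on formulas that contains every instance of the semi-associative law $(A\mul B)\mul C\tamle A\mul(B\mul C)$ and is closed under the monotonicity rule. It therefore suffices to verify that the relation $R=\set{(A,B):\tseq{A}{B}\text{ is derivable}}$ enjoys these same four closure properties. Reflexivity of $R$ is the rule $id$, and transitivity is an instance of $cut$ with both side-contexts taken to be empty. For monotonicity, given derivations of $\tseq{A_1}{A_2}$ and $\tseq{B_1}{B_2}$, a derivation of $\tseq{A_1\mul B_1}{A_2\mul B_2}$ is obtained by applying $\mulrule L$ and then $\mulrule R$ to split the two-formula context $A_1,B_1$. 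Finally $\tseq{(A\mul B)\mul C}{A\mul(B\mul C)}$ is derived by two uses of $\mulrule L$, two uses of $\mulrule R$, and instances of $id$. This shows ${\tamle}\subseteq R$.

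For \emph{soundness}, I would interpret a context $\Gamma=A_1,\dots,A_n$ by its left-bracketed product $\bigmul{\Gamma}\defeq((\cdots(A_1\mul A_2)\mul A_3)\cdots)\mul A_n$ and prove, by induction on derivations, that derivability of $\tseq{\Gamma}{C}$ implies $\bigmul{\Gamma}\tamle C$; the claim then follows by specializing to $\Gamma=A$, since $\bigmul{A}=A$. The one genuinely non-trivial ingredient is the inequality $\bigmul{\Gamma,\Delta}\tamle\bigmul{\Gamma}\mul\bigmul{\Delta}$, proved by a straightforward induction on $\Delta$ in which each step is a single use of semi-associativity together with monotonicity. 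Granting this, the inductive cases are routine: $id$ is reflexivity of $\tamle$; the $\mulrule L$ case is immediate because $\bigmul{A\mul B,\Delta}$ and $\bigmul{A,B,\Delta}$ are literally the same formula; and the $\mulrule R$ and $cut$ cases combine the displayed inequality with monotonicity and transitivity of $\tamle$.

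The only real subtlety is thus that soundness cannot be proved by a naive induction confined to single-formula sequents: one must strengthen the induction hypothesis to all contexts and isolate the lemma $\bigmul{\Gamma,\Delta}\tamle\bigmul{\Gamma}\mul\bigmul{\Delta}$, which is exactly where the left-to-right orientation of semi-associativity gets used; everything else is bookkeeping (including the minor case analysis on empty side-contexts in the $cut$ case, and, if empty contexts are permitted at all, either excluding them or fixing a unit convention for $\bigmul{\cdot}$).
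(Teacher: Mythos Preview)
Your proposal is correct and follows essentially the same approach as the paper: completeness by verifying that derivability is reflexive, transitive, monotone, and contains the semi-associative law; soundness by interpreting contexts as left-associated products and inducting on derivations, with the key lemma being the ``oplaxity'' inequality $\bigmul{\Gamma,\Delta}\tamle\bigmul{\Gamma}\mul\bigmul{\Delta}$. The one point worth making explicit is that the $cut$ case needs a touch more than oplaxity plus monotonicity of $\mul$: you need $\bigmul{\Gamma,\Theta,\Delta}\tamle\bigmul{\Gamma,A,\Delta}$, which follows from an easy induction on $\Delta$ (equivalently, monotonicity of the right action $X\mapsto\bigmul{X,\Delta}$) together with $\bigmul{\Gamma,\Theta}\tamle\bigmul{\Gamma}\mul A=\bigmul{\Gamma,A}$; the paper isolates this as a separate lemma, but it is indeed the sort of bookkeeping you describe.
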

\noindent
As Lambek emphasized, the real power of a sequent calculus comes when it is combined with Gentzen's \emph{cut-elimination} procedure \cite{Gentzen35}.
We will prove the following somewhat stronger form of cut-elimination:
\begin{clm}
\label{claim:focusing}
If $\tseq{\Gamma}{A}$ is derivable using the rules $\mulrule L$, $\mulrule R$, $id$, and $cut$, then it has a derivation using only $\mulrule L$ together with the following restricted forms of $\mulrule R$ and $id$: % (here $x$ stands for an arbitrary atom):
$$
\infer[\mulrule R^\foc]{\tseq{\Gamma^\irr,\Delta}{A\mul B}}{\tseq{\Gamma^\irr}{A} & \tseq{\Delta}{B}}
\qquad
\infer[id^\atm]{\tseq{p}{p}}{}
$$
where $\Gamma^\irr$ ranges over contexts that don't have a product $C\mul D$ at their leftmost end.
\end{clm}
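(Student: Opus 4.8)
The plan is to prove the focusing property by showing that each rule of the full calculus is \emph{admissible} in the restricted system. Writing $\mathsf{D}^{\foc}$ for the subsystem whose only rules are $\mulrule L$, $\mulrule R^{\foc}$ and $id^{\atm}$, I will show that each of $\mulrule L$, $\mulrule R$, $id$, $cut$ is admissible in $\mathsf{D}^{\foc}$; since $\mulrule L$ already belongs to $\mathsf{D}^{\foc}$ this reduces to three lemmas, proved in the order below so that each may invoke its predecessors. One preliminary observation, a one-line induction on derivations, is used repeatedly: no derivable sequent has an empty context, so in particular both premises of an $\mulrule R^{\foc}$ have nonempty context.

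First, \emph{admissibility of $\mulrule R$}: if $\tseq{\Gamma}{A}$ and $\tseq{\Delta}{B}$ are $\mathsf{D}^{\foc}$-derivable then so is $\tseq{\Gamma,\Delta}{A\mul B}$, by induction on the focused derivation of $\tseq{\Gamma}{A}$. If $\Gamma$ does not begin with a product, apply $\mulrule R^{\foc}$. Otherwise $\Gamma = C\mul D,\Gamma'$, and since neither $\mulrule R^{\foc}$ nor $id^{\atm}$ can conclude a sequent whose context begins with a product, the derivation ends in $\mulrule L$ with premise $\tseq{C,D,\Gamma'}{A}$; apply the induction hypothesis to get $\tseq{C,D,\Gamma',\Delta}{A\mul B}$ and then $\mulrule L$. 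Second, \emph{admissibility of $id$}: $\tseq{A}{A}$ is $\mathsf{D}^{\foc}$-derivable for every $A$, by induction on $A$ --- an atom by $id^{\atm}$, and $A_1\mul A_2$ from $\tseq{A_1}{A_1}$ and $\tseq{A_2}{A_2}$ by admissibility of $\mulrule R$ followed by one $\mulrule L$.

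The heart of the argument is \emph{admissibility of $cut$}: if $\tseq{\Theta}{A}$ and $\tseq{\Gamma,A,\Delta}{B}$ are $\mathsf{D}^{\foc}$-derivable then so is $\tseq{\Gamma,\Theta,\Delta}{B}$. I would prove this by a lexicographic induction on the triple (size of the cut formula $A$; the right-premise derivation; the left-premise derivation), organizing the case analysis around the last rule of the right premise. If it is $id^{\atm}$, the conclusion is one of the premises. If it is $\mulrule R^{\foc}$, or an $\mulrule L$ acting on a product at the head of $\Gamma$ (which forces $\Gamma$ nonempty), the cut permutes up into the appropriate premise by the induction hypothesis and the result is reassembled --- in the $\mulrule R^{\foc}$ subcases via admissibility of $\mulrule R$, which is genuinely needed since permuting the cut into the left branch of an $\mulrule R^{\foc}$ can turn an irreducible context into one that begins with a product. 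The one substantive case is when the cut formula is principal on the right, i.e.\ the right premise ends in $\mulrule L$ acting on $A$ itself; this forces $\Gamma$ empty and $A = C\mul D$, and here one inspects the last rule of the \emph{left} premise $\tseq{\Theta}{C\mul D}$: if $\mulrule R^{\foc}$, splitting $\Theta=\Theta_1,\Theta_2$ with $\tseq{\Theta_1}{C}$ and $\tseq{\Theta_2}{D}$, replace the cut by two cuts on the strictly smaller formulas $D$ then $C$; if $\mulrule L$, permute that $\mulrule L$ below the cut, which is legal precisely because $\Gamma$ is empty so the displaced product remains leftmost; it cannot be $id^{\atm}$ since $C\mul D$ is not an atom.

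With the three admissibility lemmas in hand, Claim~\ref{claim:focusing} follows by induction on a derivation in the full calculus, replacing each rule instance by its $\mathsf{D}^{\foc}$-admissible counterpart. The main obstacle will be admissibility of $cut$: the restriction of $\mulrule L$ to the leftmost position is exactly what blocks the naive ``permute $\mulrule L$ past $cut$'' move whenever the displaced product would cease to be leftmost --- which is why the recursion must be driven by the right premise, consulting the left premise only in the principal case --- and the restriction to $\mulrule R^{\foc}$ is what forces the detour through admissibility of $\mulrule R$; arranging all of this under a single terminating measure is where the care is required.
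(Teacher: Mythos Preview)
Your proof is correct and follows the same overall strategy as the paper---showing that each of $id$, $\mulrule R$, and $cut$ is admissible in the focused subsystem---but with the lemmas reordered. The paper first establishes a stronger \emph{Deduction Lemma} (from which admissibility of $id$ falls out as a special case), then proves cut-admissibility, and only afterwards proves admissibility of unrestricted $\mulrule R$. In particular, the paper's cut-elimination argument does \emph{not} appeal to $\mulrule R$-admissibility in the $\mulrule R^\foc$ subcase: instead it first disposes of atomic cut formulas via \emph{frontier preservation} (if $A = p$ then necessarily $\Theta = p$, making the cut trivial), after which, since a non-atomic $A$ cannot sit at the head of an irreducible context, $\Gamma$ must be nonempty and the left-branch context is seen to remain irreducible, so one reassembles with $\mulrule R^\foc$ directly. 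Your worry that the context ``can turn into one that begins with a product'' is therefore in fact unfounded, though this only becomes visible once frontier preservation is brought in. Your ordering has the virtue of being self-contained and avoiding that side argument; the paper's ordering buys the Deduction Lemma as an independent result, which is reused later in the proof of the lattice property.
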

\noindent
We refer to derivations constructed using only the rules $\mulrule L$, $\mulrule R^\foc$, and $id^\atm$ as \emph{focused derivations} (the above derivation of 
$(p \mul  (q\mul r))\mul s \le p \mul  (q \mul  (r \mul  s))$
is an example of a focused derivation)
because this stronger form of cut-elimination is precisely analogous to what in the literature on linear logic is called the \emph{focusing} property for a sequent calculus \cite{Andreoli92}.
Now, a careful analysis of the rules $\mulrule L$, $\mulrule R^\foc$, and $id^\atm$ 
also confirms that 
any sequent $\tseq{\Gamma}{A}$ has \emph{at most} one focused derivation.
Combining this with Claims~\ref{claim:soundcomplete} and \ref{claim:focusing}, we can therefore conclude that
\begin{clm}
Every valid entailment in the Tamari order has exactly one focused derivation.
\end{clm}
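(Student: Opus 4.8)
The plan is to read the statement off from Claims~\ref{claim:soundcomplete} and~\ref{claim:focusing} together with one further \emph{uniqueness lemma}: every sequent $\tseq{\Gamma}{A}$ has \emph{at most one} focused derivation. Granting the lemma, suppose $A \le B$ holds in the Tamari order. By Claim~\ref{claim:soundcomplete}, $\tseq{A}{B}$ is derivable using $\mulrule L$, $\mulrule R$, $id$ and $cut$; by Claim~\ref{claim:focusing} it then has a derivation using only $\mulrule L$, $\mulrule R^\foc$ and $id^\atm$, i.e.\ a focused derivation. So there is at least one, and by the lemma at most one, hence exactly one. Conversely, if $A \not\le B$ then $\tseq{A}{B}$ has no focused derivation at all: a focused derivation is a special case of a derivation using $\mulrule L$, $\mulrule R$, $id$ and $cut$, so by Claim~\ref{claim:soundcomplete} its endsequent would force $A \le B$. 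Thus ``exactly one focused derivation'' characterizes precisely the valid entailments.

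To prove the uniqueness lemma I would induct on the total number of occurrences of $\mul$ appearing in $\Gamma$ together with $A$; this quantity strictly decreases from the conclusion of $\mulrule L$ or $\mulrule R^\foc$ to each of its premises, and $id^\atm$ has no premises. It thus suffices to check that, for each sequent $\tseq{\Gamma}{A}$, the last rule of a focused derivation and the premises it stands on are uniquely determined. First one records the easy auxiliary fact (immediate induction) that every focused-derivable sequent has a nonempty context. Now there are three cases. (i) If the leftmost formula of $\Gamma$ is a product $C\mul D$, then $id^\atm$ cannot conclude $\tseq{\Gamma}{A}$ (its context is a single atom) and neither can $\mulrule R^\foc$, since writing $\Gamma = \Gamma^\irr,\Delta$ with $\Gamma^\irr$ nonempty (forced by the auxiliary fact) would make $\Gamma^\irr$ begin with the product $C\mul D$; so the last rule is $\mulrule L$ and its premise $\tseq{C,D,\Delta}{A}$ is fixed. (ii) If $\Gamma$ does not begin with a product and $A$ is an atom $p$, then only $id^\atm$ can apply, which forces $\Gamma$ to be the one-formula context $p=A$ (and if $\Gamma\ne p$ there is simply no focused derivation, consistent with ``at most one''). (iii) If $\Gamma$ does not begin with a product and $A=A_1\mul A_2$, then only $\mulrule R^\foc$ can apply; the one thing left to pin down is the splitting $\Gamma=\Gamma^\irr,\Delta$.

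That last point is the only step that is not completely routine, and I expect it to be the main obstacle; it is handled by a leaf-counting invariant: every focused-derivable sequent $\tseq{\Sigma}{C}$ satisfies that the sum over $\Sigma$ of the numbers of leaves of its formulas equals the number of leaves of $C$. This follows by a one-line induction on the focused derivation (each of $\mulrule L$, $\mulrule R^\foc$, $id^\atm$ preserves the balance), or alternatively from soundness into the Tamari order, which only relates trees of equal size. Since every formula has at least one leaf, the successive prefixes of a context have strictly increasing leaf-totals, so $\Gamma$ has exactly one prefix whose leaf-total equals that of $A_1$; this prefix must be $\Gamma^\irr$, and, being nonempty and a prefix of a context not beginning with a product, it automatically meets the side condition on $\mulrule R^\foc$. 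Hence $\Gamma^\irr$ and $\Delta$ are determined, so the premises $\tseq{\Gamma^\irr}{A_1}$ and $\tseq{\Delta}{A_2}$ are too, and the induction hypothesis finishes case (iii). This completes the lemma and with it the theorem; the real content lies entirely in Claims~\ref{claim:soundcomplete} and~\ref{claim:focusing}, and the passage to ``exactly one focused derivation'' is the bookkeeping above.
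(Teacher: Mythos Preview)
Your proposal is correct and follows essentially the same route as the paper: reduce the claim to an ``at most one focused derivation'' lemma, and prove that lemma by structural induction with the three cases (left-inverting / right-focusing / atomic), using an invariant to pin down the splitting in the $\mulrule R^\foc$ case. The only cosmetic differences are that the paper uses multiset induction on formula sizes where you use the total count of $\mul$'s, and the paper invokes full frontier preservation ($\frontier{\Gamma}=\frontier{A}$) where you use the weaker leaf-count invariant---both choices work and yours are arguably slightly simpler.
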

\noindent
We will see that this powerful \emph{coherence theorem} has several interesting applications.

\subsection{The surprising combinatorics of Tamari intervals, planar maps, and planar lambda terms}
\label{sec:intro:chapo}

The original impetus for this work came from wanting to better understand an apparent link between the Tamari order and lambda calculus, which was inferred indirectly via their mutual connection to the combinatorics of embedded graphs.

About a dozen years ago, Fréderic Chapoton \cite{Chapoton2006} proved the following surprising formula for the number of \emph{intervals} in the Tamari lattice $\Tam{n}$:
\begin{equation}
\frac{2(4n+1)!}{(n+1)!(3n+2)!} \label{tutte-formula1}
\end{equation}
Here, by an ``interval'' of a partially ordered set we just mean a valid entailment $A \le B$, which can also be thought of as the corresponding subposet of elements $[A,B] = \set{C \mid A \le C \le B}$.
For example, the lattice $\Tam{3}$ contains 13 intervals, as we can easily check:
\begin{center}
\includegraphics[width=6cm]{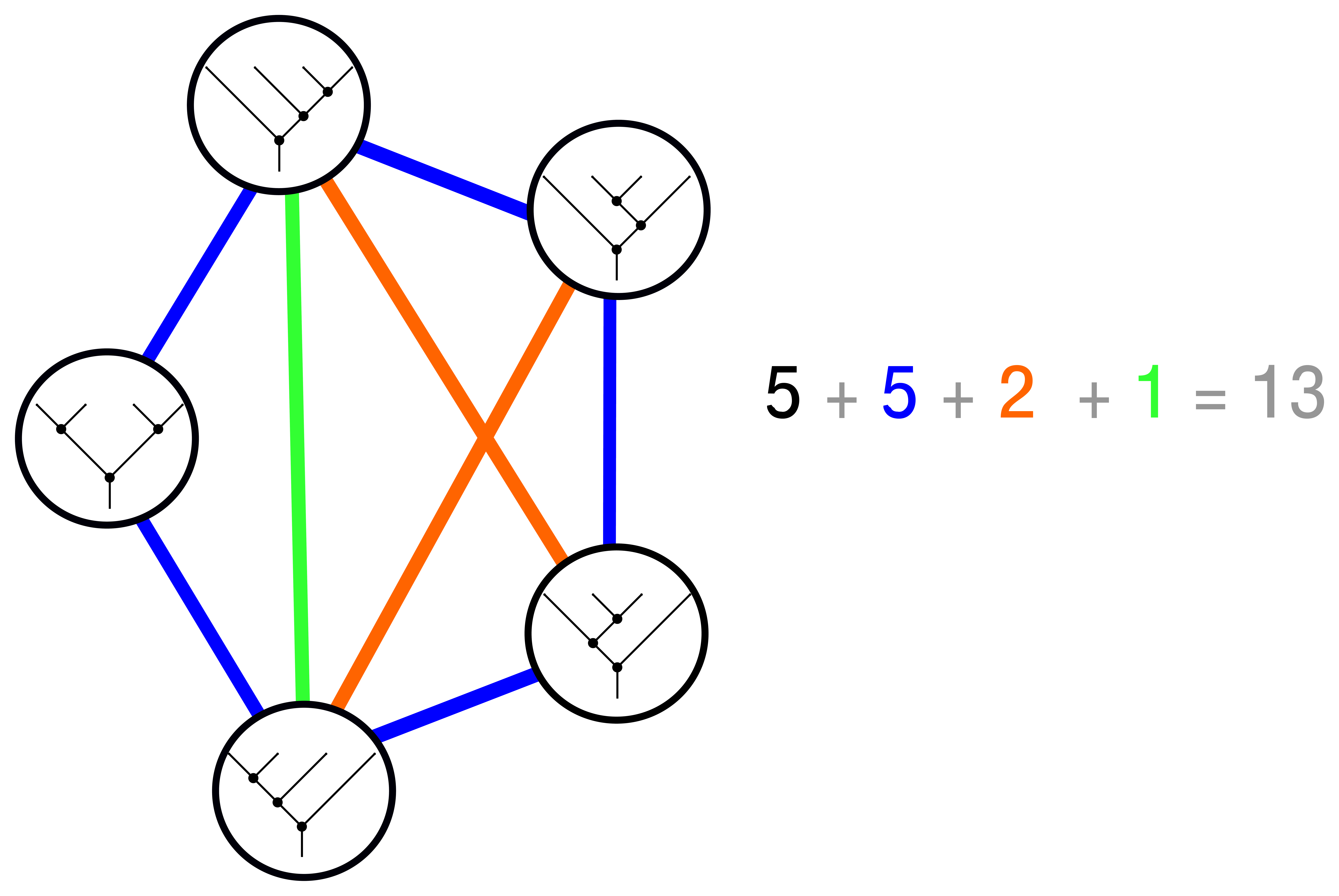}
\end{center}
And at the next dimension, $\Tam{4}$ contains exactly 68 intervals (exercise left to the reader!).

We will explain in Section~\ref{sec:enumeration} how the aforementioned coherence theorem can be used to give a new and somewhat more systematic proof of Chapoton's formula for the number of intervals in $\Tam{n}$.
Chapoton mentions, though, that the formula itself did not come out of thin air, but rather was found by querying the \emph{On-Line Encyclopedia of Integer Sequences} (OEIS) \cite{OEIS}.
Formula \eqref{tutte-formula1} is included within entry \href{https://oeis.org/A000260}{A000260} of the OEIS, and in fact it was derived over half a century ago by the graph theorist Bill Tutte for a seemingly unrelated family of objects: it counts the number of 3-connected, rooted planar triangulations\footnote{A \emph{rooted planar map} is a connected graph embedded in the 2-sphere (or, alternatively, the plane), with one oriented edge chosen as the root. 
A (rooted planar) \emph{triangulation} (dually, trivalent map) is a (rooted planar) map in which every face (dually, vertex) has degree three.
A map is said to be \emph{bridgeless} (respectively, \emph{3-connected}) if it has no edge (respectively, pair of vertices) whose removal disconnects the underlying graph. (Cf.~\cite{LZgraphs}.)} with $3(n+1)$ edges \cite{Tutte1962planartriangulations}.
Although Chapoton's proof of the correspondence was purely enumerative (going via generating functions), sparked by the observation, Bernardi and Bonichon \cite{BeBo2009} later found an explicit bijection between intervals of the Tamari order and triangulations.
Indeed, the same formula is also known to count bridgeless rooted planar maps with $n$ edges \cite{WalshLehman75}, and very recently, Fang \cite{Fang2018} has presented bijections between these three families of objects.

Quite independently, a few years ago the author noticed another surprising combinatorial connection between unconstrained (i.e., not necessarily bridgeless) rooted planar maps and a certain natural fragment of lambda calculus, consisting of terms which are $\beta$-normal and ``planar'' in the sense that variables are used exactly once and in (a well-defined) order.
Like in Chapoton's case, this link was found through the OEIS, since the corresponding counting sequence was already known 
(sequence \href{https://oeis.org/A000168}{A000168}, and once again it was first computed by Tutte, who derived another simple formula for the number of rooted planar maps with $n$ edges: $\frac{2(2n)!3^n}{n!(n+2)!}$).
This curious observation lead to a paper with Alain Giorgetti \cite{ZG2015corr}, where we gave an explicit (recursive) bijection between rooted planar maps and $\beta$-normal planar terms.
Later, in trying to better understand this connection and tie it to another independent connection found recently between linear lambda terms and (non-planar) trivalent maps \cite{BoGaJa2013}, the author noticed that the graph-theoretic condition of containing a bridge has a natural analogue in lambda calculus: it corresponds to the property of containing a \emph{closed subterm} \cite{Z2016trivalent}.
In particular, it is not difficult to check that the bijection described in \cite{ZG2015corr} restricts to a bijection between bridgeless rooted planar maps and $\beta$-normal planar terms with no closed subterms, and therefore that the same formula \eqref{tutte-formula1} also enumerates such terms by size.

Since both the Tamari lattice and lambda calculus are apparently rich with connections to the combinatorics of maps, it is natural to ask
whether they are also more directly related to each other.
This was the original motivation for our proof-theoretic analysis of the Tamari order, and an explicit bijection between Tamari intervals and $\beta$-normal planar terms with no closed subterms was given in an earlier version of this paper \cite{Z2017tamari}, relying in an essential way on the coherence theorem for its proof of correctness.
On the other hand, the meaningfulness of that bijection was not completely clear\footnote{The bijection in \cite{Z2017tamari} is simple and easy to verify, but the issue is that there are two different ordering conventions one can plausibly use to define the planarity of a lambda term (referred to as \emph{LR-planarity} and \emph{RL-planarity} in \cite{ZG2015corr}).
Although both conventions induce families of $\beta$-normal planar terms which are equinumerous with Tamari intervals, only one of the two conventions (namely, RL-planarity) is closed under $\beta$-reduction, and the bijection in \cite{Z2017tamari} is based on the one that isn't (namely, LR-planarity).}, and so it is omitted here in the hope that a more algebraic account will be forthcoming.
Even without this particular application, I believe that the sequent calculus is of intrinsic mathematical interest: Tamari lattices and related associahedra have been studied for over sixty years, so the fact that such an elementary and natural proof-theoretic characterization of semi-associativity has been seemingly overlooked is surprising.\footnote{And perhaps it is a bit of an accident of history: see Appendix~\ref{sec:postscript}!}
In addition to an alternate derivation of Chapoton's result, we will see that it leads to a new and more conceptual proof of the lattice property for $\Tam{n}$.

\subsection{Plan of the paper}

In Section~\ref{sec:seqcal} we establish all of the proof-theoretic properties claimed above, including soundness and completeness of the sequent calculus relative to the Tamari order, as well as the coherence theorem.
Then in Section~\ref{sec:lattice} we explain how the focusing property can be used to prove the lattice property for the Tamari order, and in Section~\ref{sec:enumeration} we explain how the coherence theorem can be used to derive the Tutte--Chapoton formula for the number of intervals in the Tamari lattice $\Tam{n}$.
Appendix~\ref{sec:postscript} includes some brief historical remarks about Tamari and Lambek's overlapping work, and a surprising but unfortunately missed connection.

\section{A sequent calculus for the Tamari order}
\label{sec:seqcal}

\subsection{Definitions and terminology}
\label{sec:seqcal:def}

For reference, we recall here the definition of the sequent calculus introduced in Section~\ref{sec:intro:lambek}, and clarify some notational conventions.
The four rules of the sequent calculus are:
$$
\infer[\mulrule L]{\tseq{A\mul B,\Delta}{C}}{\tseq{A,B,\Delta}{C}}
\qquad
\infer[\mulrule R]{\tseq{\Gamma,\Delta}{A\mul B}}{\tseq{\Gamma}{A} & \tseq{\Delta}{B}}
$$
$$
\qquad
\infer[id]{\tseq{A}{A}}{}
\qquad
\infer[cut]{\tseq{\Gamma,\Theta,\Delta}{B}}{\tseq{\Theta}{A} & \tseq{\Gamma,A,\Delta}{B}}
$$
Uppercase Latin letters ($A,B,\dots$) range over \defn{formulas}, corresponding to fully-bracketed words: 
we say that every formula is either \defn{atomic} (ranged over by lowercase Latin letters $p,q,\dots$) or else \defn{non-atomic} if it is a product ($A\mul B$).
Uppercase Greek letters $\Gamma,\Delta,\dots$ range over \defn{contexts}, corresponding to lists of formulas, with concatenation of contexts (which is of course a strictly associative operation) indicated by a comma.
We will often restrict to non-empty contexts (that is, to lists of length $\ge 1$), although not always (the empty context is denoted $\Delta = \cdot$).
Let us emphasize that as in Lambek's system \cite{Lambek1958} but in contrast to Gentzen's original sequent calculus \cite{Gentzen35}, there are no rules of \emph{weakening}, \emph{contraction}, or \emph{exchange}, so the order and the number of occurrences of a formula within a context matters.

A \defn{sequent} is a pair of a context $\Gamma$ and a formula $A$.
Abstractly, a \defn{derivation} is a tree (technically, a \emph{rooted planar tree with boundary}) 
whose internal nodes are labelled by the names of rules and whose edges are labelled by sequents satisfying the constraints indicated by the given rule.
The \defn{conclusion} of a derivation is the sequent labelling its outgoing root edge, while its \defn{premises} are the sequents labelling any incoming leaf edges.
A derivation with no premises is said to be \defn{closed}.
In addition to writing ``$\tseq{\Gamma}{A}$'' as a notation for sequents, sometimes we also use it as a shorthand to indicate that the given sequent is \defn{derivable} using the above rules, in other words that there exists a closed derivation whose conclusion is that sequent (it will always be clear which of these two senses we mean).
Sometimes we will need to give an explicit name to a derivation with a given conclusion, in which case we write the name over the sequent arrow.

As in the introduction,
when constructing derivations we sometimes write $L$ and $R$ as shorthand for $\mulrule L$ and $\mulrule R$, and usually don't bother labelling the instances of $id$ and $cut$ since they are always clear from context.

Finally, we introduce a few more specialized notions.
We define the \defn{frontier} $\frontier{A}$ of a formula $A$ to be the ordered list of atoms occurring in $A$ (with $\frontier{(A\mul B)} = (\frontier{A},\frontier{B})$ and $\frontier{p} = p$), and the frontier of a context $\Gamma = A_1,\dots,A_n$ as the concatenation of the frontiers of its formulas ($\frontier{\Gamma} = \frontier{A_1},\dots,\frontier{A_n}$).
If $\sigma$ is an arbitrary function sending atoms to atoms (not necessarily injectively), we write $\sigma A$ and $\sigma\Gamma$ for the evident actions on formulas and contexts defined by homomorphic extension.
The following properties are immediate by examination of the four sequent calculus rules.
\begin{prop}
\label{prop:frontier}
Suppose that $\tseq{\Gamma}{A}$. Then
\begin{enumerate}
\item (Frontier preservation:) $\frontier{\Gamma} = \frontier{A}$.
\item (Relabelling:) $\tseq{\sigma\Gamma}{\sigma A}$ for any function $\sigma$ sending atoms to atoms.
\end{enumerate}
\end{prop}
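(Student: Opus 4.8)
The plan is to prove both parts at once by a routine structural induction on the derivation of $\tseq{\Gamma}{A}$, with one case per rule ($\mulrule L$, $\mulrule R$, $id$, $cut$). The only facts needed are that frontiers and relabellings interact well with the two ways sequents are built up: by definition $\frontier{(A\mul B)} = (\frontier{A},\frontier{B})$ and $\frontier{(\Gamma,\Delta)} = (\frontier{\Gamma},\frontier{\Delta})$, while $\sigma(A\mul B) = (\sigma A)\mul(\sigma B)$ and $\sigma(\Gamma,\Delta) = (\sigma\Gamma,\sigma\Delta)$ since $\sigma$ acts by homomorphic extension; note that no injectivity of $\sigma$ is used anywhere.

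For frontier preservation: the $id$ case is the tautology $\frontier{A}=\frontier{A}$; for $\mulrule L$ the inductive hypothesis $\frontier{(A,B,\Delta)}=\frontier{C}$ gives the claim since $\frontier{(A\mul B,\Delta)} = (\frontier{A},\frontier{B},\frontier{\Delta}) = \frontier{(A,B,\Delta)}$; for $\mulrule R$, from $\frontier{\Gamma}=\frontier{A}$ and $\frontier{\Delta}=\frontier{B}$ we get $\frontier{(\Gamma,\Delta)} = (\frontier{A},\frontier{B}) = \frontier{(A\mul B)}$; and for $cut$, from $\frontier{\Theta}=\frontier{A}$ and $\frontier{(\Gamma,A,\Delta)}=\frontier{B}$ we get $\frontier{(\Gamma,\Theta,\Delta)} = (\frontier{\Gamma},\frontier{A},\frontier{\Delta}) = \frontier{(\Gamma,A,\Delta)} = \frontier{B}$. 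For relabelling, the slickest argument is to apply $\sigma$ pointwise to every sequent appearing in a given derivation: because $\sigma$ commutes with product and with context concatenation and sends atoms to atoms, each rule instance is carried to an instance of the same rule, so a closed derivation of $\tseq{\Gamma}{A}$ becomes a closed derivation of $\tseq{\sigma\Gamma}{\sigma A}$. (Equivalently, one runs the same four-case induction pushing $\sigma$ through each rule.)

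I expect no genuine obstacle here: this is exactly the ``immediate by examination of the four rules'' statement it is advertised as. The only place asking for the slightest care is the bookkeeping of how $\frontier{-}$ decomposes across the three-part context $\Gamma,\Theta,\Delta$ in $cut$ and across the split $\Gamma,\Delta$ in $\mulrule R$, but both are immediate from associativity of list concatenation together with the recursive clauses defining $\frontier{-}$.
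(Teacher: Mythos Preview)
Your proposal is correct and is exactly the argument the paper has in mind: the paper gives no explicit proof, stating only that both properties are ``immediate by examination of the four sequent calculus rules,'' which is precisely the four-case structural induction you have spelled out.
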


\subsection{Completeness}
\label{sec:seqcal:completeness}

Let $A \le B$ denote the Tamari order on formulas, that is, the least preorder such that $(A\mul B)\mul C \le A\mul (B\mul C)$ for all $A$, $B$, and $C$, and such that $A_1 \le A_2$ and $B_1 \le B_2$ implies $A_1\mul B_1 \le A_2\mul B_2$.

We begin by establishing completeness of the sequent calculus relative to the Tamari order, which is the easier direction.

\begin{thm}[Completeness]
\label{thm:completeness}
If $A \le B$ then $\tseq{A}{B}$.
\end{thm}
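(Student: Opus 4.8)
The plan is to prove completeness by induction on the derivation of $A \le B$ in the inductive presentation of the Tamari order, showing that each of the generating clauses is matched by a corresponding (possibly cut-using) derivation in the sequent calculus. Since $\le$ is defined as the least preorder closed under the semi-associativity axiom and under congruence for $\mul$, there are five cases to handle: reflexivity, transitivity, the semi-associative law $(A\mul B)\mul C \le A\mul(B\mul C)$, and the two monotonicity clauses — or, more economically, the single two-premise monotonicity rule $A_1 \le A_2, B_1 \le B_2 \Rightarrow A_1\mul B_1 \le A_2\mul B_2$. (One can also fold reflexivity of $\le$ into the $id$ rule and transitivity into $cut$.)

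First I would dispatch reflexivity: $\tseq{A}{A}$ is exactly the $id$ rule. Next, transitivity: given $\tseq{A}{B}$ and $\tseq{B}{C}$, an application of $cut$ (with $\Gamma = \Delta = \cdot$ and $\Theta = A$) yields $\tseq{A}{C}$. For monotonicity, suppose inductively $\tseq{A_1}{A_2}$ and $\tseq{B_1}{B_2}$; I want $\tseq{A_1\mul B_1}{A_2\mul B_2}$. Apply $\mulrule L$ to reduce the goal to $\tseq{A_1,B_1}{A_2\mul B_2}$, then apply $\mulrule R$ (splitting the context as $\Gamma = A_1$, $\Delta = B_1$) with premises $\tseq{A_1}{A_2}$ and $\tseq{B_1}{B_2}$, which are exactly the induction hypotheses. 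The only genuinely substantive case is the semi-associative axiom: I must exhibit a closed derivation of $\tseq{(A\mul B)\mul C}{A\mul(B\mul C)}$. This is obtained by applying $\mulrule L$ twice to peel off the two leftmost products, reducing to $\tseq{A,B,C}{A\mul(B\mul C)}$; then $\mulrule R$ splits the context as $\Gamma = A$, $\Delta = B,C$, leaving $\tseq{A}{A}$ (an $id$) and $\tseq{B,C}{B\mul C}$; a final $\mulrule R$ on the latter splits $B,C$ into $B$ and $C$, closing with two instances of $id$. This is precisely the pattern used in the worked example in the introduction.

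I expect the main obstacle to be essentially bookkeeping rather than conceptual difficulty: one must be careful that the inductive presentation of $\le$ is set up so that every generator is covered, and that the context-splitting in $\mulrule R$ lines up as claimed (here it always does because the relevant sequents have the concatenated frontier split cleanly at the boundary between the two factors of the product on the right). There is no need to invoke cut-elimination or focusing for this direction — cut is used freely to model transitivity — so the proof is short. The harder direction, soundness ($\tseq{A}{B}$ implies $A \le B$, and more generally the correct reading of arbitrary sequents $\tseq{\Gamma}{A}$), is where the real work lies, but that is treated separately.
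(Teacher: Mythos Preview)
Your proposal is correct and matches the paper's proof essentially step for step: reflexivity by $id$, transitivity by $cut$ with empty side contexts, monotonicity by $\mulrule L$ followed by $\mulrule R$, and semi-associativity by two applications of $\mulrule L$ then two of $\mulrule R$ closing with $id$s. The paper presents exactly these four verifications, so there is nothing to add.
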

\begin{proof}
We must show that the relation $\tseq{A}{B}$ is reflexive and transitive, and that the multiplication operation satisfies a semi-associative law and is monotonic in each argument.
All of these properties are straightforward:
\begin{enumerate}
\item Reflexivity: immediate by $id$.
\item Transitivity: immediate by $cut$ (with $\Gamma = \Delta = \cdot$ and $\Theta$ a singleton).
\item Semi-associativity: 
$$
\infer[L]{\tseq{(A\mul B)\mul C}{A\mul (B\mul C)}}{
\infer[L]{\tseq{A\mul B,C}{A\mul (B\mul C)}}{
\infer[R]{\tseq{A,B,C}{A\mul (B\mul C)}}{
 \infer{\tseq{A}{A}}{} &
 \infer[R]{\tseq{B,C}{B\mul C}}{
   \infer{\tseq{B}{B}}{} & \infer{\tseq{C}{C}}{}}}}}
$$
\item Monotonicity:
  \[
\infer[L]{\tseq{A_1\mul B_1}{A_2\mul B_2}}{
\infer[R]{\tseq{A_1,B_1}{A_2\mul B_2}}{
 \tseq{A_1}{A_2} & \tseq{B_1}{B_2}}}
\tag*{\qedhere}
  \]
\end{enumerate}
\end{proof}

\subsection{Soundness}
\label{sec:seqcal:soundness}

To prove soundness relative to the Tamari order, first we have to explain the interpretation of general sequents.
The basic idea is that we can interpret a \emph{non-empty context} as a \emph{left-associated product}.
Thus, a general sequent of the form
$$
\tseq{A_0,A_1,\dots,A_n}{B}
$$
is interpreted as an entailment of the form
$$
(\cdots(A_0\mul A_1)\mul \cdots)\mul A_n \le B
$$
in the Tamari order.
In terms of binary trees, the sequent can be visualized like so:
$$
\vcenter{\hbox{
\begin{tikzpicture}[scale=0.6,thick, grow'=up,
    level 1/.style={level distance=1cm},
    level 2/.style={level distance=1cm,sibling distance=2cm},
    level 3/.style={level distance=1cm,sibling distance=2cm},
    level 4/.style={level distance=1cm,sibling distance=2cm}]
    \coordinate
        child { [fill] circle (4pt)
         child { [fill] circle (4pt)
            child { [fill] circle (4pt)
              child { node {$A_0$} }
              child { node (a2) {$A_1$} }
            }
            child { node (an-1) {$\color{white}A$} }
          }
          child { node (an) {$A_n$} }
        };
    \node at ($ (a2)!0.45!(an) + (0.1,0.1)$) {\rotatebox{0}{$\ddots$}}; 
\end{tikzpicture}}}
\quad\longrightarrow\quad
\vcenter{\hbox{
\begin{tikzpicture}[scale=0.6,thick, grow'=up,
    level 1/.style={level distance=1cm}]
    \coordinate child { node {$B$} };
\end{tikzpicture}}}
$$
That is, the context provides information about the \emph{left-branching spine} of the tree which is below in the right rotation order.

Let $\toF{-}$ be the operation taking any non-empty context $\Gamma$ to the formula $\toF{\Gamma}$ given by its left-associated product.
Recursively, $\toF{-}$ is defined by the following equations:
$$
\toF{A}
= A 
\qquad
\toF{\Gamma,A}
= \toF{\Gamma} \mul A
$$
Critical to soundness of the sequent calculus will be the following property of $\toF{-}$ (expressing that it is an ``oplax'' homomorphism, in the terminology of monoidal functors):
\begin{prop}[Oplaxity]
\label{prop:oplax}
$\toF{\Gamma,\Delta} \le \toF{\Gamma}\mul \toF{\Delta}$ for all non-empty contexts $\Gamma$ and $\Delta$.
\end{prop}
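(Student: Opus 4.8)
The plan is to induct on the context $\Delta$, decomposed from its right end so as to track the recursive definition of $\toF{-}$. In the base case $\Delta$ is a single formula $B$, and here the inequality holds definitionally — indeed with equality — since $\toF{\Gamma,B} = \toF{\Gamma}\mul B = \toF{\Gamma}\mul\toF{B}$, so reflexivity of the Tamari order suffices.

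For the inductive step I would write $\Delta = \Delta',B$ with $\Delta'$ non-empty (so that, together with the standing hypothesis that $\Gamma$ is non-empty, the context $\Gamma,\Delta'$ is non-empty, $\toF{-}$ is defined on it, and the induction hypothesis applies to the pair $\Gamma,\Delta'$). The argument then just chains three facts about the Tamari order: the induction hypothesis $\toF{\Gamma,\Delta'}\le\toF{\Gamma}\mul\toF{\Delta'}$ together with monotonicity of $\mul$ in its left argument gives $\toF{\Gamma,\Delta'}\mul B \le (\toF{\Gamma}\mul\toF{\Delta'})\mul B$; the semi-associative law gives $(\toF{\Gamma}\mul\toF{\Delta'})\mul B \le \toF{\Gamma}\mul(\toF{\Delta'}\mul B)$; and transitivity combines the two. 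Unfolding $\toF{-}$ on both sides — $\toF{\Gamma,\Delta} = \toF{\Gamma,\Delta'}\mul B$ on the left, and $\toF{\Gamma}\mul(\toF{\Delta'}\mul B) = \toF{\Gamma}\mul\toF{\Delta',B} = \toF{\Gamma}\mul\toF{\Delta}$ on the right — this is exactly the desired inequality $\toF{\Gamma,\Delta}\le\toF{\Gamma}\mul\toF{\Delta}$.

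I do not expect any genuine obstacle here: the content is simply that a left-associated product can be ``reassociated downward'' one factor at a time in the Tamari order, and the only thing requiring the slightest care is the non-emptiness bookkeeping that keeps $\toF{-}$ defined on every context appearing in the induction — which is precisely why the induction is carried out on $\Delta$ (matching the $\toF{\Gamma,A} = \toF{\Gamma}\mul A$ clause) and why the statement asks for $\Gamma$ to be non-empty. One could alternatively phrase this as an instance of the general fact that oplax structure maps compose, but for a one-line statement the direct induction is the cleanest route.
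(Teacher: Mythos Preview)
Your proof is correct and follows exactly the same route as the paper: induction on $\Delta$ decomposed from the right, with the base case immediate and the inductive step chaining the induction hypothesis with monotonicity and the semi-associative law. The only difference is that you are slightly more explicit about the non-emptiness bookkeeping and the use of transitivity, which the paper leaves implicit.
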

\begin{proof}
By induction on $\Delta$.
The case of a singleton context $\Delta = A$ is immediate.
Otherwise, if $\Delta = (\Delta',A)$, we reason as follows:
\begin{align*}
\toF{\Gamma,\Delta',A} &= \toF{\Gamma,\Delta'}\mul A &\text{(by definition)}
\\ &
\le (\toF{\Gamma}\mul \toF{\Delta'})\mul A & \text{(by the i.h.~and monotonicity)}
\\ &
\le \toF{\Gamma}\mul (\toF{\Delta'}\mul A) &\text{(semi-associativity)}
\\ &
 = \toF{\Gamma}\mul \toF{\Delta',A}  &\text{(by definition)}
\tag*{\qedhere}
\end{align*}
\end{proof}
\noindent
The operation $\toF{-}$ can also be equivalently described in terms of a \emph{right action} $\actF{A}{\Delta}$ of an arbitrary context on a formula, defined by the following equations:
$$
\actF{A}{\cdot}
= A
\qquad
\actF{A}{(\Delta,B)}
= (\actF{A}{\Delta})\mul B
$$
\begin{prop}[Right action]
\label{prop:equiv}
$\toF{\Gamma,\Delta} = \actF{\toF{\Gamma}}{\Delta}$ for all non-empty $\Gamma$ and arbitrary $\Delta$.
\end{prop}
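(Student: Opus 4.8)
The plan is to prove the identity by induction on the context $\Delta$, following the recursive definition of the right action $\actF{-}{-}$. Nothing here involves the Tamari order or monotonicity: the statement is a purely equational bookkeeping lemma asserting that $\toF{-}$ and $\actF{-}{-}$ are two presentations of the same left-associated product, and the proof is correspondingly short.

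In the base case $\Delta = \cdot$, concatenation with the empty context gives $\toF{\Gamma,\cdot} = \toF{\Gamma}$, while the first defining equation of the right action gives $\actF{\toF{\Gamma}}{\cdot} = \toF{\Gamma}$, so the two sides coincide. In the inductive step I would write $\Delta = (\Delta',B)$ and compute
$$
\toF{\Gamma,\Delta',B} = \toF{(\Gamma,\Delta'),B} = \toF{\Gamma,\Delta'}\mul B = (\actF{\toF{\Gamma}}{\Delta'})\mul B = \actF{\toF{\Gamma}}{(\Delta',B)},
$$
where the second step is the defining clause $\toF{\Theta,A} = \toF{\Theta}\mul A$ instantiated at the context $\Gamma,\Delta'$, the third step is the induction hypothesis applied to $\Delta'$ (with $\Gamma$ held fixed), and the last step is the defining clause $\actF{A}{(\Delta,B)} = (\actF{A}{\Delta})\mul B$.

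There is no real obstacle. The only subtlety worth flagging is that $\Gamma$ is required to be non-empty precisely so that $\toF{\Gamma}$ is defined, whereas $\Delta$ ranges over arbitrary, possibly empty, contexts; this is exactly why the induction is carried out on $\Delta$ rather than on $\Gamma$, and since $\toF{\Gamma}$ is left untouched throughout the argument, its being well-defined follows once and for all from the hypothesis on $\Gamma$.
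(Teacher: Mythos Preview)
Your proof is correct and follows exactly the approach indicated in the paper, namely induction on $\Delta$. The paper leaves the details implicit (``immediate by induction on $\Delta$''), and you have simply unpacked them.
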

\begin{prop}[Monotonicity]
\label{prop:monotonic}
If $A \le A'$ then $\actF{A}{\Delta} \le \actF{A'}{\Delta}$.
\end{prop}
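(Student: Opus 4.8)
The plan is to prove this by a straightforward induction on the context $\Delta$, using the recursive definition of the right action $\actF{-}{-}$ together with the fact that $\le$ is a preorder (in particular reflexive) and that $\mul$ is monotonic in each argument.

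First I would handle the base case $\Delta = \cdot$: here $\actF{A}{\cdot} = A$ and $\actF{A'}{\cdot} = A'$ by definition, so the hypothesis $A \le A'$ gives the conclusion immediately. For the inductive step, suppose $\Delta = (\Delta', B)$ and that the statement holds for $\Delta'$. By definition of the right action, $\actF{A}{(\Delta',B)} = (\actF{A}{\Delta'})\mul B$ and $\actF{A'}{(\Delta',B)} = (\actF{A'}{\Delta'})\mul B$. The induction hypothesis yields $\actF{A}{\Delta'} \le \actF{A'}{\Delta'}$, and combining this with $B \le B$ (reflexivity) and monotonicity of $\mul$ gives $(\actF{A}{\Delta'})\mul B \le (\actF{A'}{\Delta'})\mul B$, which is exactly the desired inequality.

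I do not expect any real obstacle here: the proof is a routine structural induction, and the only ingredients needed — reflexivity of the Tamari order and monotonicity of the product — are part of its very definition as recalled just before Theorem~\ref{thm:completeness}. If anything, the one point worth being careful about is simply that the induction is on $\Delta$ (not on $A$ or $A'$), since the right action recurses on its context argument; with that in mind the argument goes through without complication.
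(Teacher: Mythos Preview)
Your proposal is correct and follows the same approach as the paper: a straightforward induction on $\Delta$, appealing to monotonicity of the operations $-\mul B$ in the inductive step. The paper's own proof is in fact even terser, simply noting that the result is immediate by induction on $\Delta$ using monotonicity of $-\mul B$.
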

\begin{proof}
Both properties are immediate by induction on $\Delta$.
(In the case of Prop.~\ref{prop:monotonic} we appeal to monotonicity of the operations $-\mul B$.)
\end{proof}

\noindent
We are now ready to prove soundness.
\begin{thm}[Soundness]
\label{thm:soundness}
If $\tseq{\Gamma}{A}$ then $\toF{\Gamma} \le A$.
\end{thm}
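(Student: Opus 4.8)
The plan is to prove the theorem by induction on the structure of a closed derivation of $\tseq{\Gamma}{A}$, showing that each of the four rules preserves the property that $\toF{-}$ sends the left-hand context below the right-hand formula in the Tamari order. A preliminary remark makes the bookkeeping uniform: by Proposition~\ref{prop:frontier}(1) together with the fact that every formula has a non-empty frontier, any context occurring on the left of a \emph{derivable} sequent is non-empty, so that $\toF{-}$ is defined on it and Propositions~\ref{prop:oplax}, \ref{prop:equiv}, and \ref{prop:monotonic} are all available for the contexts we encounter. With that in hand, three of the cases are short. The $id$ case is immediate since $\toF{A} = A \le A$ by reflexivity. The $\mulrule L$ case is in fact an \emph{equality} of interpretations rather than an inequality: by Proposition~\ref{prop:equiv} (and $\toF{A,B} = A\mul B$) we have $\toF{A\mul B,\Delta} = \actF{A\mul B}{\Delta} = \toF{A,B,\Delta}$, so the induction hypothesis for the premise $\tseq{A,B,\Delta}{C}$ transfers verbatim. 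For $\mulrule R$, the induction hypotheses give $\toF{\Gamma} \le A$ and $\toF{\Delta} \le B$, and then oplaxity (Proposition~\ref{prop:oplax}) followed by monotonicity of $\mul$ yields $\toF{\Gamma,\Delta} \le \toF{\Gamma}\mul\toF{\Delta} \le A\mul B$.

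The $cut$ rule is the only case that requires genuine work. From the premises $\tseq{\Theta}{A}$ and $\tseq{\Gamma,A,\Delta}{B}$ the induction hypotheses give $\toF{\Theta} \le A$ and $\toF{\Gamma,A,\Delta} \le B$, and the goal is $\toF{\Gamma,\Theta,\Delta} \le B$. The first step is to peel off $\Delta$: since $\Theta$ is non-empty, both $\Gamma,\Theta$ and $\Gamma,A$ are non-empty, so Proposition~\ref{prop:equiv} rewrites the goal as $\actF{\toF{\Gamma,\Theta}}{\Delta} \le B$ and the right-hand hypothesis as $\actF{\toF{\Gamma,A}}{\Delta} \le B$; hence by monotonicity of the right action (Proposition~\ref{prop:monotonic}) it suffices to prove $\toF{\Gamma,\Theta} \le \toF{\Gamma,A}$. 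To establish this one splits on whether $\Gamma$ is empty: if it is, the claim is exactly the induction hypothesis $\toF{\Theta} \le A$; if it is not, then $\toF{\Gamma,\Theta} \le \toF{\Gamma}\mul\toF{\Theta} \le \toF{\Gamma}\mul A = \toF{\Gamma,A}$, using oplaxity, monotonicity of $\mul$, and the defining equation of $\toF{-}$.

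I expect this $cut$ case, and specifically the handling of possibly-empty $\Gamma$ and $\Delta$, to be the only delicate point: at each appeal to oplaxity or to Proposition~\ref{prop:equiv} one must check that the relevant context really is non-empty, which is exactly what frontier preservation applied to the appropriate premise guarantees. Conceptually, however, there is no new idea beyond oplaxity: the theorem merely records that the Tamari order, with non-empty contexts interpreted as left-associated products, soundly models all four sequent rules, with Proposition~\ref{prop:oplax} carrying the real weight in the two multiplicative rules.
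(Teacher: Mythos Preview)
Your proof is correct and follows essentially the same approach as the paper: induction on derivations, with the $\mulrule L$ case handled via the right action (Proposition~\ref{prop:equiv}), the $\mulrule R$ case via oplaxity (Proposition~\ref{prop:oplax}) and monotonicity, and the $cut$ case via a chain combining the right action, oplaxity, and monotonicity. Your explicit case split on whether $\Gamma$ is empty in the $cut$ case is in fact slightly more careful than the paper's presentation, which writes $\toF{\Gamma}\mul\toF{\Theta}$ without commenting on the possibility that $\Gamma$ is empty.
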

\begin{proof}
By induction on the (closed) derivation of $\tseq{\Gamma}{A}$.
There are four cases, corresponding to the four rules of the sequent calculus:
\begin{itemize}
\item (Case $\mulrule L$): The derivation ends in
$$\infer[\mulrule L]{\tseq{A\mul B,\Delta}{C}}{\tseq{A,B,\Delta}{C}}$$
Since $\toF{A\mul B} = A\mul B = \toF{A,B}$, we have that $\toF{A\mul B,\Delta} = \actF{\toF{A\mul B}}\Delta = \actF{\toF{A,B}}\Delta = \toF{A,B,\Delta} \le C$ by the induction hypothesis.

\item (Case $\mulrule R$): The derivation ends in
$$\infer[\mulrule R]{\tseq{\Gamma,\Delta}{A\mul B}}{\tseq{\Gamma}{A} & \tseq{\Delta}{B}}$$
By induction we have $\toF{\Gamma} \le A$ and $\toF{\Delta} \le B$.
But then $\toF{\Gamma,\Delta} \le \toF{\Gamma}\mul \toF{\Delta} \le A\mul B$
by oplaxity and monotonicity.
\item (Case $id$): Immediate by reflexivity.
\item (Case $cut$): The derivation ends in
$$
\infer[cut]{\tseq{\Gamma,\Theta,\Delta}{B}}{\tseq{\Theta}{A} & \tseq{\Gamma,A,\Delta}{B}}
$$
We reason as follows:
\begin{align*}
\toF{\Gamma,\Theta,\Delta} &
= \actF{\toF{\Gamma,\Theta}}{\Delta}  & \text{(right action)}
\\ &
\le \actF{(\toF{\Gamma}\mul \toF{\Theta})}{\Delta}     & \text{(oplaxity + monotonicity)}
\\ &
\le \actF{(\toF{\Gamma}\mul A)}{\Delta}                & \text{(i.h. + monotonicity)}
\\ & 
= \toF{\Gamma,A,\Delta}                            & \text{(right action)}
\\  &
\le B                                              & \text{(i.h.)}
\tag*{\qedhere}
\end{align*}
\end{itemize}
\end{proof}

\subsection{Focusing completeness}
\label{sec:seqcal:foccomp}

Cut-elimination theorems are a staple of proof theory, and often provide a rich source of information about a given logic.
In this section we will prove a \emph{focusing} property for the sequent calculus, which is an even stronger form of cut-elimination originally formulated by Andreoli in the setting of linear logic \cite{Andreoli92}.

\begin{defi}\label{def:focusing}
A context $\Gamma$ is said to be \defn{reducible} if its leftmost formula is non-atomic, and \defn{irreducible} otherwise.
A sequent $\tseq{\Gamma}{A}$ is said to be:
\begin{itemize}
\item \defn{left-inverting} if $\Gamma$ is reducible;
\item \defn{right-focusing} if $\Gamma$ is irreducible and $A$ is non-atomic;
\item \defn{atomic} if $\Gamma$ is irreducible and $A$ is atomic.
\end{itemize}
\end{defi}
\begin{prop}\label{prop:seqclass}
Any sequent is either left-inverting, right-focusing, or atomic.
\end{prop}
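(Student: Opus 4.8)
This proposition is a trichotomy statement, so the plan is essentially to observe that the three classes exhaust all possibilities and are mutually exclusive, by a direct case analysis on the two parameters defining a sequent $\tseq{\Gamma}{A}$: whether the context $\Gamma$ is reducible or irreducible, and whether the formula $A$ is atomic or non-atomic.

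First I would recall from Definition~\ref{def:focusing} that ``reducible'' and ``irreducible'' are complementary by fiat: a context is reducible exactly when its leftmost formula is non-atomic, and irreducible otherwise. (Implicit here is that we are working with non-empty contexts, so that ``leftmost formula'' makes sense; this is consistent with the standing convention in Section~\ref{sec:seqcal:def} that contexts appearing in sequents are non-empty.) Likewise, by the definition of formulas in Section~\ref{sec:seqcal:def}, every formula is either atomic or non-atomic, and these are mutually exclusive. So the pair $(\Gamma, A)$ falls into exactly one of four boxes according to (reducible vs.\ irreducible) $\times$ (atomic vs.\ non-atomic).

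Then I would match each box against the three named classes: if $\Gamma$ is reducible, the sequent is left-inverting regardless of $A$ (covering two boxes); if $\Gamma$ is irreducible and $A$ is non-atomic, it is right-focusing; and if $\Gamma$ is irreducible and $A$ is atomic, it is atomic. This shows the three classes jointly cover all four boxes. For mutual exclusivity, note that left-inverting requires $\Gamma$ reducible while both right-focusing and atomic require $\Gamma$ irreducible, so left-inverting cannot overlap the other two; and right-focusing requires $A$ non-atomic whereas atomic requires $A$ atomic, so those two are disjoint as well. Hence every sequent lies in exactly one class.

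There is really no main obstacle here — the statement is an immediate unwinding of the definitions, and the only thing to be slightly careful about is the non-emptiness convention, which I would flag explicitly (or alternatively treat the empty context as a degenerate irreducible case if one wants to allow it, noting that the soundness and focusing arguments only ever invoke the classification on non-empty contexts). The ``proof'' can be written in two or three sentences.
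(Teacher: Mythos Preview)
Your proposal is correct and matches the paper's approach: the paper states the proposition without proof, treating it as immediate from Definition~\ref{def:focusing}, which is exactly the direct case analysis you spell out. Your remark about the empty-context edge case is harmless but unnecessary here, since by the ``otherwise'' clause an empty context would simply count as irreducible and the trichotomy still goes through.
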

\begin{defi}
A closed derivation $\mathcal{D}$ is said to be \defn{focused} if left-inverting sequents only appear as the conclusions of $\mulrule L$, right-focusing sequents only as the conclusions of $\mulrule R$, and atomic sequents only as the conclusions of $id$.
\end{defi}
\noindent
We write ``$\Gamma^\irr$'' to indicate that a context $\Gamma$ is irreducible.
\begin{prop}\label{prop:equivfocus}
A closed derivation is focused if and only if it is constructed using only $\mulrule L$ and the following restricted forms of $\mulrule R$ and $id$ (and no instances of $cut$):
$$
\infer[\mulrule R^\foc]{\tseq{\Gamma^\irr,\Delta}{A\mul B}}{\tseq{\Gamma^\irr}{A} & \tseq{\Delta}{B}}
\qquad
\infer[id^\atm]{\tseq{p}{p}}{}
$$
\end{prop}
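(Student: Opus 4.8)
The plan is a direct case analysis on the rules appearing in a closed derivation, leaning on Proposition~\ref{prop:seqclass} (every sequent is left-inverting, right-focusing, or atomic, and these are mutually exclusive) together with the obvious fact that every sequent occurring in a closed derivation is the conclusion of exactly one rule instance. First I would record once and for all that every context occurring in a closed derivation is non-empty: this follows from frontier preservation (Proposition~\ref{prop:frontier}(1)), since $\frontier{A}$ is never empty. Consequently, for a context $\Gamma$ occurring in a closed derivation, ``$\Gamma$ irreducible'' really is equivalent to ``the leftmost formula of $\Gamma$ is atomic'', which is what makes the side condition on $\mulrule R^\foc$ behave as expected.

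The next step is to classify the conclusions of the three restricted rules. Inspecting them: the conclusion $\tseq{A\mul B,\Delta}{C}$ of $\mulrule L$ is left-inverting (its leftmost formula $A\mul B$ is non-atomic); the conclusion $\tseq{\Gamma^\irr,\Delta}{A\mul B}$ of $\mulrule R^\foc$ is right-focusing (the leftmost formula of $\Gamma^\irr,\Delta$ is that of the non-empty irreducible context $\Gamma^\irr$, hence atomic, while the succedent $A\mul B$ is non-atomic); and the conclusion $\tseq{p}{p}$ of $id^\atm$ is atomic. For the ``if'' direction I take a closed derivation built only from $\mulrule L$, $\mulrule R^\foc$, $id^\atm$, pick any left-inverting sequent occurring in it, and note it is the conclusion of some instance; by the classification just given together with mutual exclusivity of the three kinds, that instance can be neither $\mulrule R^\foc$ nor $id^\atm$, so it is an instance of $\mulrule L$. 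The symmetric arguments show that right-focusing sequents occur only as conclusions of $\mulrule R^\foc$ — in particular of $\mulrule R$ — and atomic sequents only as conclusions of $id^\atm$ — in particular of $id$; so the derivation is focused.

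For the ``only if'' direction I start from a focused derivation $\mathcal{D}$ and examine each rule instance through its conclusion $s$. If the instance were $cut$, then $s$, being of one of the three kinds, would have to be the conclusion of $\mulrule L$, $\mulrule R$, or $id$ accordingly, contradicting that it is a $cut$; so $\mathcal{D}$ has no $cut$. If the instance is $\mulrule R$, its conclusion $\tseq{\Gamma,\Delta}{A\mul B}$ has a non-atomic succedent so is not atomic, and it is not left-inverting (else it would be a conclusion of $\mulrule L$, not $\mulrule R$), hence it is right-focusing; thus $\Gamma,\Delta$ is irreducible, and since $\Gamma$ is non-empty its leftmost formula is that of $\Gamma,\Delta$, which is atomic, so $\Gamma = \Gamma^\irr$ and the instance has the form $\mulrule R^\foc$. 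If the instance is $id$, its conclusion $\tseq{A}{A}$ cannot be left-inverting (else it would be a conclusion of $\mulrule L$), so $A$ is atomic and the instance has the form $id^\atm$. Hence $\mathcal{D}$ is built only from $\mulrule L$, $\mulrule R^\foc$, $id^\atm$.

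I do not expect any genuine obstacle: the statement is essentially a definitional repackaging, and the only point needing a little care is the tacit empty-context convention, which is why I would dispatch the non-emptiness remark at the very start so that ``irreducible'' and ``atom at the leftmost end'' can be used interchangeably throughout the case analysis.
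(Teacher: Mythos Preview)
Your argument is correct and carefully handles the one subtle point (non-emptiness of contexts, so that ``irreducible'' coincides with ``leftmost formula atomic''); the paper itself states this proposition without proof, treating it as an immediate unpacking of Definition~\ref{def:focusing}, so your case analysis is exactly the kind of routine verification the paper leaves to the reader.
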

\noindent
\begin{exa}
One way to derive 
$\tseq{((p\mul q)\mul r)\mul s}{p\mul ((q\mul r)\mul s)}$
is by cutting together the two derivations
$$
\vcenter{ \infer[L]{\tseq{((p\mul q)\mul r)\mul s}{(p\mul (q\mul r))\mul s}}{
  \infer[R]{\tseq{(p\mul q)\mul r,s}{(p\mul (q\mul r))\mul s}}{
   \deduce{\tseq{(p\mul q)\mul r}{p\mul (q\mul r)}}{\mathcal{SA}_{p,q,r}} &
   \infer{\tseq{s}{s}}{}}}}
\qquad\text{and}\qquad
\vcenter{ \deduce{\tseq{(p\mul (q\mul r))\mul s}{p\mul ((q\mul r)\mul s)}}{\mathcal{SA}_{p,q\mul r,s}}}
$$
where $\mathcal{SA}_{A,B,C}$ is the derivation of the semi-associative law $\tseq{(A\mul B)\mul C}{A\mul (B\mul C)}$ from the proof of Theorem~\ref{thm:completeness}.
Clearly this is \emph{not} a focused derivation (besides the cut rule, it also uses instances of $\mulrule R$ and $id$ with a left-inverting conclusion).
However, it is possible to give a focused derivation of the same sequent:
$$
\infer[L]{\tseq{((p\mul q)\mul r)\mul s}{p\mul ((q\mul r)\mul s)}}{
\infer[L]{\tseq{(p\mul q)\mul r,s}{p\mul ((q\mul r)\mul s)}}{
\infer[L]{\tseq{p\mul q,r,s}{p\mul ((q\mul r)\mul s)}}{
\infer[R]{\tseq{p,q,r,s}{p\mul ((q\mul r)\mul s)}}{
  \infer{\tseq{p}{p}}{} &
  \infer[R]{\tseq{q,r,s}{(q\mul r)\mul s}}{
   \infer[R]{\tseq{q,r}{q\mul r}}{
     \infer{\tseq{q}{q}}{} &
     \infer{\tseq{r}{r}}{}} &
  \infer{\tseq{s}{s}}{}}}}}}
$$
\end{exa}
\noindent
In the below, we write ``$\fseq{\Gamma}{A}$'' as a shorthand notation to indicate that $\tseq{\Gamma}{A}$ has a (closed) focused derivation, and ``$\deriv{D} : \fseq{A}{B}$'' to indicate that $\deriv{D}$ is a particular focused derivation of $\tseq{A}{B}$.
\begin{thm}[Focusing completeness]
\label{thm:foc-completeness}
If $\tseq{\Gamma}{A}$ then $\fseq{\Gamma}{A}$.
\end{thm}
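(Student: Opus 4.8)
The plan is to show that the class of focused derivations is closed under each of the four rules $\mulrule L$, $\mulrule R$, $id$, and $cut$; focusing completeness then follows immediately by induction on the given (unrestricted) derivation of $\tseq{\Gamma}{A}$, recursively converting the subderivations of the premises to focused form and recombining them with the appropriate closure operation. Closure under $\mulrule L$ is trivial, since $\mulrule L$ is itself one of the rules generating focused derivations (Prop.~\ref{prop:equivfocus}): applying it to a focused derivation of $\tseq{A,B,\Delta}{C}$ yields a focused derivation of $\tseq{A\mul B,\Delta}{C}$. The remaining three cases I would isolate as lemmas and establish in the order: admissibility of $\mulrule R$, admissibility of $id$ (identity expansion), and finally admissibility of $cut$.

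For $\mulrule R$-admissibility — if $\fseq{\Gamma}{A}$ and $\fseq{\Delta}{B}$ then $\fseq{\Gamma,\Delta}{A\mul B}$ — I would induct on the focused derivation of $\tseq{\Gamma}{A}$. If $\Gamma$ is irreducible the conclusion follows by one application of $\mulrule R^\foc$. If $\Gamma$ is reducible, say $\Gamma = C\mul D,\Gamma'$, then $\tseq{\Gamma}{A}$ is left-inverting, so by the definition of focused derivation (together with Prop.~\ref{prop:seqclass}) its derivation ends in $\mulrule L$ with premise $\tseq{C,D,\Gamma'}{A}$; the induction hypothesis gives $\fseq{C,D,\Gamma',\Delta}{A\mul B}$, and one further $\mulrule L$ concludes $\fseq{C\mul D,\Gamma',\Delta}{A\mul B}$. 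Identity expansion — $\fseq{A}{A}$ for every formula $A$ — then follows by a straightforward induction on $A$, using $id^\atm$ when $A$ is atomic and $\mulrule R$-admissibility followed by $\mulrule L$ when $A = C\mul D$.

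The crux is $cut$-admissibility: if $\fseq{\Theta}{A}$ and $\fseq{\Gamma,A,\Delta}{B}$ then $\fseq{\Gamma,\Theta,\Delta}{B}$. I would prove this by a lexicographic induction whose primary component is the size of the cut formula $A$ and whose secondary component is the combined size of the two given focused derivations $\deriv{E}$ of $\tseq{\Theta}{A}$ and $\deriv{F}$ of $\tseq{\Gamma,A,\Delta}{B}$. The argument splits on whether the prefix $\Gamma$ is empty. If $\Gamma$ is nonempty, the cut permutes upward into $\deriv{F}$: when $\Gamma$ is reducible, $\deriv{F}$ ends in $\mulrule L$ and the cut commutes past it; when $\Gamma$ is irreducible, then (since $\Gamma,A,\Delta$ is irreducible and has at least two formulas) $\deriv{F}$ must end in $\mulrule R^\foc$, and the cut commutes into whichever of its two premises displays the occurrence of $A$, the point to check being that replacing $A$ by $\Theta$ does not spoil the irreducibility side condition on the left premise, which holds because $\Gamma$ begins with an atom. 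If $\Gamma$ is empty, we case on the last rule of $\deriv{E}$: if it is $id^\atm$ the cut is trivial (return $\deriv{F}$); if it is $\mulrule L$ the cut permutes upward into $\deriv{E}$; and the principal case is when $\deriv{E}$ ends in $\mulrule R^\foc$, so $A = A_1\mul A_2$ with subderivations of $\tseq{\Theta_1^\irr}{A_1}$ and $\tseq{\Theta_2}{A_2}$, while $\deriv{F}$ (now concluding $\tseq{A_1\mul A_2,\Delta}{B}$) necessarily ends in $\mulrule L$ with premise $\tseq{A_1,A_2,\Delta}{B}$; one then cuts on $A_2$ and then on $A_1$, each strictly smaller than $A$, to obtain first $\fseq{A_1,\Theta_2,\Delta}{B}$ and then $\fseq{\Theta_1^\irr,\Theta_2,\Delta}{B}$, which is the desired conclusion.

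I expect $cut$-admissibility to be the main obstacle — not any single case in isolation, but the global bookkeeping: choosing a termination measure that simultaneously handles the principal reduction (which shrinks the cut formula but replaces one cut by two) and the numerous commutative cases (which preserve the cut formula while shrinking one of the two derivations), and carefully tracking how the position of the cut formula relative to the context split introduced by $\mulrule R^\foc$ evolves under the permutations, so that the irreducibility conditions remain satisfied throughout. With all four closure properties in hand, the theorem follows.
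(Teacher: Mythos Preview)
Your proposal is correct and follows the same overall strategy as the paper: establish that focused derivations are closed under $id$, $\mulrule R$, and $cut$, then conclude by induction on the unrestricted derivation.  The differences are organizational rather than essential.  You prove $\mulrule R$-admissibility first and derive $\fseq{A}{A}$ from it by a direct identity-expansion on $A$; the paper instead proves a more general \emph{Deduction Lemma} (Lemma~\ref{lem:focgenid}: if $\fseq{\Gamma^\irr}{A}$ implies $\fseq{\Gamma^\irr,\Delta}{B}$ for all $\Gamma^\irr$, then $\fseq{A,\Delta}{B}$), obtains $\fseq{A}{A}$ as a corollary, and only afterwards proves $\mulrule R$-admissibility.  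Your route is slightly more economical for the present theorem, but the paper's Deduction Lemma is reused later (in Proposition~\ref{prop:decompinv}, for the universal property of maximal decompositions), so it earns its keep.  Your cut-admissibility argument is also organized differently---you branch first on whether $\Gamma$ is empty, whereas the paper branches first on the root rule of the right derivation and invokes frontier preservation to dispatch the atomic cut-formula case in one stroke---but the two case analyses cover the same ground and use the same lexicographic induction measure.
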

\noindent
To prove the focusing completeness theorem, it suffices to show that the cut rule as well as the unrestricted forms of $id$ and $\mulrule R$ are all \emph{admissible} for focused derivations, in the standard proof-theoretic sense that given focused derivations of their premises, we can obtain a focused derivation of their conclusion.
We begin by proving a \emph{focused deduction} lemma (cf.~\cite{ReedPfenning10}), which entails the admissibility of $id$, then show $cut$ and $\mulrule R$ in turn.
\begin{lem}[Deduction]
\label{lem:focgenid}
If $\fseq{\Gamma^\irr}{A}$ implies $\fseq{\Gamma^\irr,\Delta}{B}$ for all $\Gamma^\irr$, then $\fseq{A,\Delta}{B}$.
In particular, $\fseq{A}{A}$.
\end{lem}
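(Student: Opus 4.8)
The plan is to prove the lemma by induction on the formula $A$, obtaining the ``in particular'' clause afterwards as the special case $\Delta=\cdot$, $B=A$, for which the hypothesis is trivially true. The base case $A=p$ is immediate: $\fseq{p}{p}$ holds by $id^\atm$ and the singleton context $p$ is irreducible, so instantiating the hypothesis at $\Gamma^\irr:=p$ yields $\fseq{p,\Delta}{B}$. The inductive case $A=A_1\mul A_2$ is the real content. The naive attempt — reduce the goal $\fseq{A_1\mul A_2,\Delta}{B}$ to $\fseq{A_1,A_2,\Delta}{B}$ by $\mulrule L$ and then peel off $A_1$ and $A_2$ via the induction hypotheses — gets stuck: after applying the IH for $A_1$ one is left having to produce $\fseq{\Gamma^\irr,A_2,\Delta}{B}$ for an arbitrary fixed irreducible $\Gamma^\irr$ with $\fseq{\Gamma^\irr}{A_1}$, but the IH for $A_2$, as stated, only ever delivers sequents whose context begins with $A_2$ — it cannot carry a residual $\Gamma^\irr$ in front.

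To bypass this I would route the argument through the canonical left-associated irreducible decomposition of a formula, $\hat{\cdot}$, defined by $\hat p:=p$ and $\widehat{A_1\mul A_2}:=(\hat{A_1},A_2)$. By construction $\hat A$ is always an irreducible context (its leftmost formula is the leftmost atom of $A$), and the key point is that $\hat A$ occupies the \emph{front} of any context $\hat A,\Delta$, so it can be re-folded by $\mulrule L$. I would prove, simultaneously by induction on $A$, the two statements: (a) $\fseq{\hat A}{A}$; and (b) for all $\Delta,B$, $\fseq{\hat A,\Delta}{B}$ implies $\fseq{A,\Delta}{B}$. For $A=p$ both are trivial. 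For $A=A_1\mul A_2$: for (a), the IH gives $\fseq{\hat{A_1}}{A_1}$, and applying (a) then (b) to the subformula $A_2$ at $\Delta=\cdot$ gives $\fseq{A_2}{A_2}$, so since $\hat{A_1}$ is irreducible one application of $\mulrule R^\foc$ yields $\fseq{\hat{A_1},A_2}{A_1\mul A_2}$, which is $\fseq{\widehat{A_1\mul A_2}}{A_1\mul A_2}$; for (b), from $\fseq{\hat{A_1},A_2,\Delta}{B}$ the IH (b) for $A_1$ with trailing context $A_2,\Delta$ gives $\fseq{A_1,A_2,\Delta}{B}$, and $\mulrule L$ then gives $\fseq{A_1\mul A_2,\Delta}{B}$. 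Granting (a) and (b), the lemma is immediate: $\hat A$ is an irreducible context with $\fseq{\hat A}{A}$ by (a), so the hypothesis applied at $\Gamma^\irr:=\hat A$ gives $\fseq{\hat A,\Delta}{B}$, whence $\fseq{A,\Delta}{B}$ by (b).

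The main obstacle is precisely the mismatch described above: a plain structural induction does not close the product case because the induction hypothesis cannot absorb an irreducible context sitting to the left of $A$, and the obvious remedy — strengthening the statement to permit a prepended irreducible context — does not actually help, since it merely relocates the difficulty to the case of a product buried in the middle of a context, which $\mulrule L$ (firing only at the left end) cannot reach. Factoring the argument through $\hat A$, which is forced to stay at the left end, is exactly what makes the $\mulrule L$ folding step in (b) legitimate. The only remaining fussiness is the small circularity in deriving $\fseq{A}{A}$ — needed as a premise of $\mulrule R^\foc$ when proving (a) — from (a) and (b), which is why (a) and (b), and hence $\fseq{A}{A}$ at subformulas, are carried through the induction together.
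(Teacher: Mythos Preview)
Your proof is correct, but your diagnosis of the ``naive'' approach is not: the paper makes the direct induction on $A$ go through without any detour. In the product case, after reducing to $\fseq{A_1,A_2,\Delta}{B}$ and invoking the IH for $A_1$, one must show that $\fseq{\Gamma_1^\irr}{A_1}$ implies $\fseq{\Gamma_1^\irr,A_2,\Delta}{B}$. You are right that the IH for $A_2$ does not give this directly. But the paper does not try to use it that way: instead it uses the IH for $A_2$ only to obtain $\fseq{A_2}{A_2}$, combines this with $\fseq{\Gamma_1^\irr}{A_1}$ via $\mulrule R^\foc$ to get $\fseq{\Gamma_1^\irr,A_2}{A_1\mul A_2}$, and then---since $(\Gamma_1^\irr,A_2)$ is still irreducible---applies the \emph{outer hypothesis} of the lemma at $\Gamma^\irr := (\Gamma_1^\irr,A_2)$ to conclude $\fseq{\Gamma_1^\irr,A_2,\Delta}{B}$. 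The trick you missed is that the universally quantified hypothesis can be instantiated at a context strictly larger than the one you started with.

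Your route via the canonical decomposition $\hat A$ is nonetheless a perfectly good alternative, and in fact $\hat A$ is exactly the maximal decomposition $\toC{A}$ that the paper introduces later (Proposition~\ref{prop:decomp}); your clauses (a) and (b) are essentially Propositions~\ref{prop:decomp} and~\ref{prop:decompinv} proved from scratch. So you have effectively inverted the paper's order of development: it derives the deduction lemma abstractly and later uses it to establish properties of $\toC{A}$, whereas you build $\toC{A}$ first and read off the deduction lemma as a corollary. The paper's argument is shorter and avoids introducing the auxiliary object, but yours has the virtue of making explicit the single witness $\hat A$ at which the hypothesis is actually used.
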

\begin{proof}
By induction on the formula $A$:
\begin{itemize}
\item (Case $A = p$): Immediate by assumption, taking $\Gamma^\irr = p$ and $\fseq{p}{p}$ derived by $id^\atm$.
\item (Case $A = A_1\mul A_2$): 
We assume that $\fseq{\Gamma^\irr}{A_1\mul A_2}$ implies $\fseq{\Gamma^\irr,\Delta}{B}$ for all $\Gamma^\irr$, and we need to show that $\fseq{A_1\mul A_2,\Delta}{B}$.
By composing with the $\mulrule L$ rule,
$$
\infer[\mulrule L]{\tseq{A_1\mul A_2,\Delta}{B}}{\tseq{A_1,A_2,\Delta}{B}}
$$
we reduce the problem to showing $\fseq{A_1,A_2,\Delta}{B}$, and by the i.h. on $A_1$ it suffices to show that $\fseq{\Gamma_1^\irr}{A_1}$ implies $\fseq{\Gamma^\irr_1,A_2,\Delta}{B}$ for all contexts $\Gamma^\irr_1$.
Let $\deriv{D}_1 : \fseq{\Gamma^\irr_1}{A_1}$.
We can derive $\fseq{\Gamma^\irr_1,A_2}{A_1\mul A_2}$ by
$$
\deriv{D} \quad=\quad
\infer[\mulrule R]{\tseq{\Gamma^\irr_1,A_2}{A_1\mul A_2}}{
 \deduce{\tseq{\Gamma^\irr_1}{A_1}}{\deriv{D}_1} & \deduce{\tseq{A_2}{A_2}}{\deriv{D}_2}}
$$
where we apply the i.h. on $A_2$ to obtain $\deriv{D}_2$.
Finally, applying the assumption 
to $\deriv{D}$ (with $\Gamma^\irr = \Gamma^\irr_1,A_2$) we obtain the desired derivation of $\fseq{\Gamma_1^\irr,A_2,\Delta}{B}$.
\qedhere
\end{itemize}
\end{proof}
\noindent
\begin{lem}[Cut]
\label{lem:cut}
If $\fseq{\Theta}{A}$ and $\fseq{\Gamma,A,\Delta}{B}$ then $\fseq{\Gamma,\Theta,\Delta}{B}$.
\end{lem}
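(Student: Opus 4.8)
The plan is to prove the Cut lemma by a principal induction on the size of the cut formula $A$, with a subsidiary induction on the combined size of the two given focused derivations $\deriv{D}$ and $\deriv{E}$. The key structural fact is the \emph{rigidity} of focused derivations (Propositions~\ref{prop:seqclass} and~\ref{prop:equivfocus}): the last rule of a focused derivation is completely determined by the shape of its conclusion --- it is $\mulrule L$ when the conclusion is left-inverting, $\mulrule R^\foc$ when it is right-focusing, and $id^\atm$ when it is atomic. Accordingly the proof is a case analysis on the last rules of $\deriv{D}$ and $\deriv{E}$, with most cases simply permuting the cut upward through one of them.

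First I would dispose of the degenerate cases. If $\deriv{E}$ is an instance of $id^\atm$, then $\Gamma = \Delta = \cdot$ and $B = A$ is atomic, so $\deriv{D}$ itself already has the required conclusion; symmetrically, if $\deriv{D}$ is an instance of $id^\atm$, then $\Theta = A$ is atomic and $\deriv{E}$ is as required. Next comes the one delicate bookkeeping case: if $\Gamma$ is empty and $\deriv{D}$ ends in $\mulrule L$ --- equivalently, $\Theta$ is reducible, say $\Theta = C \mul D, \Theta'$ --- then I would permute the cut into the premise $\fseq{C,D,\Theta'}{A}$ of $\deriv{D}$, a recursive call with the same cut formula but a strictly smaller left derivation, obtaining $\fseq{C,D,\Theta',\Delta}{B}$, and then reapply $\mulrule L$. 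This reattachment is legitimate precisely because $\Gamma$ is empty, so that $C \mul D$ ends up at the left end of the context.

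With those cases out of the way, either $\Theta$ is irreducible or $\Gamma$ is nonempty, and I would case on the last rule of $\deriv{E} : \fseq{\Gamma,A,\Delta}{B}$. If it ends in $\mulrule L$ acting on the (non-atomic) leftmost formula of $\Gamma$, permute the cut into its premise --- a smaller $\deriv{E}$, same cut formula --- and reattach $\mulrule L$ to that still-leftmost, now decomposed, formula. If it ends in $\mulrule R^\foc$, so that $B = B_1 \mul B_2$ and $\Gamma,A,\Delta$ splits as an irreducible prefix followed by a suffix, then $A$ lies in the context of exactly one of the two premises; permute the cut into that premise and reattach $\mulrule R^\foc$, checking that its left premise stays irreducible --- immediate when the irreducible prefix is untouched, and otherwise because the new left context begins with either a nonempty $\Gamma$ (whose leftmost formula is atomic, the original prefix having been irreducible) or, when $\Gamma$ is empty, the now-irreducible $\Theta$. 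The one genuinely substantive case is the \emph{principal} one: $\deriv{E}$ ends in $\mulrule L$ decomposing $A = A_1 \mul A_2$ itself --- which forces $\Gamma$ to be empty --- and $\deriv{D}$ ends in $\mulrule R^\foc$, so that $\Theta$ splits as $\Theta_1,\Theta_2$ with $\Theta_1$ irreducible and with subderivations of $\fseq{\Theta_1}{A_1}$ and $\fseq{\Theta_2}{A_2}$. Here I would perform two successive cuts on the strictly smaller formulas $A_2$ and then $A_1$: cutting $\fseq{\Theta_2}{A_2}$ against the premise $\fseq{A_1,A_2,\Delta}{B}$ of $\deriv{E}$ gives $\fseq{A_1,\Theta_2,\Delta}{B}$, and cutting $\fseq{\Theta_1}{A_1}$ against that gives $\fseq{\Theta_1,\Theta_2,\Delta}{B} = \fseq{\Theta,\Delta}{B}$, both calls being legitimate since the cut formula has strictly decreased.

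I expect the main obstacle to be precisely the bookkeeping forced by the leftmost-formula restriction on $\mulrule L$: unlike Lambek's original left rule, ours cannot repackage a product sitting in the interior of a context, so the commutative cases must be arranged so that whatever product is being reassembled sits at the left end of its context, and the $\mulrule R^\foc$ case must always be presented with an irreducible left premise. This is why the argument has to peel a leading $\mulrule L$ off $\deriv{D}$ in the case $\Gamma = \cdot$ before doing anything else, and why the size of $\deriv{D}$ --- not merely that of $\deriv{E}$ --- has to appear in the termination measure. Once the induction is set up this way, and with the two-cut principal case in hand, the remaining verifications are routine.
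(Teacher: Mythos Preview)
Your argument is correct and follows essentially the same strategy as the paper's proof: a lexicographic induction on the cut formula and then on the derivations, with the principal case handled by two cuts on the strictly smaller subformulas $A_1$ and $A_2$. The only organizational difference is that the paper dispatches the atomic case $A = p$ in one stroke by invoking frontier preservation (Proposition~\ref{prop:frontier}) to conclude $\Theta = p$, whereas you achieve the same effect by peeling off the degenerate cases $\deriv{D} = id^\atm$, $\deriv{E} = id^\atm$, and ``$\Gamma$ empty with $\deriv{D}$ ending in $\mulrule L$'' up front; the paper instead nests that last case inside its analysis of $\deriv{E}$.
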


\begin{proof}
Let $\deriv{D} : \fseq{\Theta}{A}$ and $\deriv{E} : \fseq{\Gamma,A,\Delta}{B}$.
We proceed by a lexicographic induction, first on the cut formula $A$ and then on the pair of derivations $(\deriv{D},\deriv{E})$ (i.e., at each inductive step of the proof, either $A$ gets smaller, or it stays the same as one of $\deriv{D}$ or $\deriv{E}$ gets smaller while the other stays the same, cf.~\cite{Pfenning2000}).

In the case that $A = p$ we can apply the frontier preservation property (Prop.~\ref{prop:frontier}) to deduce that $\Theta = p$, so the cut is trivial and we just reuse the derivation $\deriv{E} : \fseq{\Gamma,p,\Delta}{B}$.
Otherwise we have $A = A_1\mul A_2$ for some $A_1,A_2$, and we proceed by case-analyzing the root rule of $\deriv{E}$:
\begin{itemize}
\item (Case $id^\atm$): Impossible since $A$ is non-atomic.
\item (Case $\mulrule R^\foc$): This case splits in two possibilities:
\begin{enumerate}
\item $\exists \Delta_1,\Delta_2$ such that $\Delta = \Delta_1,\Delta_2$ and
$$
\deriv{E}\quad=\quad\infer[\mulrule R]{\tseq{\Gamma^\irr,A,\Delta_1,\Delta_2}{B_1\mul B_2}}{\deduce{\tseq{\Gamma^\irr,A,\Delta_1}{B_1}}{\deriv{E}_1} & \deduce{\tseq{\Delta_2}{B_2}}{\deriv{E}_2}}
$$
\item $\exists \Gamma^\irr_1,\Gamma_2$ such that $\Gamma^\irr =\Gamma^\irr_1,\Gamma_2$ and
$$
\deriv{E}\quad=\quad\infer[\mulrule R]{\tseq{\Gamma^\irr_1,\Gamma_2,A,\Delta}{B_1\mul B_2}}{\deduce{\tseq{\Gamma^\irr_1}{B_1}}{\deriv{E}_1} & \deduce{\tseq{\Gamma_2,A,\Delta}{B_2}}{\deriv{E}_2}}
$$
\end{enumerate}
In the first case, we cut $\deriv{D}$ with $\deriv{E}_1$ to obtain $\fseq{\Gamma^\irr,\Theta,\Delta_1}{B_1}$, then recombine that with $\deriv{E}_2$ using $\mulrule R^\foc$ to obtain $\fseq{\Gamma^\irr,\Theta,\Delta_1,\Delta_2}{B_1\mul B_2}$.
The second case is similar.

\item (Case $\mulrule L$): This case splits into two possibilities:
\begin{enumerate}
\item $\exists C_1,C_2,\Gamma'$ such that $\Gamma = C_1\mul C_2,\Gamma'$ and
$$\deriv{E}\quad=\quad\infer[\mulrule L]{\tseq{C_1\mul C_2,\Gamma',A,\Delta}{B}}{\deduce{\tseq{C_1,C_2,\Gamma',A,\Delta}{B}}{\deriv{E}'}}$$
We cut $\deriv{D}$ into $\deriv{E}'$ and reapply the $\mulrule L$ rule.
\item $\Gamma = \cdot$ and
$$\deriv{E}\quad=\quad\infer[\mulrule L]{\tseq{A_1\mul A_2,\Delta}{B}}{\deduce{\tseq{A_1,A_2,\Delta}{B}}{\deriv{E}'}}$$
Now we need to analyze the root rule of $\deriv{D}$:
\begin{itemize}
\item (Case $\mulrule L$): $\exists C_1,C_2,\Theta'$ s.t. $\Theta = C_1\mul C_2,\Theta'$ and
$$
\deriv{D}\quad=\quad\infer[\mulrule L]{\tseq{C_1\mul C_2,\Theta'}{A_1\mul A_2}}{\deduce{\tseq{C_1,C_2,\Theta'}{A_1\mul A_2}}{\deriv{D}'}}
$$
We cut $\deriv{D}'$ into $\deriv{E}$ and reapply the $\mulrule L$ rule.
\item (Case $id^\atm$): Impossible.
\item (Case $\mulrule R^\foc$): $\exists \Theta^\irr_1,\Theta_2$ s.t. $\Theta^\irr = \Theta^\irr_1,\Theta_2$ and
$$
\deriv{D}\quad=\quad\infer[\mulrule R]{\tseq{\Theta^\irr_1,\Theta_2}{A_1\mul A_2}}{\deduce{\tseq{\Theta^\irr_1}{A_1}}{\deriv{D}_1} & \deduce{\tseq{\Theta_2}{A_2}}{\deriv{D}_2}}
$$
We cut both $\deriv{D}_1$ and $\deriv{D}_2$ into $\deriv{E}'$ (the cuts are at smaller formulas so the order doesn't matter).
\qedhere
\end{itemize}
\end{enumerate}

\end{itemize}
\end{proof}
\noindent
\begin{lem}[$\mulrule R$ admissible]
\label{lem:*r}
If $\fseq{\Gamma}{A}$ and $\fseq{\Delta}{B}$ then $\fseq{\Gamma,\Delta}{A\mul B}$.
\end{lem}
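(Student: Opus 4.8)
The plan is to prove Lemma~\ref{lem:*r} by induction on a focused derivation $\deriv{D} : \fseq{\Gamma}{A}$, holding fixed a focused derivation $\deriv{E} : \fseq{\Delta}{B}$ and carrying $\Delta$, $A$, and $B$ along unchanged. The only genuine obstacle is that the focused right rule $\mulrule R^\foc$ carries the side condition that its left context be \emph{irreducible}, whereas the $\Gamma$ we are handed need not be; essentially all of the work is to discharge this mismatch by commuting any leading instances of $\mulrule L$ in $\deriv{D}$ outward past the intended application of $\mulrule R$.

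Concretely, I would split on whether $\Gamma$ is reducible or irreducible (Definition~\ref{def:focusing}); this is, in effect, also a case analysis on the root rule of $\deriv{D}$. If $\Gamma = \Gamma^\irr$ is irreducible, we are done immediately: $\tseq{\Gamma^\irr,\Delta}{A\mul B}$ is right-focusing, and a single application of $\mulrule R^\foc$ with premises $\deriv{D}$ and $\deriv{E}$ yields a focused derivation of it. If instead $\Gamma$ is reducible, write $\Gamma = C_1\mul C_2,\Gamma'$; then $\tseq{\Gamma}{A}$ is left-inverting, so since $\deriv{D}$ is focused it must end in an instance of $\mulrule L$ whose premise is a strictly smaller focused derivation $\deriv{D}'$ of $\tseq{C_1,C_2,\Gamma'}{A}$. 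Applying the induction hypothesis to $\deriv{D}'$ and the unchanged $\deriv{E}$ produces a focused derivation of $\tseq{C_1,C_2,\Gamma',\Delta}{A\mul B}$, and composing it with one more instance of $\mulrule L$ gives the desired focused derivation of $\tseq{\Gamma,\Delta}{A\mul B}$, recalling that $\Gamma = C_1\mul C_2,\Gamma'$. In both cases the constructed derivation is again focused — a left-inverting conclusion introduced by $\mulrule L$, or a right-focusing conclusion introduced by $\mulrule R^\foc$, over focused subderivations — so the claimed focused derivability indeed holds.

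With this lemma established, Theorem~\ref{thm:foc-completeness} follows by the standard replay argument: given an arbitrary derivation of $\tseq{\Gamma}{A}$ using $\mulrule L$, $\mulrule R$, $id$, and $cut$, induct on its structure, keeping each instance of $\mulrule L$ and replacing each instance of $id$, $cut$, and $\mulrule R$ by an appeal to Lemma~\ref{lem:focgenid}, Lemma~\ref{lem:cut}, and Lemma~\ref{lem:*r} respectively. I do not anticipate any difficulty in the proof of Lemma~\ref{lem:*r} itself: once the induction on $\deriv{D}$ is in place, the irreducible case is an immediate application of $\mulrule R^\foc$ and the reducible case is a one-line commutation past $\mulrule L$; the only point to get right is the observation that a reducible left context forces $\deriv{D}$ to begin with $\mulrule L$.
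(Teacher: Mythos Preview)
Your proof is correct and follows essentially the same approach as the paper: induction on $\deriv{D}$, with the irreducible case handled directly by $\mulrule R^\foc$ and the reducible case by commuting past the forced $\mulrule L$ and invoking the induction hypothesis. Your additional paragraph on how Theorem~\ref{thm:foc-completeness} follows also matches the paper's argument.
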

\begin{proof}
Let $\deriv{D} : \fseq{\Gamma}{A}$ and $\deriv{E} : \fseq{\Delta}{B}$.
We proceed by induction on $\deriv{D}$.
If $\Gamma$ is irreducible then we directly apply $\mulrule R^\foc$.
Otherwise, $\deriv{D}$ must be of the form 
$$
\deriv{D}\quad=\quad\infer[\mulrule L]{\tseq{C_1\mul C_2,\Gamma'}{A}}{\deduce{\tseq{C_1,C_2,\Gamma'}{A}}{\deriv{D}'}}
$$
so we apply the i.h. to $\deriv{D}'$ with $\deriv{E}$, and reapply the $\mulrule L$ rule.
\end{proof}
\begin{proof}[Proof of Theorem~\ref{thm:foc-completeness}]
An arbitrary closed derivation can be turned into a focused one by starting at the top of the derivation tree and using the above lemmas
to interpret any instance of $cut$ and of the unrestricted forms of $id$ and $\mulrule R$.
\end{proof}
\noindent
We mention here one simple application of the focusing completeness theorem.
\begin{prop}[Frontier invariance]
\label{prop:frontier-inv}
Let $\sigma$ be any function sending atoms to atoms.
Then $\tseq{\Gamma}{A}$ if and only if $\frontier{\Gamma} = \frontier{A}$ and $\tseq{\sigma\Gamma}{\sigma A}$.
\end{prop}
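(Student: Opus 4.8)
The forward implication is immediate from Proposition~\ref{prop:frontier}: if $\tseq{\Gamma}{A}$ then $\frontier{\Gamma} = \frontier{A}$ by frontier preservation and $\tseq{\sigma\Gamma}{\sigma A}$ by relabelling. So the only thing to prove is the backward implication: assuming $\frontier{\Gamma} = \frontier{A}$ and $\tseq{\sigma\Gamma}{\sigma A}$, derive $\tseq{\Gamma}{A}$.

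The plan is to pass through the focused calculus. By focusing completeness (Theorem~\ref{thm:foc-completeness}) we have $\fseq{\sigma\Gamma}{\sigma A}$, and conversely any focused derivation is a derivation (the rules $\mulrule R^\foc$ and $id^\atm$ being special cases of $\mulrule R$ and $id$), so it suffices to prove: \emph{if $\frontier{\Gamma} = \frontier{A}$ and $\fseq{\sigma\Gamma}{\sigma A}$, then $\fseq{\Gamma}{A}$}. I would prove this by induction on a focused derivation $\deriv{D} : \fseq{\sigma\Gamma}{\sigma A}$, analyzing its last rule, which by Proposition~\ref{prop:equivfocus} is one of $\mulrule L$, $\mulrule R^\foc$, or $id^\atm$. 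The crucial structural observation is that $\sigma$ acts homomorphically, hence sends atoms to atoms and products to products; so a formula $C$ is atomic if and only if $\sigma C$ is, a context $\Gamma$ is reducible if and only if $\sigma\Gamma$ is, and therefore (by Proposition~\ref{prop:seqclass}) the classification of $\tseq{\sigma\Gamma}{\sigma A}$ that determines the last rule of $\deriv{D}$ is simultaneously a classification of the target sequent $\tseq{\Gamma}{A}$.

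The three cases then go as follows. For $id^\atm$, $\sigma\Gamma$ and $\sigma A$ are equal atoms, so $\Gamma$ and $A$ are single atoms $a$ and $b$, and $\frontier{\Gamma} = \frontier{A}$ forces $a = b$, whence $\fseq{a}{a}$ by $id^\atm$. For $\mulrule L$, the leftmost formula of $\Gamma$ is some product $C_1\mul C_2$, say $\Gamma = C_1\mul C_2,\Gamma'$; the premise of $\deriv{D}$ is $\fseq{\sigma(C_1,C_2,\Gamma')}{\sigma A}$, and since $\frontier{(C_1,C_2,\Gamma')} = \frontier{\Gamma} = \frontier{A}$ the induction hypothesis yields $\fseq{C_1,C_2,\Gamma'}{A}$, and reapplying $\mulrule L$ gives $\fseq{\Gamma}{A}$. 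For $\mulrule R^\foc$, we have $A = A_1\mul A_2$ and the rule splits $\sigma\Gamma$ as an irreducible prefix followed by a suffix; pulling this split back through $\sigma$ produces a decomposition $\Gamma = \Gamma_1,\Gamma_2$ with $\Gamma_1$ irreducible and with premises $\fseq{\sigma\Gamma_1}{\sigma A_1}$ and $\fseq{\sigma\Gamma_2}{\sigma A_2}$. To apply the induction hypothesis to the two (strictly smaller) subderivations I need $\frontier{\Gamma_1} = \frontier{A_1}$ and $\frontier{\Gamma_2} = \frontier{A_2}$; these follow because frontier preservation applied to the premises gives $\sigma\frontier{\Gamma_1} = \sigma\frontier{A_1}$, so $\frontier{\Gamma_1}$ and $\frontier{A_1}$ have equal length, while $(\frontier{\Gamma_1},\frontier{\Gamma_2}) = \frontier{\Gamma} = \frontier{A} = (\frontier{A_1},\frontier{A_2})$ as lists, and two equal concatenations whose first parts have equal length must agree part-for-part. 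The induction hypothesis then gives $\fseq{\Gamma_1}{A_1}$ and $\fseq{\Gamma_2}{A_2}$, and $\mulrule R^\foc$ reassembles these into $\fseq{\Gamma}{A}$.

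I expect the $\mulrule R^\foc$ case to be the only real obstacle: it is the sole point where the frontier hypothesis carries weight, and the delicate step is checking that it descends to both subproblems, which uses precisely that $\sigma$ preserves the length of a frontier so that the ambient equality of frontiers can be split at the correct place. Note that no injectivity or surjectivity assumption on $\sigma$ is ever used — the frontier condition does all the work. One could also try to argue directly by induction on an unfocused derivation of $\tseq{\sigma\Gamma}{\sigma A}$, but the $cut$ rule then introduces a cut formula with no reason to lie in the image of $\sigma$, which makes the frontier bookkeeping awkward; routing through focusing completeness sidesteps this entirely.
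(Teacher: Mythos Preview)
Your proof is correct and follows essentially the same route as the paper: the forward direction by Proposition~\ref{prop:frontier}, the backward direction by induction on focused derivations (via Theorem~\ref{thm:foc-completeness}), with the only substantive case being $\mulrule R^\foc$, handled by using frontier preservation on the premises to extract the length information needed to split the global frontier equality. Your additional remarks---that $\sigma$ preserves the left-inverting/right-focusing/atomic classification, and that a direct induction on unfocused derivations founders on $cut$---make explicit exactly the points the paper leaves implicit or mentions only in the subsequent discussion.
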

\begin{proof}
The forward direction is Prop.~\ref{prop:frontier}(1).
For the backward direction we use induction on focused derivations, which is justified by Theorem~\ref{thm:foc-completeness}.
The only interesting case is $\mulrule R^\foc$, where we can assume $\frontier{(\Gamma,\Delta)} = \frontier{(A\mul B)}$ and $\fseq{\sigma\Gamma}{\sigma A}$ ($\sigma\Gamma$ irreducible) and $\fseq{\sigma\Delta}{\sigma B}$.
By Prop.~\ref{prop:frontier}(1) we have $\frontier{\sigma\Gamma} = \frontier{\sigma A}$ and $\frontier{\sigma\Delta} = \frontier{\sigma B}$, but then elementary properties of lists imply that $\frontier{\Gamma} = \frontier{A}$ and $\frontier{\Delta} = \frontier{B}$, from which $\fseq{\Gamma}{A}$ ($\Gamma$ irreducible) and $\fseq{\Delta}{B}$ follow by the induction hypothesis, hence $\fseq{\Gamma,\Delta}{A\mul B}$.
\end{proof}
\noindent
If we let $\sigma = \textunderscore \mapsto p$ be any constant relabelling function, then speaking in the language of the introduction, Proposition~\ref{prop:frontier-inv} says that to check that two fully-bracketed words (a.k.a., formulas) are related, it suffices to check that their frontiers are equal and that the unlabelled binary trees describing their underlying multiplicative structure are related.
In other words, the restriction of the Tamari order to formulas $A$ such that $\frontier{A} = \Omega$, for any fixed frontier $\Omega$ of length $n+1$, defines a poset which is isomorphic to the poset $\Tam{n}$ of unlabelled binary trees under the right rotation order (remember that a binary tree with $n$ internal nodes has $n+1$ leaves).
Although this fact is intuitively obvious, trying to prove it directly by induction on general derivations in the sequent calculus fails, because in the case of the $cut$ rule we cannot assume anything about the frontier of the cut formula $A$.

\subsection{The coherence theorem}
\label{sec:seqcal:coherence}

We now come to our main result justifying the definition of the sequent calculus:
\begin{thm}[Coherence]
\label{thm:coherence}
Every derivable sequent has exactly one focused derivation.
\end{thm}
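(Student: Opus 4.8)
The existence half of the statement is precisely the focusing completeness theorem (Theorem~\ref{thm:foc-completeness}): every derivable sequent has at least one focused derivation. So the plan is to establish the uniqueness half, that a derivable sequent has \emph{at most} one focused derivation. I would prove this by induction on the size of the sequent $\tseq{\Gamma}{A}$ --- say, the total number of product nodes occurring in $\Gamma$ and in $A$ --- showing that any two focused derivations $\deriv{D}_1$ and $\deriv{D}_2$ of it must coincide as labelled trees.

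The starting observation is that in a focused derivation the rule applied at any node is \emph{forced} by the syntactic shape of the sequent labelling that node. By Proposition~\ref{prop:seqclass} every sequent is left-inverting, right-focusing, or atomic, and by the definition of focused derivation (equivalently, Proposition~\ref{prop:equivfocus}) these can only arise as the conclusions of $\mulrule L$, $\mulrule R^\foc$, and $id^\atm$ respectively. Hence $\deriv{D}_1$ and $\deriv{D}_2$ have the same root rule, and it remains only to check that their immediate subderivations agree. For $id^\atm$ there are no subderivations, so we are done. For $\mulrule L$ with conclusion $\tseq{C\mul D,\Delta}{A}$, the single premise is forced to be $\tseq{C,D,\Delta}{A}$, a strictly smaller sequent, so the induction hypothesis applies directly and the two subderivations coincide.

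The crux is the $\mulrule R^\foc$ case: the conclusion is a right-focusing sequent $\tseq{\Xi}{A_1\mul A_2}$ with $\Xi$ irreducible, and the only freedom in the rule instance is the choice of a splitting $\Xi = \Gamma',\Delta$ with $\Gamma'$ irreducible and premises $\tseq{\Gamma'}{A_1}$ and $\tseq{\Delta}{A_2}$. I would show this splitting is unique. Since both premises are derivable, frontier preservation (Proposition~\ref{prop:frontier}(1)) forces $\frontier{\Gamma'} = \frontier{A_1}$ and $\frontier{\Delta} = \frontier{A_2}$; in particular $\Gamma'$ must be a prefix of the list $\Xi$ whose frontier has the same length as $\frontier{A_1}$. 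But every formula has a nonempty frontier, so the successive prefixes of $\Xi$ have strictly increasing frontier lengths, and therefore at most one prefix can have a prescribed frontier length --- and at least one does, namely the one exhibited by $\deriv{D}_1$. Consequently $\deriv{D}_1$ and $\deriv{D}_2$ perform the same split, their two pairs of premises are literally the same (and strictly smaller) sequents, and the induction hypothesis yields equality of the corresponding subderivations, whence $\deriv{D}_1 = \deriv{D}_2$.

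The only genuine obstacle is thus the uniqueness of the context split in the $\mulrule R^\foc$ case; everything else is a matter of observing that a focused derivation is rigidly determined, node by node, by the syntactic form of its conclusion. It is worth noting that this is exactly the point where the leftmost-product restriction $\mulrule L$ (as opposed to Lambek's unrestricted rule $\mulrule L^\orig$) and the frontier-preservation property it makes possible do the essential work: with Lambek's rule the position at which a product is decomposed would itself be an unconstrained choice, and uniqueness would fail.
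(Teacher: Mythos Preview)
Your proof is correct and follows essentially the same route as the paper's: existence by focusing completeness (Theorem~\ref{thm:foc-completeness}), uniqueness by structural induction with the three-way case split of Proposition~\ref{prop:seqclass}, and frontier preservation to pin down the context splitting in the $\mulrule R^\foc$ case. The only difference is cosmetic: you use the total number of product nodes as a well-founded measure, whereas the paper phrases the induction via the multiset ordering on the sizes of the formulas in the sequent --- your simpler sum works just as well here, since each rule strictly decreases it.
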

\noindent
Coherence is a direct consequence of focusing completeness and the following lemma:
\begin{lem}
\label{lem:atmost1}
For any context $\Gamma$ and formula $A$, there is at most one focused derivation of $\tseq{\Gamma}{A}$.
\end{lem}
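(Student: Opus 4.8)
The plan is to prove Lemma~\ref{lem:atmost1} by induction on the context $\Gamma$ and formula $A$, exploiting the trichotomy of Proposition~\ref{prop:seqclass} (every sequent is left-inverting, right-focusing, or atomic) together with the syntactic characterization of focused derivations in Proposition~\ref{prop:equivfocus}. The key observation is that, in a focused derivation, the class of the conclusion sequent completely determines which rule was applied at the root, so the only freedom is in how the premises are split; I must show that this split is forced as well.

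\medskip

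First I would handle the atomic case: if $\tseq{\Gamma}{A}$ is atomic, then by Proposition~\ref{prop:equivfocus} the root rule must be $id^\atm$, which forces $\Gamma = p$ and $A = p$ for the same atom $p$, and $id^\atm$ has no premises, so there is exactly one focused derivation (when $\frontier{\Gamma} = \frontier{A}$; zero otherwise). Second, the left-inverting case: if $\Gamma$ is reducible, say $\Gamma = C_1 \mul C_2, \Gamma'$, then the root rule must be $\mulrule L$, and there is only one way to apply it, with unique premise $\tseq{C_1, C_2, \Gamma'}{A}$; since that premise has strictly smaller total size (the leftmost product has been broken apart), the induction hypothesis gives at most one focused derivation of it, hence at most one of the conclusion. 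Third, the right-focusing case, where $\Gamma = \Gamma^\irr$ is irreducible and $A = A_1 \mul A_2$: here the root rule must be $\mulrule R^\foc$, but a priori the context could be split as $\Gamma^\irr = \Gamma_L, \Gamma_R$ in several ways, giving premises $\tseq{\Gamma_L}{A_1}$ (with $\Gamma_L$ irreducible) and $\tseq{\Gamma_R}{A_2}$.

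\medskip

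The main obstacle is precisely showing that this split of $\Gamma^\irr$ into $\Gamma_L, \Gamma_R$ is uniquely determined. This is where I would invoke the frontier preservation property, Proposition~\ref{prop:frontier}(1): any focused (indeed any) derivation of $\tseq{\Gamma_L}{A_1}$ forces $\frontier{\Gamma_L} = \frontier{A_1}$, and likewise $\frontier{\Gamma_R} = \frontier{A_2}$. Since $\frontier{\Gamma^\irr} = \frontier{\Gamma_L}, \frontier{\Gamma_R}$ as lists and $\frontier{A_1}$ has a fixed length, the position at which $\Gamma^\irr$ must be cut is pinned down — but one must be slightly careful, because a single formula $C$ in $\Gamma^\irr$ could in principle have its frontier straddle the boundary between $\frontier{A_1}$ and $\frontier{A_2}$. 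To rule this out, note that formulas in a context are never split by $\mulrule R^\foc$: the rule partitions the \emph{list} $\Gamma^\irr$, not the frontier, so the candidate splits are indexed by positions $1 \le k < |\Gamma^\irr|$ (with $\Gamma_L$ nonempty since it must be irreducible, and $\Gamma_R$ the remainder), and among these at most one makes $\frontier{\Gamma_L}$ have the correct length $|\frontier{A_1}|$. Hence the split is unique, both premises are strictly smaller (each has fewer formulas, or the same formulas with $A$ replaced by a proper subformula), and the induction hypothesis applies to each, yielding at most one focused derivation overall.

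\medskip

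With Lemma~\ref{lem:atmost1} in hand, Theorem~\ref{thm:coherence} follows immediately: Theorem~\ref{thm:foc-completeness} (focusing completeness) guarantees that every derivable sequent has \emph{at least} one focused derivation, and the lemma guarantees \emph{at most} one, so exactly one. I expect the write-up of the lemma to be short, with the only subtlety being the careful bookkeeping of list lengths in the right-focusing case; everything else is a routine structural induction driven by Propositions~\ref{prop:seqclass} and~\ref{prop:equivfocus}.
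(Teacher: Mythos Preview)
Your proposal is correct and follows essentially the same approach as the paper: both proofs use the trichotomy of Proposition~\ref{prop:seqclass} to determine the root rule uniquely, and both invoke frontier preservation (Proposition~\ref{prop:frontier}(1)) to pin down the context split in the right-focusing case. The only cosmetic difference is the induction measure: the paper makes the well-foundedness explicit via the multiset of formula sizes, whereas you argue more informally in terms of total size --- either works, though you might tighten the phrase ``induction on the context $\Gamma$ and formula $A$'' by stating a single scalar measure (e.g.\ $\sum_i \sz{A_i} + \sz{A}$) up front.
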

\begin{proof}
We proceed by a well-founded induction on sequents, which can be reduced to multiset induction as follows.
Define the \defn{size} $\sz{A}$ of a formula $A$ by
$\sz{A\mul B} = 1+\sz{A}+\sz{B}$
and $\sz{p} = 0$.
(That is, $\sz{A}$ counts the number of multiplication operations occurring in $A$.)
Then any sequent $\tseq{A_1,\dots,A_n}{B}$ induces a multiset of sizes
$\left(\biguplus_{i=1}^n\sz{A_i}\right) \uplus \sz{B}$, 
and at each step of our induction this multiset will decrease in the multiset ordering.
There are three cases:
\begin{itemize}
\item (A left-inverting sequent $\tseq{A\mul B,\Delta}{C}$): Any focused derivation must end in $\mulrule L$, so we apply the i.h. to $\tseq{A,B,\Delta}{C}$.
\item (A right-focusing sequent $\tseq{\Gamma^\irr}{A\mul B}$): Any focused derivation must end in $\mulrule R$, and decide some splitting of the context into contiguous pieces $\Gamma_1^\irr$ and $\Delta_2$.
However, $\Gamma_1^\irr$ and $\Delta_2$ are uniquely determined by frontier preservation ($\frontier{\Gamma_1^\irr} = \frontier{A}$ and $\frontier{\Delta_2} = \frontier{B}$) and the equation $\Gamma^\irr = \Gamma^\irr_1,\Delta_2$.
So, we apply the i.h. to $\tseq{\Gamma_1^\irr}{A}$ and $\tseq{\Delta_2}{B}$.
\item (An atomic sequent $\tseq{\Gamma^\irr}{p}$): The sequent has one focused derivation if $\Gamma^\irr = p$, and otherwise it has none.
  \qedhere
\end{itemize}
\end{proof}
\begin{proof}[Proof of Theorem~\ref{thm:coherence}]
By Theorem~\ref{thm:foc-completeness} and Lemma~\ref{lem:atmost1}.
\end{proof}

\subsection{Notes and related work}
\label{sec:seqcal:notes}

The coherence theorem says in a sense that focused derivations provide a canonical representation for intervals of the Tamari order.
Although the representations are quite different, in this respect it is very roughly comparable to the ``unicity of normal chains'' that was established by Tamari and Friedman as part of their original proof of the lattice property of $\Tam{n}$ \cite{FrTa67}.
A natural question is whether the sequent calculus can be used to better understand and further simplify the proof of the lattice property: we turn to this question in Section~\ref{sec:lattice} below.

An easy observation is that one obtains the dual Tamari order (cf.~Footnote~\ref{foot1}) via a dual restriction of Lambek's original rules:
$$
\infer[\mulop L]{\tseq{\Gamma,A\mulop B}{C}}{\tseq{\Gamma,A,B}{C}} \qquad
\infer[\mulop R]{\tseq{\Gamma,\Delta}{A\mulop B}}{\tseq{\Gamma}{A} & \tseq{\Delta}{B}}
$$
These two forms of biased products cannot be combined naively, however, or one obtains a system without cut-elimination, as the following derivation shows:
$$
\infer[cut]{\tseq{p\mulop q,r}{p\mul (q\mul r)}}{
\infer[\mulop L]{\tseq{p\mulop q}{p\mul q}}{\infer[\mulrule R]{\tseq{p,q}{p\mul q}}{\infer{\tseq{p}{p}}{} & \infer{\tseq{q}{q}}{}}} &
\infer[\mulrule L]{\tseq{p\mul q,r}{p\mul (q\mul r)}}{
 \deduce{\tseq{p,q,r}{p\mul (q\mul r)}}{\vdots}}}
$$
On the other hand, the \emph{left}-biased product $A\mul B$ can be naturally combined with Lambek's \emph{right} division operation $B/A$,
$$
\infer{\tseq{B/A,\Gamma,\Delta}{C}}{\tseq{\Gamma}{A} & \tseq{B,\Delta}{C}}
\qquad
\infer{\tseq{\Gamma}{B/A}}{\tseq{\Gamma,A}{B}}
$$
while the \emph{right}-biased product $A\mulop B$ can be naturally combined with \emph{left} division $A\backslash B$.
$$
\infer{\tseq{\Gamma,\Delta,A\backslash B}{C}}{\tseq{\Gamma,B}{C} & \tseq{\Delta}{A}}
\qquad
\infer{\tseq{\Delta}{A\backslash B}}{\tseq{A,\Delta}{B}}
$$
To the best of my knowledge, these simple variations of Lambek's original rules have not appeared previously in the literature with a connection to the Tamari order.
Since circulating an earlier version of this paper \cite{Z2017tamari}, however, I learned from Jason Reed that he briefly considered precisely these rules in an unpublished note on ``queue logic'' \cite{Reed09queue}.
Reed did not remark the link with semi-associativity, but rather was originally interested in the apparent failure of queue logic to satisfy Nuel Belnap's \emph{display property}.

The sequent calculus can also be naturally extended with rules for a (left-biased) unit,
$$
\infer[IL]{\tseq{I,\Delta}{C}}{\tseq{\Delta}{C}} \qquad
\infer[IR]{\tseq{\cdot}{I}}{}
$$
although there are some subtleties involving focusing.
For one, the naive statement of the coherence theorem fails, as shown by the following example (due to Tarmo Uustalu) of two different focused derivations of the same sequent:
$$
\infer[\mulrule R]{\tseq{p,I,q}{(p\mul I)\mul q}}{
 \infer[\mulrule R]{\tseq{p,I}{p\mul I}}{\infer{\tseq{p}{p}}{} & \infer[IL]{\tseq{I}{I}}{\infer[IR]{\tseq{\cdot}{I}}{}}} &
 \infer{\tseq{q}{q}}{}}
\qquad
\infer[\mulrule R]{\tseq{p,I,q}{(p\mul I)\mul q}}{
 \infer[\mulrule R]{\tseq{p}{p\mul I}}{\infer{\tseq{p}{p}}{} & \infer[IR]{\tseq{\cdot}{I}}{}}
 &
 \infer[IL]{\tseq{I,q}{q}}{\infer{\tseq{q}{q}}{}}}
$$
However, this does not rule out a less naive formulation of coherence.

The name for Theorem~\ref{thm:coherence} is inspired by the terminology from category theory and Mac~Lane's coherence theorem for monoidal categories \cite{MacLane1963}.
Laplaza \cite{Laplaza1972} extended Mac~Lane's coherence theorem to the situation where there is no monoidal unit and the \emph{associator} $\alpha_{A,B,C} : (A \otimes B)\otimes C \to A \otimes (B \otimes C)$ is only a natural transformation rather than an isomorphism -- in other words, where the tensor product is only semi-associative.
Recently, Szlach\'anyi has isolated the notion of a \emph{skew-monoidal category} \cite{Szlachanyi2012}, which combines such a semi-associative tensor product together with a unit $I$ equipped with a pair of natural transformations $I\otimes A \to A$ and $A \to A \otimes I$ satisfying some axioms.
Skew-monoidal categories have been investigated actively in the past few years, including questions of coherence \cite{Uustalu2014} as well as their relation to the Tamari order \cite{LackStreet2014}.
Indeed, in recent work independent of ours, Bourke and Lack have arrived at a very closely related multicategorical analysis of skew-monoidal categories \cite{BourkeLack2017skewmulti}.
The reader familiar with the connection (pioneered by Lambek \cite{Lambek1969}) between multicategories and sequent calculus will recognize that the restricted rules $\mulrule L$ and $\mulrule R$ turn the sequent calculus into a ``left-representable'' multicategory, a natural variation of the well-known concept of a representable multicategory \cite{Hermida2000,LeinsterHOHC} considered in Bourke and Lack's paper \cite{BourkeLack2017skewmulti}.

Interestingly, Lambek, who originally presented his ``syntactic calculus'' as a tool for mathematical linguistics, also introduced a fully non-associative version \cite{Lambek1961} (cf.~\cite[Ch.~4]{MootRetoreLCG}).
One may wonder whether there are any linguistic motivations for semi-associativity, as an intermediate between these two extremes.

\section{A new proof for the lattice property of the Tamari order}
\label{sec:lattice}

In this section we explain how the focusing property can be used to give a new and more conceptual proof of Tamari's original observation that the posets $\Tam{n}$ of fully-bracketed words over a fixed sequence of $n+1$ letters ordered by a semi-associative law
are actually lattices.\footnote{This observation was already made in Tamari's thesis \cite{Tamari1951phd}, but the first published proof appeared in a paper with his student Haya Friedman \cite{FrTa67} (in a note at the end of the introduction to \cite{Tamari1964}, he mentions that this work was carried out by Friedman in 1958) and a simplified proof in a later paper with another student Samuel Huang \cite{HuTa72}.
For more recent proofs, see \cite{Knuth1993TamariTalk}, \cite[\S4]{MelliesART6} and \cite{CaspardSantocanaleWehrung2016}.}
We begin by extending the Tamari order to an ordering on \emph{contexts} (i.e., to lists of fully-bracketed words).
\begin{defi}
\label{defn:suborder}
The \defn{substitution order} on contexts is the least relation $\Gamma \le \Theta$ such that:
(1) $\tseq{\Gamma}{A}$ implies $\Gamma \le A$; (2) $\cdot \le \cdot$; and (3) if $\Gamma_1 \le \Gamma_2$ and $\Theta_1 \le \Theta_2$ then $(\Gamma_1,\Theta_1) \le (\Gamma_2,\Theta_2)$. 
\end{defi}
\noindent
The substitution order generated by the sequent calculus corresponds to the standard notion of the free monoidal category generated by a (posetal) multicategory \cite{LeinsterHOHC}, and is easily seen to be reflexive and transitive (inheriting these properties from $id$ and $cut$), and equipped with a contravariant action on derivable sequents (corresponding to a ``multi-cut'' rule).
\begin{prop}
\label{defn:multicut}
If $\Gamma_1 \le \Gamma_2$ and $\tseq{\Gamma_2}{A}$ then $\tseq{\Gamma_1}{A}$.
\end{prop}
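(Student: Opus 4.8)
The plan is to derive Proposition~\ref{defn:multicut} from a slightly more general statement — a \emph{multi-cut} principle — namely: for all contexts $\Phi,\Psi$, if $\Gamma_1 \le \Gamma_2$ and $\tseq{\Phi,\Gamma_2,\Psi}{B}$, then $\tseq{\Phi,\Gamma_1,\Psi}{B}$. Taking $\Phi = \Psi = \cdot$ recovers the proposition. This generalization is not a luxury: a direct attempt to prove the ungeneralized statement by induction on the evidence for $\Gamma_1 \le \Gamma_2$ breaks down at the concatenation clause, where $\Gamma_2$ splits as $\Gamma_2^a,\Gamma_2^b$ and one must rewrite $\Gamma_2^a$ while it sits inside a larger context — something the ungeneralized induction hypothesis (which only talks about a context alone on the left of an arrow) cannot do.

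Since the substitution order is by Definition~\ref{defn:suborder} the \emph{least} relation closed under clauses (1)--(3), it carries a structural induction principle on the derivation of $\Gamma_1 \le \Gamma_2$, and I would induct on that. The clause-(1) base case has $\Gamma_2 = A'$ a single formula together with a derivation of $\tseq{\Gamma_1}{A'}$; then $\tseq{\Phi,\Gamma_1,\Psi}{B}$ is literally an instance of $cut$ applied to $\tseq{\Gamma_1}{A'}$ and $\tseq{\Phi,A',\Psi}{B}$. The clause-(2) base case has $\Gamma_1 = \Gamma_2 = \cdot$, so the hypothesis and conclusion are the same sequent and there is nothing to prove.

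The clause-(3) case is the substantive one: $\Gamma_1 = (\Gamma_1^a,\Gamma_1^b)$, $\Gamma_2 = (\Gamma_2^a,\Gamma_2^b)$ with sub-derivations of $\Gamma_1^a \le \Gamma_2^a$ and $\Gamma_1^b \le \Gamma_2^b$. From $\tseq{\Phi,\Gamma_2^a,\Gamma_2^b,\Psi}{B}$ I would first apply the induction hypothesis for $\Gamma_1^a \le \Gamma_2^a$, viewing the context as $\Phi,\Gamma_2^a,(\Gamma_2^b,\Psi)$, to get $\tseq{\Phi,\Gamma_1^a,\Gamma_2^b,\Psi}{B}$; then apply it for $\Gamma_1^b \le \Gamma_2^b$, viewing the context as $(\Phi,\Gamma_1^a),\Gamma_2^b,\Psi$, to get $\tseq{\Phi,\Gamma_1^a,\Gamma_1^b,\Psi}{B}$ as desired. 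Both appeals are to strict sub-derivations, so the induction is well-founded.

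I do not expect a genuine obstacle here: once the passive contexts $\Phi,\Psi$ are threaded through the statement, the whole argument is just an iterated use of $cut$ along the list structure of contexts. The one point requiring care is precisely this generalization — recognizing that it is needed, and that it is exactly what makes the concatenation clause close. It also makes visible the remark preceding the proposition that this is a ``multi-cut'' rule, i.e., the contravariant action of the substitution order on derivable sequents.
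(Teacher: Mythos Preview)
Your argument is correct. The paper does not actually spell out a proof of this proposition: it merely remarks that the substitution order is ``equipped with a contravariant action on derivable sequents (corresponding to a `multi-cut' rule)'' and leaves it at that. Your generalized statement with passive contexts $\Phi,\Psi$ is precisely the multi-cut principle the paper alludes to, and your induction on the clause-(1)/(2)/(3) structure of Definition~\ref{defn:suborder} is the standard way to establish it; in particular you are right that the ungeneralized statement does not close under clause~(3), which is why the $\Phi,\Psi$ threading is essential.
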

\noindent
Now, let $\Frm$ stand for the poset of arbitrary formulas (= fully-bracketed words) under the Tamari order, and $\Ctx$ for the poset of arbitrary contexts under the generated substitution order.
We write $\Frm_\Omega$ and $\Ctx_\Omega$ to denote the restriction of these posets to a fixed frontier $\Omega$.
As already remarked (Prop.~\ref{prop:frontier-inv}), for any $\Omega$ of length $n+1$, $\Frm_\Omega$ is isomorphic to the poset $\Tam{n}$ of unlabelled binary trees with $n$ inner nodes under the right rotation order.
Similarly, for any $\Omega$ of length $n+1$, $\Ctx_\Omega$ is isomorphic to a poset of ``forests'' of unlabelled binary trees with $n+1$ leaves, which we denote by $\TamC{n}$ (note the different notation for the index, since we count inner nodes in trees but leaves in forests).

Our aim is to prove that all of the posets $\Frm_\Omega$ and $\Ctx_\Omega$ (and hence $\Tam{n}$ and $\TamC{n}$) are lattices.
The proof relies on two key observations:
\begin{enumerate}
\item The evident embedding $i : \Tam{n} \to \TamC{n}$ sending a binary tree with $n$ internal nodes to the singleton forest with $n+1$ leaves forms the right end of an \emph{adjoint triple,} with a left adjoint given by the left-associated product $\phi$, which in turn has a further left adjoint corresponding to the \emph{maximal decomposition} of a binary tree.
For completely general reasons, this allows us to reduce a join of trees to a join of forests.
\item Any forest with $n+1$ leaves divided among $k$ trees implicitly contains a \emph{composition} (i.e., ordered partition) of $n+1$ into $k$ parts, inducing a (surjective) monotone function $\alpha : \TamC{n} \to \Comp{n}$ from the poset of forests to the \emph{lattice of compositions ordered by refinement.}
In particular, this will allow us to reduce a join of forests in $\TamC{n}$ to a join of compositions in $\Comp{n} \cong 2^n$, together with joins of trees in $\Tam{m}$ for $m < n$.
\end{enumerate}
\begin{defi}
\label{defn:maxdecomp}
We say that an irreducible context $\Gamma^\irr$ is a \defn{maximal decomposition} of a formula $A$ if $\tseq{\Gamma^\irr}{A}$, and for any other $\Theta^\irr$, $\tseq{\Theta^\irr}{A}$ implies $\Theta^\irr \le \Gamma^\irr$.
\end{defi}
\begin{prop}
\label{prop:decompinv}
If $\Gamma^\irr$ is a maximal decomposition of $A$, then $\tseq{A,\Delta}{B}$ if and only if $\tseq{\Gamma^\irr,\Delta}{B}$.
\end{prop}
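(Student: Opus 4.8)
The proof splits into the two directions of the biconditional, and only the backward implication requires real work; the forward one is immediate. For the forward direction, suppose $\tseq{A,\Delta}{B}$. Since $\Gamma^\irr$ is a maximal decomposition of $A$ we have $\tseq{\Gamma^\irr}{A}$ by Definition~\ref{defn:maxdecomp}, hence $\Gamma^\irr \le A$ in the substitution order by clause~(1) of Definition~\ref{defn:suborder}, and therefore $(\Gamma^\irr,\Delta)\le(A,\Delta)$ by clause~(3) (using reflexivity of the substitution order on $\Delta$). Proposition~\ref{defn:multicut} then turns $\tseq{A,\Delta}{B}$ into $\tseq{\Gamma^\irr,\Delta}{B}$. (Equivalently, one simply cuts $\tseq{\Gamma^\irr}{A}$ against $\tseq{A,\Delta}{B}$, with empty left context and $\Delta$ to the right of the cut formula.)

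For the backward direction the plan is to route through the \emph{full left decomposition} of $A$: the context $\Lambda_A$ defined recursively by $\Lambda_p \defeq p$ and $\Lambda_{A_1\mul A_2}\defeq \Lambda_{A_1},A_2$. I would first establish, each by a short induction on the formula $A$, that: (i) $\Lambda_A$ is irreducible, since its leftmost formula is that of $\Lambda_{A_1}$ in the non-atomic case and is $p$ in the atomic case; and (ii) for all contexts $\Delta$ and formulas $B$, $\tseq{A,\Delta}{B}$ holds if and only if $\tseq{\Lambda_A,\Delta}{B}$ holds. For (ii) the atomic case is trivial, and in the case $A=A_1\mul A_2$ one uses the invertibility of $\mulrule L$: the passage from $\tseq{A_1\mul A_2,\Delta}{B}$ to $\tseq{A_1,A_2,\Delta}{B}$ is obtained by cutting the easy derivation of $\tseq{A_1,A_2}{A_1\mul A_2}$ (one $\mulrule R$ over two instances of $id$) against the hypothesis, while the converse is a single application of $\mulrule L$; one then appeals to the induction hypothesis at $A_1$ with context $(A_2,\Delta)$, using $\Lambda_{A_1},A_2 = \Lambda_{A_1\mul A_2}$. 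Instantiating (ii) at $\Delta=\cdot$, $B=A$ and combining with $id$ gives in particular $\tseq{\Lambda_A}{A}$.

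Granting these facts, suppose $\tseq{\Gamma^\irr,\Delta}{B}$. By (i) and the last sentence above, $\Lambda_A$ is an irreducible context with $\tseq{\Lambda_A}{A}$, so the maximality clause of Definition~\ref{defn:maxdecomp} forces $\Lambda_A \le \Gamma^\irr$ in the substitution order, whence $(\Lambda_A,\Delta)\le(\Gamma^\irr,\Delta)$ by clause~(3) of Definition~\ref{defn:suborder}, and Proposition~\ref{defn:multicut} yields $\tseq{\Lambda_A,\Delta}{B}$. Applying (ii) once more converts this to $\tseq{A,\Delta}{B}$, which completes the proof.

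I expect the only delicate point to be the formulation of (ii): invertibility of $\mulrule L$ is where the $cut$ rule enters (alternatively one could invoke the focusing analysis behind Lemma~\ref{lem:atmost1}, namely that any focused derivation of a left-inverting sequent must end in $\mulrule L$), and the induction must be run on the \emph{formula} $A$ rather than on derivations, so that it meshes correctly with the maximal-decomposition argument. Everything else is routine manipulation of the substitution order together with the admissible rules and soundness/completeness established above. (An alternative packaging of the backward direction is available once one knows that a maximal decomposition $\Gamma^\irr$ of $A$ satisfies $\toF{\Gamma^\irr}=A$: one then folds $\tseq{\Gamma^\irr,\Delta}{B}$ back up into $\tseq{\toF{\Gamma^\irr},\Delta}{B}$ by repeated application of $\mulrule L$; the route through $\Lambda_A$ has the advantage of not relying on that identity.)
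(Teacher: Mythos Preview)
Your proof is correct, but it takes a different route from the paper's one-line argument. The paper dispatches the backward direction by appealing directly to the Deduction Lemma (Lemma~\ref{lem:focgenid}): to show $\tseq{A,\Delta}{B}$ it suffices (by that lemma together with focusing completeness) to show that every irreducible $\Theta^\irr$ with $\tseq{\Theta^\irr}{A}$ satisfies $\tseq{\Theta^\irr,\Delta}{B}$; but the maximality clause gives $\Theta^\irr \le \Gamma^\irr$, and then Proposition~\ref{defn:multicut} applied to the hypothesis $\tseq{\Gamma^\irr,\Delta}{B}$ finishes. No auxiliary construction is needed.

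Your approach instead builds the explicit context $\Lambda_A$ and proves the invertibility property~(ii) by hand. This works, but notice that your $\Lambda_A$ is exactly the $\toC{A}$ introduced in the very next proposition (Proposition~\ref{prop:decomp}), so you are effectively front-loading part of that construction into the proof of Proposition~\ref{prop:decompinv}. The paper's ordering keeps Proposition~\ref{prop:decompinv} purely abstract---valid for \emph{any} maximal decomposition, using only its universal property---and defers existence of a particular one to Proposition~\ref{prop:decomp}. The trade-off: your argument avoids citing the Deduction Lemma (replacing it with the more elementary invertibility of $\mulrule L$), at the cost of some redundancy with what follows; the paper's argument is shorter and cleaner in context, but leans on machinery already developed in Section~\ref{sec:seqcal:foccomp}.
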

\begin{proof}
The forward direction is by cutting with $\tseq{\Gamma^\irr}{A}$, the backwards direction is by the deduction lemma (\ref{lem:focgenid}) and the universal property of $\Gamma^\irr$.
\end{proof}
\begin{prop}
\label{prop:decomp}
Let $\toC{A}$ be the irreducible context defined inductively by
$\toC{p} = p$ 
and
$\toC{A\mul B} = \toC{A},B$.
Then $\toC{A}$ is a maximal decomposition of $A$.
\end{prop}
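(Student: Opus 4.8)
The plan is to verify the two conditions of Definition~\ref{defn:maxdecomp} for the context $\toC{A}$. First, a routine induction on $A$ shows that $\toC{A}$ is irreducible and that $\tseq{\toC{A}}{A}$: when $A = p$ the context $\toC{p} = p$ is atomic and $\tseq{p}{p}$ holds by $id$; when $A = A_1\mul A_2$ the context $\toC{A_1\mul A_2} = \toC{A_1},A_2$ has the same (atomic) leftmost formula as $\toC{A_1}$, and $\tseq{\toC{A_1},A_2}{A_1\mul A_2}$ follows from $\tseq{\toC{A_1}}{A_1}$ (induction hypothesis) and $\tseq{A_2}{A_2}$ (by $id$) via one application of $\mulrule R$.

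The substantive half is the universal property, namely that $\tseq{\Theta^\irr}{A}$ implies $\Theta^\irr \le \toC{A}$ in the substitution order for every irreducible $\Theta^\irr$. I would prove this by induction on $A$, invoking focusing completeness (Theorem~\ref{thm:foc-completeness}) so that the given derivation of $\tseq{\Theta^\irr}{A}$ may be taken to be focused. Since $\Theta^\irr$ is irreducible, a focused derivation of $\tseq{\Theta^\irr}{A}$ cannot end in $\mulrule L$; hence by Propositions~\ref{prop:seqclass} and~\ref{prop:equivfocus} it ends in $id^\atm$ if $A$ is atomic and in $\mulrule R^\foc$ if $A$ is a product. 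If $A = p$, the last (and only) rule $id^\atm$ forces $\Theta^\irr = p = \toC{p}$, so $\Theta^\irr \le \toC{p}$ by reflexivity of $\le$. If $A = A_1\mul A_2$, the rule $\mulrule R^\foc$ exhibits a splitting $\Theta^\irr = \Theta^\irr_1,\Delta_2$ with $\Theta^\irr_1$ irreducible, together with derivations of $\tseq{\Theta^\irr_1}{A_1}$ and $\tseq{\Delta_2}{A_2}$. By the induction hypothesis on $A_1$ we get $\Theta^\irr_1 \le \toC{A_1}$, by clause~(1) of Definition~\ref{defn:suborder} we get $\Delta_2 \le A_2$, and by clause~(3) we conclude $\Theta^\irr = \Theta^\irr_1,\Delta_2 \le \toC{A_1},A_2 = \toC{A}$, as desired.

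The only real obstacle is the step that focusing lets us sidestep: working with an arbitrary derivation of $\tseq{\Theta^\irr}{A}$ is hopeless, since a final $cut$ could introduce a cut formula unrelated to the structure of $A$, and the argument would then be unable to decompose $\Theta^\irr$ along $A$. Passing to focused derivations removes $cut$ and, for irreducible antecedents, makes the last rule of the derivation determined by the shape of the succedent, after which the proof is just an application of the closure clauses defining the substitution order.
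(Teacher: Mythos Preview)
Your proof is correct and follows essentially the same approach as the paper: the paper's proof sketch is just ``construct $\tseq{\toC{A}}{A}$ by induction on $A$, and check the universal property of $\toC{A}$ by induction on focused derivations of $\tseq{\Theta^\irr}{A}$'', which is exactly what you do (your induction on $A$ and the paper's induction on focused derivations coincide here, since for an irreducible antecedent the shape of a focused derivation is dictated by the succedent).
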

\begin{proof}
We construct $\tseq{\toC{A}}{A}$ by induction on $A$, and check the universal property of $\toC{A}$ by induction on focused derivations of $\tseq{\Theta^\irr}{A}$.
\end{proof}
\begin{prop}
\label{prop:mudecompmu}
$\toF{\toC{A}} = A$ and $\toC{\toF{\Theta^\irr}} = \Theta^\irr$ for all $A$ and $\Theta^\irr$.
\end{prop}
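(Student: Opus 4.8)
The plan is to prove both equations by direct structural inductions that simply unfold the recursive definitions of $\toF{-}$ and $\toC{-}$; neither half requires more than a one-line base case and a one-line inductive step.

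For the first equation $\toF{\toC{A}} = A$, I would induct on the formula $A$. In the base case $A = p$, the definitions give $\toF{\toC{p}} = \toF{p} = p$. In the inductive case $A = A_1 \mul A_2$, we have $\toF{\toC{A_1\mul A_2}} = \toF{\toC{A_1}, A_2} = \toF{\toC{A_1}} \mul A_2$, and the induction hypothesis on $A_1$ rewrites $\toF{\toC{A_1}}$ to $A_1$, yielding $A_1 \mul A_2 = A$.

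For the second equation $\toC{\toF{\Theta^\irr}} = \Theta^\irr$, I would induct on the length of the irreducible context $\Theta^\irr$. Since $\Theta^\irr$ is irreducible its leftmost formula is atomic. If $\Theta^\irr = p$ is a singleton, then $\toC{\toF{p}} = \toC{p} = p$. Otherwise write $\Theta^\irr = \Theta', A$ with $\Theta'$ non-empty; note that $\Theta'$ has the same leftmost formula as $\Theta^\irr$, hence is itself irreducible, and that $\toF{\Theta^\irr} = \toF{\Theta'} \mul A$ is a product. Therefore $\toC{\toF{\Theta^\irr}} = \toC{\toF{\Theta'} \mul A} = \toC{\toF{\Theta'}}, A$, and the induction hypothesis applied to the shorter context $\Theta'$ gives $\toC{\toF{\Theta'}} = \Theta'$, so that $\toC{\toF{\Theta^\irr}} = \Theta', A = \Theta^\irr$.

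I do not expect any genuine obstacle here; the only points worth flagging in the write-up are that when $\Theta^\irr = \Theta', A$ the prefix $\Theta'$ stays irreducible (so the induction hypothesis applies) and that $\toF{\Theta^\irr}$ is then non-atomic (so the second clause of $\toC{-}$ is the one that fires). Taken together with Proposition~\ref{prop:decomp}, these identities exhibit $\toF{-}$ and $\toC{-}$ as mutually inverse bijections between formulas and irreducible contexts, which on a fixed frontier of length $n+1$ restricts to the order isomorphism between $\Tam{n}$ and the corresponding subposet of $\TamC{n}$.
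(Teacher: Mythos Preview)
Your proof is correct and is exactly the routine structural induction that the paper tacitly leaves to the reader (the paper states Proposition~\ref{prop:mudecompmu} without proof). The only cosmetic remark is that your closing sentence slightly overreaches: the two identities alone give a bijection between formulas and irreducible contexts, but calling it an \emph{order} isomorphism would require the monotonicity established later in Proposition~\ref{prop:adjoint3}, and Proposition~\ref{prop:decomp} is not actually needed for the bijection itself.
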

\noindent
The maximal decomposition $\toC{A}$ of $A$ is essentially the same as what Chapoton \cite{Chapoton2006} calls the ``décomposition maximale'' of a binary tree, although note that the universal property expressed in Definition~\ref{defn:maxdecomp} is also familiar from studies of focusing in other settings (cf.~\cite{Z2008unityduality}).
\begin{prop}
\label{prop:adjoint3}
The operations $\toF{-}$ and $\toC{-}$ induce an \defn{adjoint triple}
\begin{center}
\begin{tikzcd}
 \Frm \arrow[hookrightarrow,"i"]{r} \arrow[bend left=75,"\toCsym"]{r}
  &
 \Ctx_+ \arrow[bend right]{l}[above]{\toFsym}
\end{tikzcd}
\end{center}
where
$\Ctx_+$ denotes the restriction of $\Ctx$ to non-empty contexts, and $i$ the evident embedding.
In other words, we have that
$$\toF{\Gamma} \le A\iff\Gamma \le i[A] \qquad\text{and}\qquad
\toC{A} \le \Theta\iff A \le \toF{\Theta}
$$
for all $\Gamma$, $A$, and $\Theta$.
\end{prop}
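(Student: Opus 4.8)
The plan is to prove exactly the two displayed bi-implications, since together they constitute the asserted adjoint triple. Each of them reduces to chaining together soundness (Theorem~\ref{thm:soundness}), completeness (Theorem~\ref{thm:completeness}), the $cut$ rule, and the defining clauses of the substitution order (Definition~\ref{defn:suborder}); only one step, which I isolate below as a lemma, genuinely uses the focusing property (Theorem~\ref{thm:foc-completeness}). Throughout I will use the trivial observation that $\tseq{\Gamma}{\toF{\Gamma}}$ holds for every non-empty context $\Gamma$ --- an immediate induction, with base case $id$ and inductive step one application of $\mulrule R$ against $id$ --- which, read in the substitution order, says that the ``unit'' $\Gamma \le i[\toF{\Gamma}]$ is valid; dually, Proposition~\ref{prop:mudecompmu} ($\toF{\toC{A}} = A$) supplies the ``unit'' of the other adjunction.

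For $\toF{\Gamma} \le A \iff \Gamma \le i[A]$: if $\toF{\Gamma} \le A$, then $\tseq{\toF{\Gamma}}{A}$ by completeness, and composing with $\tseq{\Gamma}{\toF{\Gamma}}$ via $cut$ gives $\tseq{\Gamma}{A}$, which is $\Gamma \le i[A]$ by clause~(1) of Definition~\ref{defn:suborder}. Conversely, if $\Gamma \le i[A]$, then applying Proposition~\ref{defn:multicut} to $\Gamma \le i[A]$ and the instance $\tseq{A}{A}$ of $id$ gives $\tseq{\Gamma}{A}$, whence $\toF{\Gamma} \le A$ by soundness.

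For $\toC{A} \le \Theta \iff A \le \toF{\Theta}$: if $\toC{A} \le \Theta$, then Proposition~\ref{defn:multicut} applied to $\toC{A} \le \Theta$ and $\tseq{\Theta}{\toF{\Theta}}$ gives $\tseq{\toC{A}}{\toF{\Theta}}$, so $\toF{\toC{A}} \le \toF{\Theta}$ by soundness, i.e.\ $A \le \toF{\Theta}$ since $\toF{\toC{A}} = A$. Conversely, if $A \le \toF{\Theta}$, then $\tseq{A}{\toF{\Theta}}$ by completeness, hence $\tseq{\toC{A}}{\toF{\Theta}}$ by Proposition~\ref{prop:decompinv} taken with $\Delta = \cdot$ (using that $\toC{A}$ is a maximal decomposition of $A$, Proposition~\ref{prop:decomp}). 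It then remains to deduce $\toC{A} \le \Theta$ from $\tseq{\toC{A}}{\toF{\Theta}}$, which I package as the following lemma: \emph{for any irreducible context $\Gamma^\irr$ and any non-empty context $\Theta$, $\tseq{\Gamma^\irr}{\toF{\Theta}}$ implies $\Gamma^\irr \le \Theta$}.

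I would prove this lemma by induction on the length of $\Theta$. If $\Theta$ is a singleton $B$, then $\toF{\Theta} = B$ and $\tseq{\Gamma^\irr}{B}$ already gives $\Gamma^\irr \le B$ by clause~(1). If $\Theta = (\Theta',B)$ with $\Theta'$ non-empty, then $\toF{\Theta} = \toF{\Theta'}\mul B$ is non-atomic while $\Gamma^\irr$ is irreducible, so the sequent is right-focusing and, by Theorem~\ref{thm:foc-completeness}, has a focused derivation, which must end in $\mulrule R^\foc$ and so split $\Gamma^\irr = (\Gamma_1^\irr,\Delta_2)$ with $\tseq{\Gamma_1^\irr}{\toF{\Theta'}}$ and $\tseq{\Delta_2}{B}$ and $\Gamma_1^\irr$ irreducible. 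Frontier preservation (Proposition~\ref{prop:frontier}(1)) forces $\Gamma_1^\irr$ to be non-empty, since $\toF{\Theta'}$ has non-empty frontier; so the induction hypothesis applies to $\Theta'$ and yields $\Gamma_1^\irr \le \Theta'$, clause~(1) yields $\Delta_2 \le B$, and clause~(3) combines them into $\Gamma^\irr = (\Gamma_1^\irr,\Delta_2) \le (\Theta',B) = \Theta$. This lemma is the only real obstacle: it cannot be obtained purely formally from soundness, completeness and $cut$, and its proof rests essentially on decomposing a right-focusing derivation, with frontier preservation pinning down the relevant splitting of the context; everything else is routine manipulation of the adjunction units and the substitution order.
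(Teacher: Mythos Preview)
Your proof is correct and follows the same route as the paper: the first bi-implication is soundness and completeness, and the second is obtained from Propositions~\ref{prop:decompinv}--\ref{prop:mudecompmu}. Your isolated lemma is essentially a restatement of the universal property already established in Proposition~\ref{prop:decomp} (instantiated at $A = \toF{\Theta}$, together with the easy observation $\toC{\toF{\Theta}} \le \Theta$), so the approaches coincide.
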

\begin{proof}
The first part
($\toF{\Gamma} \le A$ if and only if $\Gamma \le i[A]$ := $\tseq{\Gamma}{A}$)
amounts to soundness and completeness of the sequent calculus, while the second part
($\toC{A} \le \Theta$ if and only if $A \le \toF{\Theta}$)
follows from Propositions~\ref{prop:decompinv}--\ref{prop:mudecompmu}.
\end{proof}
\begin{rem}
Since the operations $i$, $\phi$, and $\psi$ are manifestly frontier-preserving, they restrict to adjoint triples between the posets $\Frm_\Omega$ and $\Ctx_\Omega$ of formulas and contexts over any fixed (non-empty) frontier $\Omega$.
In particular, they restrict to an adjoint triple between $\Tam{n}$ and $\TamC{n}$.
\end{rem}
\noindent
Let us write $A \vee_\Omega B$ (respectively, $\Gamma \sqcup_\Omega \Theta$) to denote the join of two formulas (respectively, contexts) with the same frontier $\Omega$, and $\bot_\Omega$ (respectively, $0_\Omega$) to denote the least formula (respectively, context) with frontier $\Omega$.
(Sometimes we will omit the index $\Omega$ when there is no ambiguity.)
\begin{lem}\label{lem:join-formula}
The following equations hold (whenever the corresponding joins exist):
\begin{align}
A \vee_\Omega B &= \toF{\toC{A} \sqcup_\Omega \toC{B}} \label{eqn:join-formula}\\
\bot_\Omega &= \toF{0_\Omega} \label{eqn:bot-formula}
\end{align}
\end{lem}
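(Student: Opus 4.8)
The plan is to deduce both equations from a single general fact about the adjoint triple of Proposition~\ref{prop:adjoint3}: since $\toFsym$ is a left adjoint (to $i$) and $\toCsym$ is a left adjoint (to $\toFsym$), \emph{both} $\toFsym$ and $\toCsym$ preserve every join that exists, including the empty join, i.e.\ the least element. By the remark following Proposition~\ref{prop:adjoint3} these maps are frontier-preserving, so they restrict to an adjoint triple between $\Frm_\Omega$ and $\Ctx_\Omega$ for each $\Omega$; moreover, since comparable formulas (resp.\ contexts) have equal frontiers by Proposition~\ref{prop:frontier}(1), the joins $\vee_\Omega$, $\sqcup_\Omega$ and least elements $\bot_\Omega$, $0_\Omega$ are genuinely computed inside these fixed-frontier posets. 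The only other ingredient needed is Proposition~\ref{prop:mudecompmu}, namely the round-trip identity $\toF{\toC{A}} = A$.

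For equation~\eqref{eqn:join-formula} I show that either side exists if and only if the other does, and that they then coincide. If $\toC{A}\sqcup_\Omega\toC{B}$ exists, then since $\toFsym$ preserves joins, $\toF{\toC{A}}\vee_\Omega\toF{\toC{B}}$ exists and equals $\toF{\toC{A}\sqcup_\Omega\toC{B}}$; by Proposition~\ref{prop:mudecompmu} the left-hand side is $A\vee_\Omega B$, which is exactly \eqref{eqn:join-formula}. Conversely, if $A\vee_\Omega B$ exists, then since $\toCsym$ preserves joins, $\toC{A}\sqcup_\Omega\toC{B}$ exists and equals $\toC{A\vee_\Omega B}$; applying $\toFsym$ and Proposition~\ref{prop:mudecompmu} once more gives $\toF{\toC{A}\sqcup_\Omega\toC{B}} = \toF{\toC{A\vee_\Omega B}} = A\vee_\Omega B$. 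Equation~\eqref{eqn:bot-formula} is the nullary instance of the same argument: $0_\Omega$, the least context of frontier $\Omega$, visibly exists --- it is $\Omega$ itself regarded as a context of atoms, which lies below every $\Theta$ with frontier $\Omega$ since $\tseq{\Omega}{A}$ holds for every formula $A$ of frontier $\Omega$ (iterate $\mulrule R^\foc$, using $id^\atm$ at the leaves) and by clause~(3) of the substitution order --- and a left adjoint preserves least elements, so $\bot_\Omega$ exists and equals $\toF{0_\Omega}$.

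I expect essentially no obstacle here: the substantive content was already settled in establishing the adjoint triple (Propositions~\ref{prop:decompinv}--\ref{prop:adjoint3}) and the identity $\toF{\toC{A}} = A$ (Proposition~\ref{prop:mudecompmu}), and what remains is bookkeeping. The one point that does require care is the clause ``whenever the corresponding joins exist'': a single left adjoint transports the existence of joins in only one direction, so it is genuinely necessary to use that \emph{both} $\toFsym$ and $\toCsym$ are left adjoints in order to get the biconditional, and one must remember throughout that every join and bottom in sight is taken inside the fixed-frontier posets $\Frm_\Omega\cong\Tam{n}$ and $\Ctx_\Omega\cong\TamC{n}$ rather than in $\Frm$ or $\Ctx$.
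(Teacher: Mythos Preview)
Your proposal is correct and follows essentially the same approach as the paper's one-line proof (``By Propositions~\ref{prop:mudecompmu} and \ref{prop:adjoint3}, since left adjoints preserve arbitrary joins''). You have simply unpacked the argument in more detail, including the biconditional on existence (which is slightly more than the lemma literally asserts but is harmless) and the explicit construction of $0_\Omega$ (which the paper defers to Propositions~\ref{prop:frontier-minimal}--\ref{prop:bot-context}).
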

\begin{proof}
By Propositions~\ref{prop:mudecompmu} and \ref{prop:adjoint3}, since left adjoints preserve arbitrary joins.
\end{proof}
\noindent
Now, the existence of a least context is trivial to establish.
\begin{prop}
\label{prop:frontier-minimal}
If $\frontier{A} = \Omega$ then $\tseq{\Omega}{A}$.
\end{prop}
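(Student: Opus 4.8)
The plan is to prove the proposition by a straightforward induction on the structure of the formula $A$, using the right rule $\mulrule R$ to mirror the recursive definition of the frontier $\frontier{A}$.

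In the base case $A = p$ is atomic, so by definition $\frontier{A} = p$; hence $\Omega = p$ and the sequent $\tseq{\Omega}{A}$ is simply $\tseq{p}{p}$, which is an instance of $id$ (indeed of $id^\atm$). In the inductive step $A = A_1 \mul A_2$, and by definition $\frontier{A} = (\frontier{A_1},\frontier{A_2})$; writing $\Omega_1 = \frontier{A_1}$ and $\Omega_2 = \frontier{A_2}$ we have $\Omega = (\Omega_1,\Omega_2)$. Applying the induction hypothesis to $A_1$ and to $A_2$ yields derivations of $\tseq{\Omega_1}{A_1}$ and $\tseq{\Omega_2}{A_2}$; supplying these as the two premises of $\mulrule R$ produces a derivation of $\tseq{\Omega_1,\Omega_2}{A_1 \mul A_2}$, that is, of $\tseq{\Omega}{A}$, as required.

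There is essentially no obstacle here; the only point worth remarking is that each $\Omega_i$ is a nonempty list of atoms and hence an irreducible context, so the instance of $\mulrule R$ used in the inductive step is in fact an instance of the restricted rule $\mulrule R^\foc$, and the derivation constructed is already focused. In particular the conclusion could equivalently be phrased as $\fseq{\Omega}{A}$. Combined with frontier preservation (Proposition~\ref{prop:frontier}(1)), this identifies $\Omega$, viewed as a context, as a context of frontier $\Omega$ that lies below $A$ in the substitution order, and (using the deduction lemma and the maximal decomposition of $A$) as the least such context $0_\Omega$, which is what is needed in the sequel.
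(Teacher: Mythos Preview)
Your proof is correct and is precisely the induction the paper has in mind (the paper's own proof is just the two words ``By induction''). Your additional observation that the constructed derivation is already focused is also correct and relevant, though not strictly needed for the proposition as stated.
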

\begin{prop}
\label{prop:bot-context}
If $\frontier{\Theta} = \Omega$ then $\Omega \le \Theta$.
\end{prop}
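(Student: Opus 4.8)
The plan is to prove Proposition~\ref{prop:bot-context} by a straightforward structural induction on the context $\Theta$, reading a frontier $\Omega$ as the context consisting of its atoms listed in order, and using only the inductive clauses of Definition~\ref{defn:suborder} together with the ``formula-level'' statement Proposition~\ref{prop:frontier-minimal}.

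First I would record the decomposition of $\Omega$. If $\Theta = A_1,\dots,A_n$ and $\frontier{\Theta} = \Omega$, then by definition of the frontier of a context we have $\Omega = \frontier{A_1},\dots,\frontier{A_n}$, so $\Omega$ splits (using the strict associativity of context concatenation) as the concatenation of the individual frontiers $\frontier{A_i}$. Now argue by induction on $\Theta$. In the base case $\Theta = \cdot$ we have $\Omega = \cdot$, and $\cdot \le \cdot$ by clause (2) of Definition~\ref{defn:suborder}. In the inductive step $\Theta = (A,\Theta')$: Proposition~\ref{prop:frontier-minimal} gives $\tseq{\frontier{A}}{A}$, hence $\frontier{A} \le A$ by clause (1); the induction hypothesis applied to $\Theta'$ gives $\frontier{\Theta'} \le \Theta'$; and clause (3) combines these to yield $(\frontier{A},\frontier{\Theta'}) \le (A,\Theta')$, which is exactly $\Omega \le \Theta$.

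I do not expect any real obstacle here: the statement is the context-level analogue of Proposition~\ref{prop:frontier-minimal}, and beyond that proposition the only ingredients are the congruence clause (3) of the substitution order (applied once per formula of $\Theta$) and the bookkeeping that lets the frontier of $\Theta$ be reassembled from the frontiers of its formulas. Together with Proposition~\ref{prop:frontier-minimal} and Lemma~\ref{lem:join-formula}, this identifies $0_\Omega$ with $\Omega$ itself and $\bot_\Omega$ with $\toF{\Omega}$, the fully left-bracketed word over $\Omega$, which is what is needed downstream.
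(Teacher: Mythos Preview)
Your proof is correct and matches the paper's intended argument: the paper's own proof is simply ``By induction,'' and what you have written is exactly the unpacking of that induction on $\Theta$, using Proposition~\ref{prop:frontier-minimal} for each formula together with clauses (1)--(3) of Definition~\ref{defn:suborder}.
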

\begin{proof}
By induction.
\end{proof}
\noindent
In other words, we have that
\begin{equation}
0_\Omega = \Omega. \label{eqn:bot-context}
\end{equation}
This in turn implies by \eqref{eqn:bot-formula} that the least formula is the ``left comb'' $\bot_\Omega = \toF{\Omega}$, which again is easy to verify.

The case of binary joins is more subtle, and as previewed above, we will make use of the fact that 
$\Ctx_\Omega$ lives naturally over the lattice $\CompC{\Omega}$ of compositions of $\Omega$ ordered by refinement.
Recall that a \emph{composition} of an integer $n$ (into $k$ parts) is a tuple of positive integers $\alpha = (n_1;\dots;n_k)$ such that $n = n_1 + \dots + n_k$ (or equivalently, a surjective monotone function $\mathbf{n} \to \mathbf{k}$ from the ordinal $\mathbf{n} = \set{1 < \dots < n}$ to the ordinal $\mathbf{k} = \set{1 < \dots < k}$).
Similarly, a composition of a list $\Omega$ is a list of non-empty blocks $\alpha = (\Omega_1; \dots; \Omega_k)$ such that $\Omega$ is the concatenation $\Omega_1,\dots,\Omega_k$.
Compositions form the morphisms of a category (isomorphic to the category of linearly ordered sets and surjective monotone functions), where a composition $\alpha = (\Omega_1; \dots; \Omega_k)$ can be \emph{composed} with a composition $\beta$ of its list of blocks $[\Omega_1],\dots,[\Omega_k]$, to obtain another composition $\beta\circ \alpha$ of $\Omega$ in which some consecutive blocks of $\alpha$ have been merged.
This in turn induces a partial order $\CompC{\Omega}$ on compositions of the same list $\Omega$: $\alpha$ is said to \emph{refine} another composition $\alpha'$ just in case $\alpha' = \beta\circ \alpha$ for some $\beta$.
\begin{prop}
The function 
$
(A_1,\dots,A_k) \mapsto (\frontier{A_1}; \dots; \frontier{A_k})
$
sending a context to the underlying composition of its frontier is monotone relative to the substitution ordering on contexts and the refinement ordering on compositions.
\end{prop}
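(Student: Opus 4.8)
\emph{Proof idea.} The plan is to prove this by a structural induction on the derivation of $\Gamma \le \Theta$, following the three generating clauses of Definition~\ref{defn:suborder}. Write $\alpha(\Gamma)$ for the composition $(\frontier{A_1};\dots;\frontier{A_k})$ attached to a context $\Gamma = A_1,\dots,A_k$. As a preliminary, I would first record the auxiliary fact that $\Gamma \le \Theta$ forces $\frontier{\Gamma} = \frontier{\Theta}$ — this is logically prior to the statement itself, since the refinement order $\CompC{\Omega}$ only compares compositions of one and the same list $\Omega$, so without it the inequality $\alpha(\Gamma) \le \alpha(\Theta)$ is not even well-typed. That fact is a routine instance of the same three-case induction: clause~(1) is frontier preservation ($\tseq{\Gamma}{A}$ implies $\frontier{\Gamma} = \frontier{A}$, Proposition~\ref{prop:frontier}(1)), clause~(2) is trivial, and clause~(3) follows from $\frontier{(\Gamma_1,\Theta_1)} = \frontier{\Gamma_1},\frontier{\Theta_1}$.

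For the monotonicity itself I would use the following bookkeeping device: encode a composition of a list $\Omega$ of length $m$ by its set of \emph{cut points}, a subset of $\{1,\dots,m-1\}$; concretely, $\alpha(A_1,\dots,A_k)$ has a cut point at each partial sum $|\frontier{A_1}|+\dots+|\frontier{A_j}|$ for $1 \le j < k$, where $|\cdot|$ denotes list length. Since refining a composition splits blocks and hence only adds cut points, ``$\alpha$ refines $\alpha'$'' (i.e.\ $\alpha \le \alpha'$ in $\CompC{\Omega}$) translates exactly to the reverse inclusion of cut-point sets; in particular the one-block composition, with empty cut-point set, is the top of $\CompC{\Omega}$. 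With this in hand the induction is immediate. In clause~(1) the context $\Theta = A$ is a singleton, so $\alpha(A)$ is the top element of $\CompC{\frontier{A}}$ and $\alpha(\Gamma) \le \alpha(A)$ holds trivially. Clause~(2) is immediate. In clause~(3), with $\Gamma = (\Gamma_1,\Theta_1)$, $\Theta = (\Gamma_2,\Theta_2)$, $\Gamma_1 \le \Gamma_2$ and $\Theta_1 \le \Theta_2$: if either of the two subcontexts on a side is empty the claim collapses to the induction hypothesis, so assume all four are non-empty and set $m_1 = |\frontier{\Gamma_1}| = |\frontier{\Gamma_2}|$ (using the auxiliary fact). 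Then the cut points of $\alpha(\Gamma_i,\Theta_i)$ are the disjoint union of those of $\alpha(\Gamma_i)$, the boundary point $m_1$, and the $m_1$-shifted cut points of $\alpha(\Theta_i)$; since by the induction hypothesis the cut points for $i = 2$ are contained in those for $i = 1$ in each of the two constituents, the same inclusion persists after forming these unions, which is what is required. (Equivalently, and without any index arithmetic, one may place the two block-merging maps witnessing the refinements of the subcontexts side by side to get a single block-merging map witnessing the refinement of the concatenation.)

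I do not anticipate a serious obstacle. The two mildly delicate points are that the frontier-equality lemma must be established first (or else the comparison in the conclusion makes no sense), and that the concatenation case needs the small amount of index arithmetic above — which the cut-point encoding renders mechanical. In a slogan, the proposition just says that cut-point sets add along concatenation of contexts and only shrink along the sequent arrow.
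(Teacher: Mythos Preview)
Your argument is correct. The paper states this proposition without proof, so there is nothing to compare against; your structural induction on the generating clauses of Definition~\ref{defn:suborder}, together with the preliminary frontier-equality lemma, is exactly the natural way to fill the gap, and all three cases are handled soundly (in particular your cut-point bookkeeping for clause~(3) is a clean way to see that concatenation respects refinement).
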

\noindent
The lattice structure of $\CompC{\Omega}$ is easy to describe, with $\CompC{\Omega} \cong 2^n$ for any non-empty list $\Omega$ of length $n+1$.
(To see this one can think of the block notation for compositions as encoding $n$-ary bit vectors, with the comma `,' standing for 1 and the semicolon `;' for 0.)
But to compute the join of two contexts with the same frontier, we will also need the following more informative characterization of the join of compositions.
\begin{prop}
\label{prop:comp-pushout}
Any pair of compositions $\alpha$ and $\alpha'$ of $\Omega$ has a \defn{pushout} consisting of a pair $(\beta,\beta')$ satisfying $\beta \circ \alpha = \beta' \circ \alpha' = \alpha \sqcup \alpha'$, and such that for any other pair $(\gamma, \gamma')$ satisfying $\gamma \circ \alpha = \gamma' \circ \alpha'$, there exists a unique $\delta$ such that $\gamma = \delta \circ \beta$ and $\gamma' = \delta \circ \beta'$.
\end{prop}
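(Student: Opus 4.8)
The plan is to reduce the statement to elementary facts about subsets of an $n$-element set, using the observation --- recalled just above --- that a composition of $\Omega$ is a \emph{surjective} monotone function out of $[\Omega]$, hence an epimorphism, so that any $\beta$ with $\beta\circ\alpha = \alpha'$ is uniquely determined by $\alpha$ and $\alpha'$. In particular there is at most one morphism between any two compositions of $\Omega$, so the category of such compositions (a coslice of the category of compositions) is a poset, and that poset is exactly $\CompC{\Omega}$ ordered by refinement. Concretely, I would identify a composition $\alpha$ with the set $S_\alpha \subseteq \set{1,\dots,n}$ of positions at which it places a block boundary, so that $\alpha$ refines $\alpha'$ iff $S_\alpha \supseteq S_{\alpha'}$; this makes explicit the isomorphism $\CompC{\Omega}\cong 2^n$ noted above.

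Given this, the first step is to observe that binary joins in $\CompC{\Omega}$ exist and satisfy $S_{\alpha\sqcup\alpha'} = S_\alpha\cap S_{\alpha'}$ (keep a boundary exactly where both $\alpha$ and $\alpha'$ have one): this is visibly the finest composition of which both $\alpha$ and $\alpha'$ are refinements. Since $S_\alpha, S_{\alpha'}\supseteq S_\alpha\cap S_{\alpha'}$, there are unique morphisms $\beta\colon\alpha\to\alpha\sqcup\alpha'$ and $\beta'\colon\alpha'\to\alpha\sqcup\alpha'$, and $\beta\circ\alpha = \alpha\sqcup\alpha' = \beta'\circ\alpha'$ because both sides are the unique morphism from the discrete composition to $\alpha\sqcup\alpha'$. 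This gives the existence half of the statement.

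For the universal property, suppose $\gamma\circ\alpha = \gamma'\circ\alpha' =: \alpha''$. Then $\alpha''$ is a common coarsening of $\alpha$ and $\alpha'$, so $S_{\alpha''}\subseteq S_\alpha\cap S_{\alpha'} = S_{\alpha\sqcup\alpha'}$; hence $\alpha\sqcup\alpha'$ refines $\alpha''$, and I take $\delta\colon\alpha\sqcup\alpha'\to\alpha''$ to be the unique connecting morphism. Both $\gamma$ and $\delta\circ\beta$ are morphisms from $\alpha$ to $\alpha''$, so they coincide by uniqueness of morphisms between compositions; likewise $\gamma' = \delta\circ\beta'$, and any $\delta$ satisfying these equations is in particular a morphism $\alpha\sqcup\alpha'\to\alpha''$, hence is the one just chosen. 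So $(\beta,\beta')$ is the claimed pushout.

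The one point not settled by pure formalities is the translation between the three descriptions of a composition --- as a list of blocks, as a surjective monotone map, and as a subset of $\set{1,\dots,n}$ --- in particular checking that the operation $\beta\circ\alpha$ of merging consecutive blocks of $\alpha$ corresponds precisely to deleting boundaries from $S_\alpha$. I expect this bookkeeping to be the only real work; once it is in place, the rest is the general observation that a poset with binary joins has pushouts of spans out of its least element (here, the discrete composition), those pushouts being the joins, together with the fact that \emph{all} the maps in sight are unique because the relevant category is thin.
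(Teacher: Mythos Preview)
Your argument is correct. In fact, the paper does not supply an explicit proof of this proposition at all: it is stated immediately after the remark that $\CompC{\Omega}\cong 2^n$ (via the bit-vector encoding of block boundaries), and is evidently meant to be read as a direct consequence of that identification. Your proposal spells out precisely the details the paper leaves implicit --- identifying compositions with boundary sets, observing that the coslice category is thin, and reducing the pushout to the join in $2^n$ --- so there is nothing to compare beyond noting that you have written out what the paper takes for granted.
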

\noindent
\begin{lem}\label{lem:join-context}
Let $\Gamma$ and $\Theta$ be two contexts with the same frontier $\frontier{\Gamma} = \frontier{\Theta} = \Omega$, and let $\alpha_\Gamma$ and $\alpha_\Theta$ be the corresponding compositions of $\Omega$.
Then
\begin{equation}\label{eqn:join-context}
\Gamma \sqcup \Theta = i[\toF{\Gamma_1} \vee \toF{\Theta_1}],\dots,i[\toF{\Gamma_k}\vee\toF{\Theta_k}]
\end{equation}
(assuming the corresponding joins exist), 
where the compositions $\beta_\Gamma = (\Gamma_1 ; \dots ; \Gamma_k)$ and $\beta_\Theta = (\Theta_1 ; \dots ; \Theta_k)$ of $\Gamma$ and $\Theta$ are obtained as the pushout of $\alpha_\Gamma$ and $\alpha_\Theta$.
\end{lem}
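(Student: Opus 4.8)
The plan is to prove directly that the context
$\Xi \defeq i[\toF{\Gamma_1}\vee\toF{\Theta_1}],\dots,i[\toF{\Gamma_k}\vee\toF{\Theta_k}]$
on the right is the join of $\Gamma$ and $\Theta$ in the substitution order, under the assumption that all joins occurring in the statement exist (both the componentwise $\toF{\Gamma_j}\vee\toF{\Theta_j}$ and $\Gamma\sqcup\Theta$ itself). Since joins in $\Frm_\Omega$ are frontier-preserving we have $\frontier{\Xi}=\Omega$, so $\Xi$ indeed lives in $\Ctx_\Omega$ alongside $\Gamma$, $\Theta$, $\Gamma\sqcup\Theta$. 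First I would check that $\Xi$ is an upper bound. As $\beta_\Gamma=(\Gamma_1;\dots;\Gamma_k)$ is by construction a composition of the context $\Gamma$, we have $\Gamma=\Gamma_1,\dots,\Gamma_k$ literally, so by clause~(3) of Definition~\ref{defn:suborder} (iterated) it suffices to check $\Gamma_j\le i[\toF{\Gamma_j}\vee\toF{\Theta_j}]$ for each $j$. By the adjoint correspondence $\toF{\Gamma_j}\le A\iff\Gamma_j\le i[A]$ of Proposition~\ref{prop:adjoint3} this reduces to $\toF{\Gamma_j}\le\toF{\Gamma_j}\vee\toF{\Theta_j}$, which holds; hence $\Gamma\le\Xi$, and symmetrically $\Theta\le\Xi$.

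For minimality, put $\Psi\defeq\Gamma\sqcup\Theta$; since $\Xi$ is an upper bound, $\Psi\le\Xi$. The crux is to determine the frontier-composition $\alpha_\Psi$ of $\Psi$ exactly. Monotonicity of the map sending a context to its underlying composition of $\Omega$, applied to $\Gamma\le\Psi$, $\Theta\le\Psi$, and $\Psi\le\Xi$, yields $\alpha_\Gamma\sqcup\alpha_\Theta\le\alpha_\Psi\le\alpha_\Xi$; but $\alpha_\Xi=(\frontier{\Gamma_1};\dots;\frontier{\Gamma_k})=\beta_\Gamma\circ\alpha_\Gamma$ is precisely $\alpha_\Gamma\sqcup\alpha_\Theta$ by the pushout description of Proposition~\ref{prop:comp-pushout}. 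So $\alpha_\Psi=\alpha_\Gamma\sqcup\alpha_\Theta$, which forces $\Psi$ to have exactly $k$ formulas $\Psi=\Psi_1,\dots,\Psi_k$ with $\frontier{\Psi_l}=\frontier{\Gamma_l}=\frontier{\Theta_l}$. I would then invoke the following elementary inversion principle for the substitution order: if $\Lambda\le(\Xi_1,\dots,\Xi_n)$ then $\Lambda$ splits as $\Lambda_1,\dots,\Lambda_n$ with $\tseq{\Lambda_i}{\Xi_i}$ for each $i$ --- which follows in one line by checking that the relation ``$\Lambda$ splits along the target with each piece entailing the corresponding formula'' contains the three generators of Definition~\ref{defn:suborder} and is closed under clause~(3), hence contains the whole substitution order. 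Applying it to $\Gamma\le\Psi$ and to $\Theta\le\Psi$, and using frontier preservation (Proposition~\ref{prop:frontier}) to pin the split down to the grouping coming from $\beta_\Gamma$ and $\beta_\Theta$, we obtain $\tseq{\Gamma_l}{\Psi_l}$ and $\tseq{\Theta_l}{\Psi_l}$; by soundness (Theorem~\ref{thm:soundness}) this gives $\toF{\Gamma_l}\le\Psi_l$ and $\toF{\Theta_l}\le\Psi_l$, whence $\toF{\Gamma_l}\vee\toF{\Theta_l}\le\Psi_l$. By completeness (Theorem~\ref{thm:completeness}) then $i[\toF{\Gamma_l}\vee\toF{\Theta_l}]\le\Psi_l$, and clause~(3) assembles these into $\Xi\le\Psi$. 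With $\Psi\le\Xi$ this yields $\Gamma\sqcup\Theta=\Psi=\Xi$.

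I expect the main obstacle to be not a single deep step but the careful bookkeeping around the composition structure. Two points are delicate. First, one really must establish that $\Xi$ is an upper bound \emph{before} analyzing $\Psi$: this is exactly what lets one sandwich $\alpha_\Psi$ from above and conclude that $\Psi$ realizes precisely the pushout composition rather than a coarser one, so that the $\Psi_l$ line up one-for-one with the $\Gamma_l$ and $\Theta_l$. Second, the inversion principle for the substitution order has to be proved from its generating clauses, since a naive induction on sequent derivations fails --- the $cut$ rule gives no control over the frontier of the cut formula. (If instead the lemma were meant to assume only the componentwise joins and to \emph{deduce} the existence of $\Gamma\sqcup\Theta$, one could not take $\Psi=\Gamma\sqcup\Theta$, and for a general upper bound $\Psi$ one would additionally have to treat the case where $\alpha_\Psi$ is strictly coarser, reducing block-by-block to the same statement for smaller frontiers; but the stated hypothesis appears to make $\Gamma\sqcup\Theta$ one of the joins assumed to exist.)
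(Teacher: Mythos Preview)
Your argument is clean, and the inversion principle for the substitution order is a nice device (the paper uses the same fact only implicitly).  But the route is genuinely different from the paper's, and there is a real gap tied to your reading of the hypothesis.

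The paper does not argue directly.  It first isolates two auxiliary facts: Lemma~\ref{lem:join-relprime}, that $\Gamma\sqcup\Theta=i[\toF{\Gamma}\vee\toF{\Theta}]$ whenever $\alpha_\Gamma$ and $\alpha_\Theta$ are relatively prime, and Lemma~\ref{lem:join-pair}, that $(A,\Gamma)\sqcup(B,\Theta)=(A\vee B),(\Gamma\sqcup\Theta)$ whenever $\frontier{A}=\frontier{B}$ and $\frontier{\Gamma}=\frontier{\Theta}$.  Both are proved by verifying the universal property against an \emph{arbitrary} upper bound $\Delta$, using the maximal decomposition $\toC{-}$ to control how $\Delta$ can split.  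The general lemma then follows by applying these along the blocks of the pushout.  The point of this detour is that it establishes the \emph{existence} of $\Gamma\sqcup\Theta$ from the componentwise formula joins alone, which is exactly what the recursive computation in the subsequent lattice theorem requires.

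Your sandwich $\alpha_\Gamma\sqcup\alpha_\Theta\le\alpha_\Psi\le\alpha_\Xi$ needs $\Psi\le\Xi$, which holds only for the least upper bound, not for an arbitrary one --- so your argument genuinely depends on assuming $\Gamma\sqcup\Theta$ exists.  That reading of ``the corresponding joins'' is defensible on the surface of the statement, but it is not the one the paper intends, and under it the lemma would be useless for the inductive lattice proof.  Your parenthetical suggests recursing on smaller frontiers when $\alpha_\Psi$ is strictly coarser, but this does not terminate: an arbitrary upper bound may be a single formula with frontier all of $\Omega$, and then you are left to show $\tseq{C_1,\dots,C_k}{D}$ from $\tseq{\Gamma}{D}$ and $\tseq{\Theta}{D}$ with no size reduction.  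Handling exactly that case is what the maximal-decomposition argument in Lemma~\ref{lem:join-pair} provides.
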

\noindent
The proof of Lemma~\ref{lem:join-context} can be reduced to the following simpler cases.
\begin{lem}\label{lem:join-pair}
If $\frontier{A} = \frontier{B}$ and $\frontier{\Gamma} = \frontier{\Theta}$ then
$
(A,\Gamma) \sqcup (B,\Theta)
=
(A \vee B),(\Gamma\sqcup \Theta)
$.
\end{lem}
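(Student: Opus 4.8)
The plan is to prove the equation via the two inequalities $(A,\Gamma)\sqcup(B,\Theta)\le(A\vee B),(\Gamma\sqcup\Theta)$ and $(A,\Gamma)\sqcup(B,\Theta)\ge(A\vee B),(\Gamma\sqcup\Theta)$. The first is immediate: from $A\le A\vee B$ and $\Gamma\le\Gamma\sqcup\Theta$ the congruence clause~(3) of Definition~\ref{defn:suborder} gives $(A,\Gamma)\le(A\vee B),(\Gamma\sqcup\Theta)$, and symmetrically for $(B,\Theta)$, so $(A\vee B),(\Gamma\sqcup\Theta)$ is a common upper bound and therefore dominates the join. The content is the second inequality, i.e.\ that $(A\vee B),(\Gamma\sqcup\Theta)$ lies below \emph{every} common upper bound of $(A,\Gamma)$ and $(B,\Theta)$.

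For that, I would first record the normal form for the substitution order: writing a non-empty $\Xi$ as a list of formulas $C_1,\dots,C_m$, one has $\Delta\le\Xi$ iff $\Delta$ can be cut into consecutive non-empty blocks $\Delta=\Delta_1,\dots,\Delta_m$ with $\tseq{\Delta_i}{C_i}$ for each $i$ (a routine induction on the generation of $\le$ in Definition~\ref{defn:suborder}, the blocks being forced non-empty by frontier preservation, Proposition~\ref{prop:frontier}). Applied to a common upper bound $\Xi=C_1,\dots,C_m$ this yields block decompositions of $(A,\Gamma)$ and of $(B,\Theta)$ along the \emph{same} partition $\frontier{C_1},\dots,\frontier{C_m}$ of the common frontier. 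Since $\frontier A=\frontier B$ and $A$, $B$ are single formulas, each lies entirely within the first block, so either $\frontier A=\frontier{C_1}$ --- the clean case, where the first blocks are $A$ and $B$ while the remaining blocks bound $\Gamma$ and $\Theta$ --- or $\frontier A\subsetneq\frontier{C_1}$ --- the straddle case, where the first blocks are $A,\Gamma_0$ and $B,\Theta_0$ for non-empty prefixes $\Gamma_0,\Theta_0$ with $\frontier{\Gamma_0}=\frontier{\Theta_0}$, $\tseq{(A,\Gamma_0)}{C_1}$ and $\tseq{(B,\Theta_0)}{C_1}$, and the remaining blocks bound $\Gamma_1,\Theta_1$ with $\Gamma=\Gamma_0,\Gamma_1$, $\Theta=\Theta_0,\Theta_1$. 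In the clean case $A\vee B\le C_1$ and $\Gamma\sqcup\Theta\le C_2,\dots,C_m$, so monotonicity of concatenation closes it. The straddle case I would handle by well-founded induction on the length of $\frontier\Gamma$: the base $\Gamma=\Theta=\cdot$ gives $A\sqcup B=A\vee B$ for one-formula contexts directly from the normal form and the definition of $A\vee B$; and when the first block does not exhaust $\Gamma$ (so $m\ge 2$ and $\Gamma_0\subsetneq\Gamma$), the induction hypothesis on $(A,\Gamma_0),(B,\Theta_0)$ with the one-formula upper bound $C_1$ gives $\tseq{(A\vee B,\Gamma_0\sqcup\Theta_0)}{C_1}$, which combines with $\Gamma\sqcup\Theta\le(\Gamma_0\sqcup\Theta_0),(\Gamma_1\sqcup\Theta_1)$ (monotonicity) and $\Gamma_1\sqcup\Theta_1\le C_2,\dots,C_m$ to give $(A\vee B),(\Gamma\sqcup\Theta)\le\Xi$.

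The main obstacle is the leftover case $m=1$: a single-formula common upper bound $C=C'\mul C''$ with $\Gamma$ and $\Theta$ both non-empty. Here the reduction stalls, since the top-level split of $C$ need not line up with the $A/\Gamma$ cut --- the semi-associative law genuinely creates common upper bounds straddling this cut --- so the induction on $\frontier\Gamma$ does not decrease. Via Proposition~\ref{prop:equiv} together with soundness and completeness, this case is equivalent to the slice-wise identity $\actF{A\vee B}{\Gamma\sqcup\Theta}=\actF{A}{\Gamma}\vee\actF{B}{\Theta}$ for the right action over a fixed frontier. I would tackle it either by strengthening the outer induction so that it proves this identity simultaneously (together with the block-splitting identity for joins of contexts that also feeds Lemma~\ref{lem:join-context}), or by establishing separately that the biased product $\mul$ distributes over joins in each argument on fixed-frontier slices --- e.g.\ by carrying along Lambek's division connectives, under which $(-)\mul A$ acquires the right adjoint $(-)/A$ --- and transporting that back through the fact that $\toF{-}$ preserves joins (Proposition~\ref{prop:adjoint3}). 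Pinning down this distributivity of the biased product using only the semi-associative structure is, I expect, where the real work of the proof lies.
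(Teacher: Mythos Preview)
Your analysis is careful and correctly isolates the one hard step: the single-formula common upper bound $C$ with $\Gamma,\Theta$ non-empty. But you leave that step unresolved, and the routes you contemplate---a separate distributivity law for $\mul$ via right adjoints supplied by division connectives, or a strengthened simultaneous induction proving $\actF{(A\vee B)}{(\Gamma\sqcup\Theta)} = \actF{A}{\Gamma}\vee\actF{B}{\Theta}$---are not what the paper does. The ingredient you are missing is already on the table: the maximal decomposition operator $\toCsym$ (Propositions~\ref{prop:decompinv}--\ref{prop:mudecompmu}) and the adjoint triple $\toCsym \dashv \toFsym \dashv i$ of Proposition~\ref{prop:adjoint3}.

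After reducing by induction on the upper bound to a singleton $i[C]$, the paper does not try to split $C$ at its top product or run an induction on $|\frontier\Gamma|$. Instead it passes from $\tseq{A,\Gamma}{C}$ and $\tseq{B,\Theta}{C}$ directly to inequalities between \emph{irreducible} contexts: Proposition~\ref{prop:decompinv} gives $\tseq{\toC{A},\Gamma}{C}$, and since $(\toC{A},\Gamma)$ is irreducible, the universal property of $\toC{C}$ (Definition~\ref{defn:maxdecomp}) yields $\toC{A},\Gamma \le \toC{C}$; symmetrically $\toC{B},\Theta \le \toC{C}$. This is precisely the move that dissolves your $m=1$ obstacle: the single formula $C$ has been replaced by the irreducible context $\toC{C}$, which has length at least two whenever $C$ is non-atomic, and all three of $\toC{A}$, $\toC{B}$, $\toC{C}$ begin with the same atom by frontier preservation. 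The paper then reads off a split $\toC{C} = (\Delta,\Delta')$ with $\toC{A}\le\Delta$, $\toC{B}\le\Delta$, $\Gamma\le\Delta'$, $\Theta\le\Delta'$, and the conclusion follows via the adjunctions: $\toC{A}\le\Delta$ and $\toC{B}\le\Delta$ give $A\vee B \le \toF{\Delta}$, and together with $\Gamma\sqcup\Theta\le\Delta'$ and $\toF{\toF{\Delta},\Delta'} = \toF{\Delta,\Delta'} = \toF{\toC{C}} = C$ one obtains $\tseq{A\vee B,\Gamma\sqcup\Theta}{C}$. In short, the adjunction $\toCsym \dashv \toFsym$ already packages the ``distributivity of the biased product over joins'' you were reaching for, without any appeal to division.
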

\begin{proof}
We have to show that $A,\Gamma \le \Delta$ and $B,\Theta\le \Delta$ if and only if 
$(A\vee B), (\Gamma\sqcup\Theta) \le \Delta$.
The backward direction is immediate.
For the forward direction, by induction on $\Delta$ we can reduce to the case of a singleton context $\Delta = i[C]$.
So suppose that $\tseq{A,\Gamma}{C}$ and $\tseq{B,\Theta}{C}$.
By the properties of maximal decomposition, this implies that $\toC{A},\Gamma \le \toC{C}$ and $\toC{B},\Theta \le \toC{C}$.
But then we must have $\toC{C} = (\Delta,\Delta')$ for some $\Delta$ and $\Delta'$ such that $\toC{A} \le \Delta$ and $\toC{B} \le \Delta$ and $\Gamma \le \Delta'$ and $\Theta \le \Delta'$, 
which in turn implies that 
$\tseq{A \vee B,\Gamma \sqcup \Theta}{C}$.
\end{proof}
\begin{lem}\label{lem:join-relprime}
If $\Gamma$ and $\Theta$ are such that $\alpha_\Gamma \sqcup \alpha_\Theta$ is the one-block composition (i.e., $\alpha_\Gamma$ and $\alpha_\Theta$ are ``relatively prime''), then $\Gamma \sqcup \Theta = i[\toF{\Gamma} \vee \toF{\Theta}]$.
\end{lem}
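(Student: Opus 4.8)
The plan is to prove the identity directly from the adjoint triple of Proposition~\ref{prop:adjoint3}, by showing that $i[\toF{\Gamma}\vee\toF{\Theta}]$ is the least upper bound of $\Gamma$ and $\Theta$ among contexts with the common frontier $\Omega=\frontier{\Gamma}=\frontier{\Theta}$ (assuming, as in Lemmas~\ref{lem:join-formula} and~\ref{lem:join-context}, that the relevant joins exist; the converse existence implication will come out at the end). That it is an upper bound is immediate: from $\toF{\Gamma}\le\toF{\Gamma}\vee\toF{\Theta}$ the adjunction gives $\Gamma\le i[\toF{\Gamma}\vee\toF{\Theta}]$, and symmetrically for $\Theta$.

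The heart of the argument is the following observation, which is exactly where the relative-primeness hypothesis enters: \emph{every} common upper bound $\Delta$ of $\Gamma$ and $\Theta$ is a singleton context. Indeed, writing $\alpha_\Delta$ for the composition of $\Omega$ induced by $\Delta$, monotonicity of the map sending a context to the composition of its frontier (the proposition stated just before Proposition~\ref{prop:comp-pushout}) yields $\alpha_\Gamma\le\alpha_\Delta$ and $\alpha_\Theta\le\alpha_\Delta$ in $\CompC{\Omega}$ from $\Gamma\le\Delta$ and $\Theta\le\Delta$, hence $\alpha_\Gamma\sqcup\alpha_\Theta\le\alpha_\Delta$. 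By hypothesis $\alpha_\Gamma\sqcup\alpha_\Theta$ is the one-block composition, which is the top element of $\CompC{\Omega}$, so $\alpha_\Delta$ is the one-block composition as well; since every formula has a non-empty frontier, a context whose induced composition has a single block consists of a single formula, so $\Delta=i[C]$ for some formula $C$.

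It then remains to dispatch the singleton case, which is routine bookkeeping with the adjunction: from $\Gamma\le i[C]$ and $\Theta\le i[C]$ we get $\toF{\Gamma}\le C$ and $\toF{\Theta}\le C$ by Proposition~\ref{prop:adjoint3}, hence $\toF{\Gamma}\vee\toF{\Theta}\le C$, and applying the monotone embedding $i$ gives $i[\toF{\Gamma}\vee\toF{\Theta}]\le i[C]=\Delta$; this proves $\Gamma\sqcup\Theta=i[\toF{\Gamma}\vee\toF{\Theta}]$. For the converse direction, if $\Gamma\sqcup\Theta$ exists it is in particular a common upper bound, hence of the form $i[C_0]$ with $\toF{\Gamma}\le C_0$ and $\toF{\Theta}\le C_0$, and since any formula $C$ with $\toF{\Gamma}\le C$ and $\toF{\Theta}\le C$ gives a common upper bound $i[C]$ of $\Gamma$ and $\Theta$, minimality of $i[C_0]$ forces $C_0=\toF{\Gamma}\vee\toF{\Theta}$. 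I expect the only real content to lie in the middle paragraph --- the trick of projecting $\Ctx_\Omega$ onto the composition lattice $\CompC{\Omega}$ and observing that a top join upstairs forces every common upper bound downstairs to be a single formula --- while everything else is soundness and completeness repackaged as the adjoint triple; degenerate cases such as an empty frontier are excluded by the standing convention that the contexts in question are non-empty.
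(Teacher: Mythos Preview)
Your proof is correct and follows essentially the same route as the paper's: show that any common upper bound $\Delta$ of $\Gamma$ and $\Theta$ must be a singleton $i[C]$ by the relative-primeness hypothesis, then use the adjunction $\toFsym \dashv i$ to conclude. The paper leaves the ``$\Delta$ is necessarily a singleton'' step as a one-line remark, whereas you spell it out via the monotone projection to $\CompC{\Omega}$ and the observation that the one-block composition is the top element there; but this is exactly the intended justification, and your added converse-existence paragraph is a harmless bonus.
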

\begin{proof}
We have to show that $\Gamma \le \Delta$ and $\Theta\le \Delta$ if and only if 
$i[\toF{\Gamma}\vee \toF{\Theta}] \le \Delta$.
Again the backward direction is immediate.
In the forward direction, if both $\Gamma \le \Delta$ and $\Theta \le \Delta$, then $\Delta$ is necessarily a singleton $\Delta = i[C]$, by assumption that $\alpha_\Gamma$ and $\alpha_\Theta$ are relatively prime.
But then $\toF{\Gamma} \le C$ and $\toF{\Theta} \le C$ from the adjunction $\phi \dashv i$, and hence 
$\toF{\Gamma}\vee \toF{\Theta} \le C$.
\end{proof}
\begin{proof}[Proof of Lemma~\ref{lem:join-context}.]
By repeated application of Lemmas~\ref{lem:join-pair} and \ref{lem:join-relprime}, relying on the properties of the 
compositions $\beta_\Gamma$ and $\beta_\Theta$ as the pushout of $\alpha_\Gamma$ and $\alpha_\Theta$.
\end{proof}

\begin{thm}
The posets $\Frm_\Omega$ and $\Ctx_\Omega$ are lattices, for all $\Omega$.
\end{thm}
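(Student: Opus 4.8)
The plan is to reduce everything to three facts: each of $\Frm_\Omega$ and $\Ctx_\Omega$ is finite, has a least element, and has all binary joins; a finite poset with these properties is automatically a lattice (its meets being recovered as $X\wedge Y = \bigvee\set{Z\mid Z\le X,\ Z\le Y}$, a join of a finite nonempty set). Finiteness is immediate: a frontier of length $n+1$ admits only $\Catalan{n}$ formulas, and only finitely many contexts, since a context over $\Omega$ amounts to a composition of $\Omega$ together with a binary tree over each block. The least elements are already in hand: $0_\Omega = \Omega$ is the bottom of $\Ctx_\Omega$ by Proposition~\ref{prop:bot-context} and \eqref{eqn:bot-context}, and $\bot_\Omega = \toF{\Omega}$ is the bottom of $\Frm_\Omega$ by \eqref{eqn:bot-formula}. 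So the whole theorem comes down to existence of binary joins, which I would prove by induction on $n$, establishing at each stage that $\Frm_\Omega$ and $\Ctx_\Omega$ have binary joins for every $\Omega$ of length $n+1$. The base case $n=0$ is trivial, both posets being singletons.

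For the inductive step, $\Omega$ has length $n+1\ge 2$, and we may assume $\Frm_{\Omega'}$ and $\Ctx_{\Omega'}$ are lattices for every shorter frontier $\Omega'$. First I would produce formula joins. Given $A,B$ over $\Omega$, by Lemma~\ref{lem:join-formula} it suffices to produce $\toC{A}\sqcup_\Omega\toC{B}$ in $\Ctx_\Omega$. Here is the crucial point: the first block of a maximal decomposition is always the singleton block consisting of the leftmost atom — immediate by induction from $\toC{p}=p$ and $\toC{C\mul D}=\toC{C},D$. Hence, since $\Omega$ has length $\ge 2$, both compositions $\alpha_{\toC{A}}$ and $\alpha_{\toC{B}}$ have a block boundary just after the first letter of $\Omega$; since the join of two compositions retains exactly the boundaries present in both (Proposition~\ref{prop:comp-pushout}, together with the description of $\CompC{\Omega}\cong 2^n$), their pushout $\alpha_{\toC{A}}\sqcup\alpha_{\toC{B}}$ still has that boundary, so it consists of $k\ge 2$ blocks, each a sub-frontier of length $<n+1$. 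The induction hypothesis then supplies every formula join $\toF{\Gamma_j}\vee\toF{\Theta_j}$ occurring on the right-hand side of \eqref{eqn:join-context}, so Lemma~\ref{lem:join-context} exhibits $\toC{A}\sqcup_\Omega\toC{B}$, and Lemma~\ref{lem:join-formula} then yields $A\vee_\Omega B$.

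Finally I would produce context joins. For arbitrary $\Gamma,\Theta$ over $\Omega$, Lemma~\ref{lem:join-context} reduces $\Gamma\sqcup_\Omega\Theta$ to the formula joins $\toF{\Gamma_j}\vee\toF{\Theta_j}$ over the pushout blocks $\Omega_j$, each of length at most $n+1$; for $\Omega_j$ of length $<n+1$ these exist by the induction hypothesis, and the only way to get a block of length exactly $n+1$ is $k=1$, i.e.\ the relatively prime case (Lemma~\ref{lem:join-relprime}), where the single join needed is $\toF{\Gamma}\vee\toF{\Theta}$ in $\Frm_\Omega$ — which is exactly what the previous step just produced. Hence $\Gamma\sqcup_\Omega\Theta$ exists, completing the induction. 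I expect the main obstacle to be precisely this apparent circularity between the two families at the same frontier: a formula join over $\Omega$ is computed through a context join over $\Omega$, which may in turn call back on a formula join over $\Omega$. The leftmost-leaf observation is what breaks the loop — it forces the context join underlying $A\vee_\Omega B$ to be non-trivial, so that Lemma~\ref{lem:join-context} there recurses only to strictly shorter frontiers; once $\Frm_\Omega$ is known to have binary joins, the residual full-length case for context joins is harmless. The remaining facts about $\CompC{\Omega}$ (binary join = intersection of boundary sets, realized by the pushout) are elementary and I would dispatch them quickly; one then also gets as a corollary that $\Tam{n}$ and $\TamC{n}$ are lattices, recovering Tamari's original result.
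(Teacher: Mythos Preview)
Your proposal is correct and follows essentially the same route as the paper: both arguments compute joins recursively via \eqref{eqn:join-formula} and \eqref{eqn:join-context}, with well-foundedness hinging on the observation that maximal decompositions $\toC{A}$ always begin with a single atom so that the pushout in the formula-join case has $k\ge 2$ blocks, and both recover meets from joins by finiteness. Your write-up is somewhat more explicit than the paper's in separating the inductive step into ``formula joins first, then context joins'' and in spelling out why the apparent circularity at the same frontier is harmless (the $k=1$ case for context joins only needs a formula join over $\Omega$, which has just been established), but this is an elaboration of the same argument rather than a different one.
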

\begin{proof}
We can use equations \eqref{eqn:join-formula}, \eqref{eqn:bot-formula}, \eqref{eqn:bot-context}, and \eqref{eqn:join-context} to recursively compute the join of any (finite) collection of formulas or contexts over the same frontier, with a base case of $p \vee p = p$ for atomic formulas.
This recursion is well-founded because the maximal decomposition of a formula is irreducible, and irreducible contexts over the same frontier always share an atom as their first formula (i.e., we have that $k \ge 2$ for the number of parts in \eqref{eqn:join-context}, whenever $\Gamma^\irr = \toC{A}$ and $\Delta^\irr = \toC{B}$ are the maximal decompositions of non-atomic formulas).
Finally, by a standard construction, existence of joins also implies the existence meets, since $\Frm_\Omega$ and $\Ctx_\Omega$ are finite.
(Alternatively, meets may be computed directly by dualizing the whole construction, starting from the sequent calculus for the dual Tamari order described in the second paragraph of Section~\ref{sec:seqcal:notes}.)
\end{proof}

\begin{exa}
Let $A = p\mul((q\mul(r\mul((s\mul t)\mul u)))\mul v)$ and $B = (p\mul (q\mul r))\mul ((s\mul t)\mul (u\mul v))$
be two fully-bracketed words.
We illustrate the recursive computation of $A \vee B$:
$$
\small
\begin{array}{c | c|c|c|c|c}
\text{round} & A & \toC{A} & B & \toC{B} & \toC{A} \sqcup \toC{B} \\
\hline
1 & p((q(r((st)u)))v) & p,(q(r((st)u)))v & (p(qr))((st)(uv)) & p,qr,(st)(uv) & p,A_2 \vee B_2\\ 
\hline
2 & (q(r((st)u)))v & q,r((st)u),v & (qr)((st)(uv)) & q,r,(st)(uv) & q,A_3 \vee B_3 \\
\hline
3 & (r((st)u))v & r, (st)u, v & r((st)(uv)) & r, (st)(uv) & r,A_4 \vee B_4 \\
\hline
4 & ((st)u)v & s,t,u,v & (st)(uv) & s,t,uv & s,t,A_5 \vee B_5 \\
\hline
5 & uv & u,v & uv & u,v & u,v
\end{array}
$$
We conclude that $A \vee B = (p\mul (q\mul (r\mul ((s\mul t)\mul (u\mul v)))))$.
\end{exa}

\section{Counting intervals in Tamari lattices}
\label{sec:enumeration}

In this section we explain how the coherence theorem can be used to derive Chapoton's result (mentioned in Section~\ref{sec:intro:chapo}) that the number of intervals in $\Tam{n}$ is given by Tutte's formula \eqref{tutte-formula1}.
We assume some basic familiarity with generating functions \cite{WilfGF}.

To ``count intervals'' means to compute the cardinality of the set
$$
\Int{n} = \set{(A,B) \in \Tam{n} \times \Tam{n} \mid A \le B}
$$
as a function of $n$.
By the coherence theorem, the problem of counting intervals can be reduced to the problem of \emph{counting focused derivations,} and since the latter are just special kinds of trees, this problem lends itself readily to being solved using generating functions.

Consider the formal power series $L(z,x)$ and $R(z,x)$ defined by
$$L(z,x) = \sum_{n,k} \ell_{n,k} z^n x^k \quad \text{and}\quad R(z,x) = \sum_{n,k} r_{n,k} z^n x^k$$
where $\ell_{n,k}$ (respectively, $r_{n,k}$) is the number of focused derivations of sequents whose right-hand side is a formula of size $n$ (over some arbitrary frontier) and whose left-hand side is a context (respectively, irreducible context) of length $k$.
We write $L_1(z)$ to denote the coefficient of $x^1$ in $L(z,x)$.
\begin{prop}
$L_1(z)$ is the ordinary generating function counting intervals in $\Tam{n}$.
\end{prop}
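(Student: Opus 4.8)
The plan is to unwind the definition of $L_1(z)$ and match it, coefficient by coefficient, against $|\Int{n}|$. By definition, the coefficient of $z^n x^1$ in $L(z,x)$ counts focused derivations whose right-hand side is a formula of size $n$ (over some arbitrary frontier) and whose left-hand side is a context of length exactly $1$ — that is, focused derivations of sequents of the form $\tseq{A}{B}$ where $\sz{B} = n$. First I would fix a canonical frontier: by the frontier invariance property (Proposition~\ref{prop:frontier-inv}), the number of focused derivations of $\tseq{A}{B}$ depends only on the underlying unlabelled binary-tree structure of $A$ and $B$, so I may as well count focused derivations of sequents $\tseq{A}{B}$ with $A,B \in \Tam{n}$ (both $A$ and $B$ have $n$ internal nodes, hence the same frontier of length $n+1$, which is forced anyway by frontier preservation once a focused derivation exists).

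Next I would invoke the coherence theorem (Theorem~\ref{thm:coherence}): every \emph{derivable} sequent has \emph{exactly one} focused derivation, and non-derivable sequents have none. Hence the number of focused derivations of sequents $\tseq{A}{B}$ with $A, B \in \Tam{n}$ equals the number of pairs $(A,B) \in \Tam{n}\times\Tam{n}$ such that $\tseq{A}{B}$ is derivable. By soundness and completeness (Theorems~\ref{thm:soundness} and~\ref{thm:completeness}, i.e.\ Claim~\ref{claim:soundcomplete}), $\tseq{A}{B}$ is derivable if and only if $A \le B$ in the Tamari order. Therefore this count is exactly $|\set{(A,B)\in\Tam{n}\times\Tam{n}\mid A\le B}| = |\Int{n}|$. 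Summing over $n$ and reading off the $x^1$-coefficient, we conclude $L_1(z) = \sum_n |\Int{n}|\, z^n$, which is precisely the assertion that $L_1(z)$ is the ordinary generating function counting intervals in $\Tam{n}$.

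The one point requiring a little care — and the only place there is anything to prove beyond bookkeeping — is the interplay between "over some arbitrary frontier" in the definition of $\ell_{n,k}$ and the fact that $\Tam{n}$ is defined as unlabelled trees. One must check that the count $\ell_{n,1}$ is well-defined (independent of which frontier is chosen) and that it agrees with the count over unlabelled trees; this is exactly what Proposition~\ref{prop:frontier-inv} delivers, via the relabelling action and the observation that a focused derivation of $\tseq{A}{B}$ forces $\frontier{A}=\frontier{B}$. Once that identification is in place, the rest is the chain of equalities above. I do not expect any genuine obstacle here; the proposition is essentially a repackaging of the coherence theorem together with soundness and completeness, and its role is to set up the generating-function computation that follows.
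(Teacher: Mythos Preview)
Your proposal is correct and follows essentially the same approach as the paper's own proof, which is a terse one-liner invoking the coherence theorem (Theorem~\ref{thm:coherence}) to identify $\ell_{n,1}$ with $|\Int{n}|$. Your version is more explicit about the role of frontier invariance in making the count well-defined over ``some arbitrary frontier,'' a point the paper leaves implicit.
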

\begin{proof}
The coefficients $\ell_{n,1}$ give the number of focused derivations of sequents of the form $\fseq{A}{B}$, where $\sz{A} = \sz{B} = n$, so $\ell_{n,1} = |\Int{n}|$
by Theorem~\ref{thm:coherence}.
\end{proof}
\begin{prop}\label{prop:genfunc}
$L$ and $R$ satisfy the equations:
\begin{align}
L(z,x) &= x^{-1}(L(z,x) - x L_1(z)) + R(z,x) \label{eqn:L}\\
R(z,x) &= z R(z,x)L(z,x) + x\label{eqn:R}
\end{align}
\end{prop}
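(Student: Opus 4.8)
The plan is to read off both identities from a case analysis on the root rule of a focused derivation, using Proposition~\ref{prop:equivfocus} --- which tells us that a focused derivation is built only from $\mulrule L$, $\mulrule R^\foc$, and $id^\atm$ --- together with the trichotomy of Proposition~\ref{prop:seqclass}. The organizing observation is that the root rule of a focused derivation is forced by the class of its conclusion: a left-inverting sequent can only be the conclusion of $\mulrule L$, a right-focusing sequent can only be the conclusion of $\mulrule R^\foc$, and an atomic sequent can only be the conclusion of $id^\atm$. I will also use frontier preservation (Proposition~\ref{prop:frontier}) in the mild form that $\tseq{\cdot}{A}$ is never derivable, so that the coefficient of $x^0$ in both $L$ and $R$ vanishes.

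For equation~\eqref{eqn:L}, I would partition the focused derivations enumerated by $L(z,x)$ according to whether their conclusion is left-inverting, right-focusing, or atomic. A right-focusing or atomic conclusion has an irreducible left-hand side, so these derivations are precisely the ones enumerated by $R(z,x)$. A left-inverting conclusion has the form $\tseq{C_1\mul C_2,\Gamma'}{A}$, and since a focused derivation of such a sequent must end in $\mulrule L$ applied to a focused derivation of $\tseq{C_1,C_2,\Gamma'}{A}$, this sets up a bijection between the focused derivations with a left-inverting conclusion and the focused derivations whose conclusion has the same right-hand side but a context lengthened by one formula. Passing to generating functions, the left-inverting contribution is $\sum_{n,\,k\ge 1}\ell_{n,k+1}z^nx^k$, which after reindexing equals $x^{-1}\bigl(L(z,x)-xL_1(z)\bigr)$ --- subtracting $xL_1(z)$ removes the $x^1$ term (the $x^0$ term already being $0$) before the division by $x$. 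Adding the two contributions yields $L(z,x)=x^{-1}\bigl(L(z,x)-xL_1(z)\bigr)+R(z,x)$.

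For equation~\eqref{eqn:R}, I would split the focused derivations enumerated by $R(z,x)$ --- those with an irreducible left-hand side --- according to whether the right-hand side is atomic. If it is, the derivation must be $id^\atm$ with conclusion $\tseq{p}{p}$, contributing $z^0x^1=x$ (a size-$0$ right-hand side, a length-$1$ context). If it is not, the conclusion is right-focusing, hence the conclusion of a $\mulrule R^\foc$ step, which presents the derivation as a focused derivation of $\tseq{\Gamma^\irr_1}{A_1}$ (irreducible left-hand side, counted by $R$) together with a focused derivation of $\tseq{\Delta}{A_2}$ (arbitrary left-hand side, counted by $L$); conversely any such pair recombines, since $\Gamma^\irr_1,\Delta$ is still irreducible and $A_1\mul A_2$ is non-atomic, so the recombined tree still obeys the focusing discipline. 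Under this correspondence the two context lengths add, while $\sz{A_1\mul A_2}=1+\sz{A_1}+\sz{A_2}$ produces the extra factor of $z$; hence the right-focusing derivations contribute $zR(z,x)L(z,x)$, and we get $R(z,x)=zR(z,x)L(z,x)+x$.

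The remaining work is bookkeeping: reindexing the sums, checking that empty contexts never intervene, and observing that the additivity of context lengths and the near-additivity of formula sizes under $\mulrule R^\foc$ are exactly encoded by the Cauchy product $R(z,x)L(z,x)$. The one place where I would be careful is the $\mulrule R^\foc$ step: one must confirm that it really is a bijection on focused derivations rather than merely a surjection --- that is, that a focused derivation with right-focusing conclusion both determines and is determined by its pair of immediate sub-derivations, and that recombining any admissible pair yields a tree that still satisfies Proposition~\ref{prop:equivfocus}. This is routine, but it is the crux of why the functional equations take exactly this shape.
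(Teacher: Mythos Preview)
Your proposal is correct and follows essentially the same approach as the paper: both argue by case analysis on the root rule of a focused derivation, with the $\mulrule L$ case yielding the $x^{-1}(L(z,x)-xL_1(z))$ summand and the irreducible-context cases yielding $R(z,x)$ for \eqref{eqn:L}, and with $\mulrule R^\foc$ and $id^\atm$ yielding the two summands of \eqref{eqn:R}. Your version is simply more explicit---invoking Propositions~\ref{prop:seqclass} and \ref{prop:equivfocus} to justify that the root rule is forced, and spelling out the reindexing and the bijectivity check for $\mulrule R^\foc$---whereas the paper states the same decomposition more tersely.
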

\begin{proof}
The equations are derived directly from the inductive structure of focused derivations:
\begin{itemize}
\item The first summand in \eqref{eqn:L} corresponds to the contribution from the $\mulrule L$ rule, which transforms any $\fseq{A,B,\Gamma}{C}$ into $\fseq{A\mul B,\Gamma}{C}$.
The context in the premise must have length $\ge 2$ which is why we subtract the $xL_1(z)$ factor, and the context in the conclusion is one formula shorter which is why we divide by $x$.
The second summand is the contribution from irreducible contexts.
\item
The first summand in \eqref{eqn:R} corresponds to the contribution from the $\mulrule R^\foc$ rule, which transforms $\fseq{\Gamma^\irr}{A}$ and $\fseq{\Delta}{B}$ into $\fseq{\Gamma^\irr,\Delta}{A\mul B}$: the length of the context in the conclusion is the sum of the lengths of $\Gamma^\irr$ and $\Delta$, while the size of $A\mul B$ is one plus the sum of the sizes of $A$ and $B$, which is why we multiply $R$ and $L$ together and then by an extra factor of $z$.
The second summand is the contribution from $id^\atm : \fseq{p}{p}$.
\qedhere
\end{itemize}
\end{proof}
\begin{prop}
\label{prop:L1R1}
$L_1(z) = R(z,1)$.
\end{prop}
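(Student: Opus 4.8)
The plan is to read the identity straight off the functional equation~\eqref{eqn:L}, so that nothing new is needed beyond Proposition~\ref{prop:genfunc}. First I would check that evaluating at $x = 1$ is legitimate: a context whose frontier has length $n+1$ has length between $1$ and $n+1$ (it is nonempty since its frontier is, by frontier preservation, Proposition~\ref{prop:frontier}), so $\ell_{n,k} = r_{n,k} = 0$ whenever $k = 0$ or $k > n+1$. Hence, for each fixed power of $z$, both $L(z,x)$ and $R(z,x)$ are polynomials in $x$, their evaluations $L(z,1)$ and $R(z,1)$ are well-defined power series in $z$, and $x^{-1}\bigl(L(z,x) - x L_1(z)\bigr) = \sum_{n,k\ge 2}\ell_{n,k}z^n x^{k-1}$ evaluates at $x = 1$ to $L(z,1) - L_1(z)$.

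The main step is then a one-line substitution: putting $x = 1$ in~\eqref{eqn:L} gives $L(z,1) = \bigl(L(z,1) - L_1(z)\bigr) + R(z,1)$, and cancelling $L(z,1)$ yields $L_1(z) = R(z,1)$.

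If one prefers a combinatorial reading of the same fact, I would instead argue bijectively. By Proposition~\ref{prop:mudecompmu} the operations $\toF{-}$ and $\toC{-}$ are mutually inverse bijections between formulas and irreducible contexts, and counting atoms shows that $\sz{\toF{\Gamma^\irr}} = n$ precisely when $\frontier{\Gamma^\irr}$ has length $n+1$. Combining Proposition~\ref{prop:decompinv} (taken with empty right context) with the coherence theorem (Theorem~\ref{thm:coherence}), the assignment sending a focused derivation of $\fseq{\Gamma^\irr}{B}$ to the unique focused derivation of $\fseq{\toF{\Gamma^\irr}}{B}$, with inverse given by $\toC{-}$, is a size-preserving bijection between focused derivations with irreducible left-hand side and focused derivations of the form $\fseq{A}{B}$ with $\sz{A} = \sz{B}$. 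This says exactly that $\sum_k r_{n,k} = \ell_{n,1}$ for every $n$, i.e. $R(z,1) = L_1(z)$. I do not expect any genuine obstacle here: the first route is bookkeeping plus a cancellation, and the only slightly fiddly point in the second route is the atom count giving $\sz{\toF{\Gamma^\irr}} = n$.
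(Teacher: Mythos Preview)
Your proposal is correct and matches the paper's own proof essentially verbatim: the paper likewise notes that the identity follows algebraically from~\eqref{eqn:L}, and then gives the same constructive reading via the bijection between irreducible contexts and formulas furnished by maximal decompositions (Propositions~\ref{prop:decompinv}--\ref{prop:mudecompmu}). You have simply spelled out both arguments in more detail than the paper does.
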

\begin{proof}
This follows algebraically from \eqref{eqn:L}, but we can interpret it constructively as well.
The coefficient of $z^n$ in $R(z,1)$ is the formal sum $\sum_k r_{n,k}$, giving the number of focused derivations of sequents whose right-hand side is a formula of size $n$ and whose left-hand side is an irreducible context of arbitrary length.
But by the existence of maximal decompositions (Props.~\ref{prop:decompinv}--\ref{prop:mudecompmu}), we know that there is a 1-to-1 correspondence between derivable sequents of the form $\tseq{\Gamma^\irr}{B}$ and ones of the form $\tseq{A}{B}$.
\end{proof}
\noindent
After substituting $L_1(z) = R(z,1)$ into \eqref{eqn:L} and applying a bit of algebra, we obtain another formula for $L$ in terms of a ``discrete difference operator'' acting on $R$:
\begin{equation}
L(z,x) = x\frac{R(z,x) - R(z,1)}{x-1} \label{eqn:L'}
\end{equation}
The recursive (or ``functional'') equations \eqref{eqn:R} and \eqref{eqn:L'} can be easily unrolled using computer algebra software to compute the first few dozen coefficients of $R$ and $L$:
\small
\begin{align*}
%\scriptstyle
 R(z,x) &%\scriptstyle
 = x + x^2z + (x^2+2x^3)z^2 + (3x^2 + 5x^3 + 5x^4)z^3 + (13x^2 + 20x^3 + 21x^4 + 14x^5)z^4 + \dots\\
%&\scriptstyle  (68x^2 + 100x^3 + 105x^4 + 84x^5 + 42x^6)z^5 + \dots  \\
%\scriptstyle L(z,x) &\scriptstyle = x + (x+x^2)z + (3x + 3x^2 + 2x^3)z^2 + (13x + 13x^2 + 10x^3 + 5x^4)z^3 + \dots\\
%\scriptstyle 
L_1(z) &%\scriptstyle
 = R(z,1) = 1 + z + 3z^2 + 13z^3 + 68z^4 + 399z^5 + 2530z^6 + 16965 z^7 + \dots
\end{align*}
\normalsize
%% \begin{align*}
%% R_0(x) &= x \\
%% R_1(x) &= x^2 \\
%% R_2(x) &= x^2 + 2x^3 \\
%% R_3(x) &= 3x^2 + 5x^3 + 5x^4 \\
%% R_4(x) &= 13x^2 + 20x^3 + 21x^4 + 14x^5
%% \end{align*}
%% $$
%% R(z,x) = x + x^2z + (x^2 + 2x^3)z^2 + (3x^2 + 5x^3 + 5x^4)z^3 + \dots % (13x^2 + 20x^3 + 21x^4 + 14x^5)x^4 + \dots
%% $$
\begin{thm}[Chapoton \cite{Chapoton2006}]
\label{thm:chapo}
$|\Int{n}| = \frac{2(4n+1)!}{(n+1)!(3n+2)!}$.
\end{thm}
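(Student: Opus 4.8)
The plan is to extract a single algebraic equation satisfied by $L_1(z)$ from the functional equations~\eqref{eqn:R} and~\eqref{eqn:L'}, and then to verify that Tutte's series $\sum_n \frac{2(4n+1)!}{(n+1)!(3n+2)!} z^n$ satisfies that same equation (with the same initial coefficient $1$), so that the two power series coincide. First I would eliminate $L(z,x)$ between the two equations. Substituting~\eqref{eqn:L'} into~\eqref{eqn:R} gives
\[
R(z,x) = z\,R(z,x)\cdot x\,\frac{R(z,x)-R(z,1)}{x-1} + x,
\]
a single equation relating $R(z,x)$ and $R(z,1) = L_1(z)$. This is a polynomial equation in $R(z,x)$ with coefficients involving $x$, $z$, and the unknown $L_1(z)$ — exactly the shape of equation to which the \emph{kernel method} (or quadratic method) applies. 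The kernel is obtained by viewing the above as a quadratic in $R = R(z,x)$; writing it as $zx\,R^2 - (x-1)R - zx\,R(z,1)\,R + x(x-1) = 0$ and collecting terms, one gets $zx\,R^2 - \big((x-1)+zx L_1(z)\big)R + x(x-1) = 0$. The standard trick is to solve for $R$ by the quadratic formula, choose the root that is a well-defined power series at $x=0$, and then impose that the discriminant have a double root at the branch point $x = X(z)$ where the two roots collide; the condition that the numerator also vanish there yields one extra scalar equation in $L_1(z)$ and $X(z)$, and combined with the vanishing-discriminant equation this pins down $L_1(z)$ algebraically.

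The main obstacle I expect is purely computational bookkeeping: carrying out the quadratic method cleanly, parametrizing the branch point, and massaging the resulting system into a recognizable closed form. Once an algebraic (or differential) equation for $L_1(z)$ is in hand, the endgame is routine. Tutte's numbers $a_n = \frac{2(4n+1)!}{(n+1)!(3n+2)!}$ are classical, and the generating function $T(z) = \sum_n a_n z^n$ is well known (it appears in OEIS entry \href{https://oeis.org/A000260}{A000260}) to admit a rational parametrization: setting $z = t(1-t)$-type substitutions, or more precisely writing $T$ in terms of an auxiliary series $w$ with $w = z(1+w)^4$ up to the standard reindexing, one obtains an explicit algebraic equation for $T(z)$. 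I would then simply check that $T(z)$ satisfies the very equation derived for $L_1(z)$ in the previous step, and that $T(0) = 1 = L_1(0)$; since the functional equations~\eqref{eqn:R}–\eqref{eqn:L'} determine all coefficients of $R$ (hence of $L_1$) uniquely by a straightforward induction on $n$, this forces $L_1 = T$ as formal power series, which is the claim. As a sanity check one verifies the first several coefficients $1, 1, 3, 13, 68, 399, \dots$ already displayed after~\eqref{eqn:L'} against $a_n$.

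An alternative route, should the kernel-method computation prove unwieldy, is to introduce a catalytic-variable-free substitution directly: guess a rational parametrization $z = f(s)$, $R(z,1) = g(s)$, $R(z,x)\big|_{x = h(s)} = \dots$ modeled on the known parametrization of A000260, plug into the eliminated equation, and confirm it holds identically in $s$. This converts the problem into verifying a polynomial identity in one variable, which is mechanical. Either way, the conceptual content — reducing interval-counting to counting focused derivations — is already supplied by Theorem~\ref{thm:coherence} and Proposition~\ref{prop:genfunc}; what remains for Theorem~\ref{thm:chapo} is the extraction of the closed form from the functional equations, and I would present that extraction via the quadratic method as the cleanest self-contained argument.
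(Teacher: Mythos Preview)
Your proposal is correct and follows essentially the same route as the paper: after deriving the single catalytic equation $zx\,R^2 - \big((x-1)+zx L_1(z)\big)R + x(x-1) = 0$, the paper simply appeals to Cori and Schaeffer~\cite{CoSch2003}, who solve exactly this equation (the case $a=b=1$ of their ``description trees'') by Brown and Tutte's quadratic method followed by Lagrange inversion. The only difference is that you propose to carry out the quadratic-method computation yourself and then match against the known algebraic characterization of $T(z)$, whereas the paper outsources that step entirely to the literature.
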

\begin{proof}
At this point we simply appeal to a paper of Cori and Schaeffer \cite{CoSch2003}, because equations \eqref{eqn:R} and \eqref{eqn:L'} are a special case of the functional equations given there for the generating functions of \emph{description trees of type $(a,b)$}, where $a = b = 1$.
Cori and Schaeffer explain (see \cite[pp.~171--174]{CoSch2003})
how to solve these equations abstractly for $R(z,1)$ using Brown and Tutte's ``quadratic method'', and then how to derive the explicit formula above in the specific case that $a = b = 1$ via Lagrange inversion (essentially as the formula was originally derived by Tutte for planar triangulations).
\end{proof}
\noindent
Let's take a moment to discuss Chapoton's original proof of Theorem~\ref{thm:chapo}, which despite being obtained in a different way, we should emphasize has many commonalities with the one given here via the coherence theorem.
Chapoton likewise defines a two-variable generating function $\Phi(z,x)$ enumerating intervals in the Tamari lattices $\Tam{n}$, where the parameter $z$ keeps track of $n$, and the parameter $x$ keeps track of the \emph{number of segments along the left border} of the tree at the lower end of the interval.\footnote{Or rather at the upper end, since Chapoton uses the dual ordering convention (cf.~Footnote~\ref{foot1}).}
By an extended combinatorial analysis, Chapoton derives the following equation for $\Phi$:
\begin{equation}
\Phi(z,x) = x^2 z(1+\Phi(z,x)/x)\left(1 + \frac{\Phi(z,x) - \Phi(z,1)}{x-1}\right) \label{eqn:chapofun}
\end{equation}
He manipulates this equation and eventually solves for $\Phi(z,1)$ as the root of a certain polynomial, from which he derives Tutte's formula \eqref{tutte-formula1}, again by appeal to a different result in the paper by Cori and Schaeffer \cite{CoSch2003}.

If we give a bit of thought to these definitions, it is easy to see that the number of segments along the left border of a tree $A$ is equal to the number of formulas in its maximal decomposition $\toC{A}$ -- meaning that the generating function $\Phi(z,x)$ apparently contains exactly the same information as $R(z,x)$!
There is a small technicality, however, due to the fact that Chapoton only considers the $\Tam{n}$ for $n\ge 1$.
In fact, the two generating functions are related by a small offset (corresponding to the coefficient of $z^0$ in $R(z,x)$):
$
\Phi(z,x) = R(z,x) - x 
$.
Indeed, it can be readily verified that equation \eqref{eqn:chapofun} follows from \eqref{eqn:R} and \eqref{eqn:L'}, applying the substitution $R(z,x) = x + \Phi(z,x)$.

\section*{Acknowledgment}

I thank Wenjie Fang, Paul-André Melliès, Jason Reed, Gabriel Scherer, and Tarmo Uustalu for interesting conversations about this work, as well as the anonymous FSCD and LMCS reviewers for some helpful comments.
The visualization of $\Tam{4}$ in Figure~\ref{fig:tam3tam4} was created using the help of Brent Yorgey's Diagrams library for Haskell. % (\url{http://projects.haskell.org/diagrams/}).

\bibliographystyle{plainurl}
\bibliography{tamari-lmcs}

\onecolumn

\appendix
\section{A missed connection}
\label{sec:postscript}

Tamari obtained his doctorate in 1951 from the University of Paris.\footnote{A fascinating account of Tamari's eventful life and scientific contributions appears in the first chapter of \cite{TamariFestschrift}, see ``Dov Tamari (formerly Bernhard Teitler)'' by Folkert M\"uller-Hoissen and Hans-Otto Walther.
That article is also the source from which I learned of Tamari's missed connection with Lambek (see Footnote 45 on page 18).}
His thesis \cite{Tamari1951phd} revisited the important problem in algebra (first solved by Malcev) of characterizing the conditions under which a semigroup is embeddable into a group, which Tamari showed can be reduced to the more elementary problem of determining when a partial binary operation\footnote{Somewhat disorientingly for modern readers such as the present author, in his thesis and early work Tamari referred to the algebraic structures consisting of sets equipped with a partial binary operation as ``monoids''. (In later work \cite{Tamari1982graphic} he referred to them as ``bins''.)} is fully associative (that is, when all possible bracketings of a word have the same value).
(The thesis includes a description of the partial orders $\Tam{n}$ with the claim that they are lattices, as well as graphical depictions of $\Tam{3}$ and $\Tam{4}$ as convex polytopes, although none of this made it into the abridged version published in BSMF \cite{Tamari1951phd}.)

Coincidentally, the embedding problem for semigroups was also studied around the same time by Lambek, who arrived at some related ideas \cite{Lambek1951}.
Well, it turns out that the two almost began a collaboration!
As Tamari recounts in \cite[p.~6]{Tamari1962/63}:\footnote{``In 1951, after his thesis \cite{Tamari1951phd} and after the publication of \cite{Lambek1951}, the author proposed to Lambek joint work, to highlight the preponderant role played by general associativity. Unfortunately, due to external circumstances, this work was never written.'' (Own translation, citation numbers realigned with present bibliography.)}
\begin{quote}
En 1951, après sa thèse \cite{Tamari1951phd} et après la publication de \cite{Lambek1951},
l'auteur a proposé à LAMBEK un travail commun, pour mettre en évidence le rôle prépondérant joué par l'associativité générale.
Malheureusement, par suite de circonstances extérieures, ce travail n'a jamais été écrit.
\end{quote}
Disappointingly, Tamari does not elaborate any further.
But one is certainly lead to fantasize about what were the ``circonstances extérieures'' which prevented Lambek and Tamari from pursuing this collaboration, and what might have been the outcome if they had.
It seems quite likely that between Lambek's introduction of the associative and non-associative syntactic calculi, and Tamari's continued study of the lattices induced by a semi-associative law, the pair would have eventually noticed the simple sequent calculus which we have studied here.
What else might they have noticed?

\end{document}